\documentclass[12pt]{article}
\usepackage{amsmath,amsfonts,amssymb,amsthm,bbm,tikz-cd,
  etoolbox,xparse,hyperref,lineno,subcaption,rotating}
\usepackage[linesnumbered,ruled,vlined]{algorithm2e}
\usepackage[active]{srcltx}
\usepackage[utf8]{inputenc}

\DeclareMathOperator{\argmin}{argmin}
\DeclareMathOperator{\dist}{d}
\DeclareMathOperator{\rd}{rd}
\DeclareMathOperator{\bd}{bd}
\DeclareMathOperator{\tr}{Tr}

\tikzset{
  @go line/.code args={#1#2#3!}{
    \edef\myrest{#3}
    \edef\myletter{#2}
    \begin{pgfonlayer}{background layer}
    \draw (#1,-\mycount) +(0,-.5) -- +(0,.5) +(.5,0) -- +(-.5,0);
    \end{pgfonlayer}    
    \ifdefstring{\myletter}{1}{
      \path[draw,fill=white] (#1,-\mycount) circle (.4);
    }{
      \ifdefstring{\myletter}{0}{
        \path[draw,fill=black] (#1,-\mycount) circle (.4);
      }{
        \ifdefstring{\myletter}{B}{
                \path[draw,fill=black] (#1,-\mycount) circle (.4);
          \draw [white,fill=black] (#1-1,-\mycount-1) rectangle (#1+1,-\mycount+1);
        }{
          \ifdefstring{\myletter}{b}{
            \path[draw,fill=black] (#1,-\mycount) circle (.4);
            \draw [white,fill=black]
              (#1-0.5,-\mycount-0.5) rectangle (#1+0.5,-\mycount+0.5);
          }{}
        }
      }
    }
    \ifdefstring{\myrest}{}{}{
      \pgfmathsetmacro\mynext{int(#1+1)}
      \pgfkeysalso{@go line={\mynext}#3!}
    }
  },
  go line/.style={@go line={0}#1!},
}

\NewDocumentCommand\showgoboard{m}{
  \begin{tikzpicture}[x=\mygounit,y=\mygounit]
    \pgfdeclarelayer{background layer}
    \pgfsetlayers{background layer,main}

    \edef\myconf{#1}
    \foreach [count=\mycount] \myline in \myconf {
      \tikzset{go line/.expanded=\myline}
    }
  \end{tikzpicture}
}

\def\mygounit{2.5mm}

%\linenumbers

%\renewcommand{\showgoboard}[1]{}

\begin{document}
\newtheorem{abstractclass}{abstractclass}
\newtheorem{definition}[abstractclass]{Definition}
\newtheorem{lemma}[abstractclass]{Lemma}
\newtheorem{proposition}[abstractclass]{Proposition}
\newtheorem{theorem}[abstractclass]{Theorem}
\newtheorem{example}[abstractclass]{Example}

\newcommand{\R}{\mathbbm{R}}
\newcommand{\N}{\mathbbm{N}}
\newcommand{\Z}{\mathbbm{Z}}

\title{Restriction and interpolation operators for digital images and their
boundaries}
\author{Janosch Rieger%
\footnote{janosch.rieger@monash.edu, Monash University, Australia (corresponding author)} 
\and 
Kyria Wawryk%
\footnote{kyria.wawryk@monash.edu, Monash University, Australia}}

\maketitle

\begin{abstract}
The aim of this paper is to provide a coherent framework for
transforming boundary pairs of digital images from one resolution to another
without knowledge of the full images.
It is intended to facilitate the simultaneous usage of multiresolution
processing and boundary reduction, primarily for algorithms 
in computational dynamics and computational control theory.
\end{abstract}

\noindent\textbf{Keywords:} digital images, multiresolution processing,
boundary reduction, computational dynamics, computational control theory

\medskip 

\noindent\textbf{Mathematics Subject Classification:} 68U05, 37M22, 93B03, 65D18

\section{Introduction}

Digital images are not only %objects 
of interest in digital geometry \cite{Klette}.
They also occur, often under the name \emph{box covering} \cite{complicated} 
or \emph{grid covering} \cite{Mischaikow:0}, in completely unrelated disciplines 
like computational dynamics, computational control theory and rigorous computing.
The data structure underlying the software package GAIO for the computation 
of invariant objects \cite{Dellnitz,Rieger:GAIO},
the viability kernel algorithm \cite{Saint-Pierre},
and algorithms for the computation of reachable sets \cite{Rieger}, 
is indeed a digital image.

\medskip

Multiresolution processing and boundary reduction
are techniques for speeding up algorithms based on digital image 
representations:
Algorithms derived from GAIO, in the field of computational dynamics
\cite{Mischaikow:1,Junge,Mischaikow:2,Rasmussen}
and beyond \cite{Hestermeyer,Sertl},
as well as many algorithms in rigorous computing \cite{Jaulin},
all start with a very crude box covering of the region of interest, 
and alternate between an elimination step in which boxes are deleted 
from the cover, and a refinement step in which the remaining boxes are subdivided.
This approach allows for large gains 
at low cost in the early stages of the algorithm.
On the other hand, the boundary tracking algorithm \cite{Rieger} reduces the complexity 
of reachable set computation by storing and manipulating a \emph{boundary pair}, 
i.e.\ the boundary of a digital image together with one adjacent layer in the complement, 
instead of working with the full digital image.

\medskip

The aim of this paper is to provide a coherent framework for
transforming the boundary pairs of digital images from one resolution to another
without knowledge of the full image.
It is intended to facilitate the simultaneous usage of multiresolution
processing and boundary reduction.
%This framework allows algorithms to store and manipulate boundary pairs
%at different resolutions, without having to work with the entire digital image.
We contribute mainly the following insights:
\begin{itemize}
\item [i)] We explore the structure of digital images in terms of inner and outer layers. 
This is related to the so-called distance transform \cite[Section 3.4.2]{Klette},
but we are interested in the geometry of the layers rather than their computation,
which we aim to avoid.
\item [ii)] We characterize the space of all boundary pairs of digital images.
\item [iii)] We identify a restriction operator $R$ that projects a digital image
living on a fine grid onto a coarser grid, %by digitization, 
and an interpolation operator $I$ that refines digital images, % by subdivision,
such that both operators satisfy a number of desirable properties, individually and as a pair.
%(The operator $R$ is referred to as \emph{outer Jordan digitization} in digital
%geometry \cite[Definition 2.8]{Klette}, and the operator $I$ coincides under 
%certain circumstances with the subdivision operation from \cite{Dellnitz}.)
\item [iv)] We lift the operators $R$ and $I$ to mappings $\partial R$ and $\partial I$
between the spaces of boundary pairs corresponding to the coarse and fine grids 
from iii), and provide implementations of
these lifts that neither require knowledge of the full images nor compute this information
implicitly.
\end{itemize}
A more detailed outline of the paper is postponed to Section \ref{sec:outline},
when the definitions and the terminology have been fixed.

\section{Definitions and outline}

We collect most definitions from this paper in Section \ref{sec:definitions}.
They serve as the vocabulary for presenting a technical outline %of the paper 
in Section \ref{sec:outline}.

\subsection{Definitions}\label{sec:definitions}

We denote $\N_0=\{0,1,2,\ldots\}$, $\N_1=\{1,2,3,\ldots\}$ and $\N_2=\{2,3,4,\ldots\}$.
When $J\subset\R$ is an interval, and when it is clear that $k\in\N$ or $k\in\Z$, 
we will write $k\in J$ instead of $k\in J\cap\N$ or $k\in J\cap\Z$.
For every $x\in\R$, we denote
\[
\lfloor x\rfloor:=\max\{k\in\Z:k\le x\}
\quad\text{and}\quad
\lceil x\rceil:=\min\{k\in\Z:x\le k\}.
\]

Let $m\in\N_1$.
We equip $\R^m$ with the norm $\|x\|_\infty:=\max_{j\in\{i,\ldots,m\}}|x_j|$.
For any $\emptyset\neq M,M'\subset\R^m$ 
we consider the Hausdorff semi-distance and the full Hausdorff distance
\begin{align*}
\dist(M,M')&:=\sup_{x\in M}\inf_{x'\in M'}\|x-x'\|_\infty,\\
\dist_H(M,M')&:=\max\{\dist(M,M'),\dist(M',M)\},
\end{align*}
with the additional convention that $\dist(M,\emptyset)=\infty$ 
for all $\emptyset\neq M\subset\R^m$.
For all $M\subset\R^m$ and $\delta>0$, we will denote
$B_\delta(M):=\{x\in\R^m:\dist(x,M)\le\delta\}$.

\medskip

We introduce grids, %collections of 
their subsets, and boundary layers of these sets.
Definitions \eqref{def:partial:1} through \eqref{def:partial:4} are as in \cite{Rieger}.
For an illustration see Figure \ref{fig:anatomy}.

\begin{definition}\label{eq:bdrylayers}
For every $\rho>0$, consider the grid
\[\Delta_\rho:=\rho\Z^m\]
as well as the collections
\[S_\rho^-:=\{M\subset\Delta_\rho: \emptyset\neq M\neq\Delta_\rho\},\quad
%\quad\text{and}\quad
S_\rho:=\{M\subset\Delta_\rho: M\neq\emptyset\},\quad
S_\rho^+:=2^{\Delta_\rho}.\]
For every $M\in S_\rho^+$ we define the (possibly empty) sets
\begin{align}
\partial_\rho^0M
&:=\{x\in M:\exists\,z\in M^c\cap\Delta_\rho\ \text{with}\ \|x-z\|_\infty=\rho\},
\label{def:partial:1}\\
\partial_\rho^kM&:=\{z\in M^c\cap\Delta_\rho:
\dist(z,\partial_\rho^0M)=k\rho\},\quad k\in\N_1,
\label{def:partial:3}\\
\partial_\rho^{-k}M&:=\{x\in M:
\dist(x,\partial_\rho^0M)=k\rho\},\quad k\in\N_1.
\label{def:partial:4}
\end{align}
\end{definition}

\begin{figure*}[t]
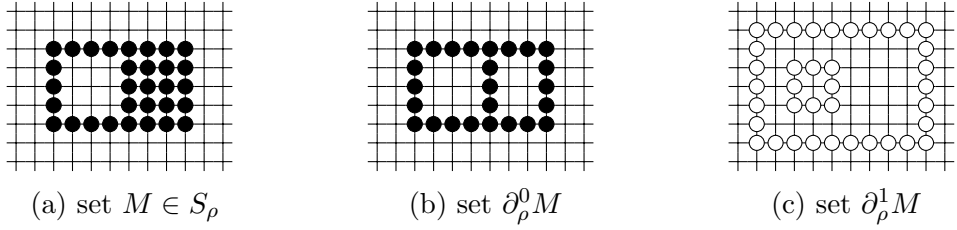

\centering
\begin{subfigure}[t]{0.3\textwidth}
\centering
\showgoboard{
------------,
------------,
--00000000--,
--0---0000--,
--0---0000--,
--0---0000--,
--00000000--,
------------,
------------}
\caption{set $M\in S_\rho$}
\end{subfigure}
\hfill
\begin{subfigure}[t]{0.3\textwidth}
\centering
\showgoboard{
------------,
------------,
--00000000--,
--0---0--0--,
--0---0--0--,
--0---0--0--,
--00000000--,
------------,
------------}
\caption{set $\partial^0_\rho M$}
\end{subfigure}
\hfill
\begin{subfigure}[t]{0.3\textwidth}
\centering
\showgoboard{
------------,
-1111111111-,
-1--------1-,
-1-111----1-,
-1-1-1----1-,
-1-111----1-,
-1--------1-,
-1111111111-,
------------}
\caption{set $\partial^1_\rho M$}
\end{subfigure}
\caption{Anatomy of a digital image from Definition \ref{eq:bdrylayers}.
\label{fig:anatomy}}
\end{figure*}

Paths help describe the topology of sets in $S_\rho$.
It is easy to check that their concatenation is well-defined.

\begin{definition}\label{def:concatenate}
Let $\rho>0$, and let $x,z\in\Delta_\rho$.
A finite sequence 
\[p=(\xi_0,\xi_1,\ldots,\xi_k)\in\Delta_\rho^{k+1}\]
with $k\in\N_0$ satisfying $\xi_0=x$, $\xi_k=z$ and 
\[\|\xi_\ell-\xi_{\ell-1}\|_\infty\le\rho\quad\forall\,\ell\in(0,k]\]
is called a path from $x$ to $z$ of length $L(p):=k$.
We write $p(\ell):=\xi_\ell$ for $\ell\in[0,k]$, and we denote the space of all paths 
from $x$ to $z$ by $P_\rho(x,z)$.

For any $x,y,z\in\Delta_\rho$, $p_0\in P_\rho(x,y)$ and $p_1\in P_\rho(y,z)$ we define 
the concatenation $p_1\circ p_0\in P_\rho(x,z)$ by
\begin{equation}\label{concatenation}
(p_1\circ p_0)(\ell):=\begin{cases}
p_0(\ell),&\ell\in[0,L(p_0)],\\
p_1(\ell-L(p_0)),&\ell\in(L(p_0),L(p_0)+L(p_1)].
\end{cases}
\end{equation}
\end{definition}

The space $\bd_\rho$ is a central object of this paper.
Much of the content of Sections \ref{sec:geometry} and \ref{sec:boundary:pairs} 
will be used in showing that it is the collection of all
pairs $(\partial_\rho^0M,\partial_\rho^1M)$ of sets $M\in S_\rho$.
%It is the central object of this paper.
For the significance of axiom \eqref{bdry:axiom:4} see Figure \ref{fig:axiom:failure}.

\begin{figure*}[t]
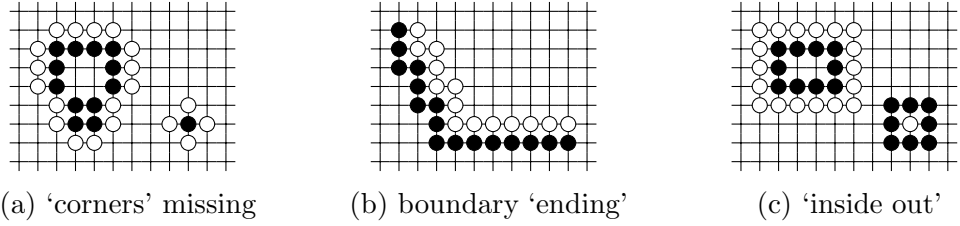

\centering
\begin{subfigure}[t]{0.3\textwidth}
\centering
\showgoboard{
------------,
--1111------,
-100001-----,
-10--01-----,
-10--01-----,
--10 01---1--,
--1001--101-,
---11----1--,
------------}
\caption{`corners' missing}
\end{subfigure}
\hfill
\begin{subfigure}[t]{0.3\textwidth}
\centering
\showgoboard{
------------,
-01---------,
-011--------,
-001--------,
--011-------,
--001-------,
---01111111-,
---00000000-,
------------}
\caption{boundary `ending'}
\end{subfigure}
\hfill
\begin{subfigure}[t]{0.3\textwidth}
\centering
\showgoboard{
------------,
-111111-----,
-100001-----,
-10--01-----,
-100001-----,
-111111-000-,
--------010-,
--------000-,
------------}
\caption{`inside out'}
\end{subfigure}
\caption{Examples of pairs $(D_0,D_1)\in S_\rho^+\times S_\rho^+$ satisfying 
axioms \eqref{bdry:axiom:0} through \eqref{bdry:axiom:3}, but not axiom 
\eqref{bdry:axiom:4} from Definition \ref{def:boundary:pairs}.\label{fig:axiom:failure}}
\end{figure*}

\begin{definition}\label{def:boundary:pairs}
Let $\rho>0$. 
Then $\bd_\rho^-\subset S_\rho^+\times S_\rho^+$ % 2^{\Delta_\rho}\times 2^{\Delta_\rho}$ 
is the space of all pairs $(D_0,D_1)$ satisfying
\begin{align}
&D_0\neq\emptyset\neq D_1,
\label{bdry:axiom:0}\\
&D_0\cap D_1=\emptyset,
\label{bdry:axiom:1}\\
&(\forall\,x\in D_0)(\exists z\in D_1):\ \|x-z\|_\infty=\rho,
\label{bdry:axiom:2}\\
&(\forall\,z\in D_1)(\exists x\in D_0):\ \|x-z\|_\infty=\rho,
\label{bdry:axiom:3}\\
&\begin{aligned}
&(\forall\,x\in D_0)
(\forall\,z\in D_1)
(\forall\,p\in P_\rho(x,z)):\\
&\phantom{(\forall\,x\in D_0)}
(L(p)>1)\Rightarrow(\exists\,\ell\in(0,L(p))):\ p(\ell)\in D_0\cup D_1),
\end{aligned}\label{bdry:axiom:4}
\end{align}
and $\bd_\rho:=\bd_\rho^-\cup\{(\emptyset,\emptyset)\}$.
\end{definition}

We also introduce collections of sets that will allow to formalize desirable
properties of restriction and interpolation operators.

\begin{definition}\label{def:collections}
Let $\rho>0$ and $\rho'>0$.
Then we call
\[C_\rho:=\{M\in S_\rho:(\forall x,x'\in M)(\exists\,p\in P_\rho(x,x'))(\forall\,\ell\in[0,L(p)]):
(p(\ell)\in M)\}\]
the collection of all connected sets in $S_\rho$, 
and for every $M\in S_\rho$, we call
\begin{align}
A_{\rho'}(M)&:=\argmin\{\dist_H(M',M):M'\in S_{\rho'}\},\label{def:A}\\
V_{\rho}^{\rho'}(M)
&:=\{M'\in S_{\rho'}:B_{\rho/2}(M)\subset B_{\rho'/2}(M')\}\label{def:V}
\end{align}
the collection of all best approximations to $M$ in $S_{\rho'}$
and the collection of all Voronoi covers of $M$ in $S_{\rho'}$, respectively.
\end{definition}

\begin{figure*}[b]
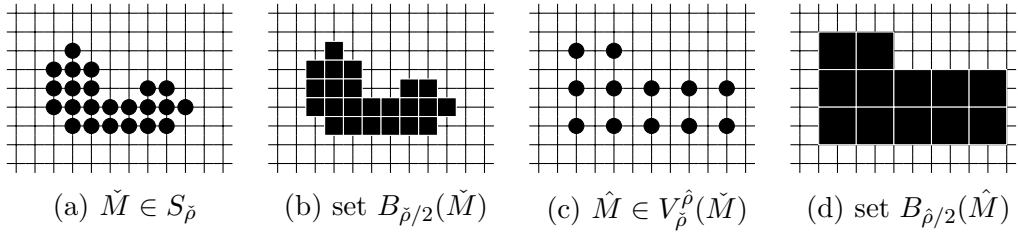

\centering
\begin{subfigure}[t]{0.24\textwidth}
\centering
\showgoboard{
------------,
------------,
---0--------,
--000-------,
--000--00---,
--00000000--,
---000000---,
------------,
------------}
\caption{$\check M\in S_{\check\rho}$}
\end{subfigure}
\hfill
\begin{subfigure}[t]{0.24\textwidth}
\centering
\showgoboard{
------------,
------------,
---b--------,
--bbb-------,
--bbb--bb---,
--bbbbbbbb--,
---bbbbbb---,
------------,
------------}
\caption{set $B_{\check\rho/2}(\check M)$}
\end{subfigure}
\hfill
\begin{subfigure}[t]{0.24\textwidth}
\centering
\showgoboard{
------------,
------------,
--0-0-------,
------------,
--0-0-0-0-0-,
------------,
--0-0-0-0-0-,
------------,
------------}
\caption{$\hat M\in V_{\check\rho}^{\hat\rho}(\check M)$}
\end{subfigure}
\hfill
\begin{subfigure}[t]{0.24\textwidth}
\centering
\showgoboard{
------------,
------------,
--B-B-------,
------------,
--B-B-B-B-B-,
------------,
--B-B-B-B-B-,
------------,
------------}
\caption{set $B_{\hat\rho/2}(\hat M)$}
\end{subfigure}
\caption{Illustration of covering property encoded in \eqref{def:V} from Definition 
\ref{def:collections} with $\rho=\check\rho$ and $\rho'=\hat\rho=2\check\rho$:
Set from (d) covers set from (b).
White lines in (b) and (d) are visual aids only.
\label{fig:Voronoi}}
\end{figure*}

The collection $V_{\rho}^{\rho'}(M)$ consists of all $M'\in S_{\rho'}$ 
such that the Voronoi cells generated by $M'$ in the Voronoi diagram generated by 
$\Delta_{\rho'}$ cover the Voronoi cells generated by $M$ in the Voronoi 
diagram generated by $\Delta_\rho$.
For an illustration see Figure \ref{fig:Voronoi}.

\subsection{Outline}\label{sec:outline}

\begin{figure}[b]
\[\begin{tikzcd}
\bd_{\hat\rho}
  \arrow[dashed]{r}{\partial I} 
&\bd_{\check\rho}
  \arrow[dashed]{r}{\partial R}
&\bd_{\hat\rho}\\ 
S_{\hat\rho} \arrow{u}{\tr_{\hat\rho}} 
  \arrow{r}{I}   
&S_{\check\rho}
  \arrow{u}{\tr_{\check\rho} }
  \arrow{r}{R} 
&S_{\hat\rho} \arrow{u}[swap]{\tr_{\hat\rho}}
\end{tikzcd}\]
\caption{Overview of spaces and mappings.% to be examined.
\label{fig:overview}}
\end{figure}

Section \ref{sec:straight:paths} introduces basic tools,
and Section \ref{sec:geometry} explores various aspects of the geometry of digital images.
In Section \ref{sec:boundary:pairs}, we use the insights gathered to
prove that for every $\rho>0$, the mapping
\begin{equation}\label{def:tr}
\tr_\rho: S_\rho\to S_\rho^+\times S_\rho^+,\quad
\tr_\rho(M)=(\partial^0_\rho M,\partial^1_\rho M),
\end{equation}
is injective, and that 
\[\tr_\rho(S_\rho)=\bd_\rho,\] 
i.e.\ that the collection $\bd_\rho$ from Definition \ref{def:boundary:pairs}
is the space of all boundary pairs of sets from $S_\rho$.
In particular, its inverse $\tr_\rho^{-1}:\bd_\rho\to S_\rho$ is well-defined.
%which enables the very definition of the operators $\partial R$ and $\partial I$
%to be introduced below.
From then on, we consider fixed grids
\begin{equation}\label{fixed:grids}
\Delta_{\hat\rho}\subsetneq\Delta_{\check\rho},\quad
\check\rho,\hat\rho>0,\quad 
\hat\rho\in\check\rho\N_2.
\end{equation}
In Section \ref{sec:covering:theory}, we investigate the restriction and interpolation operators
\begin{align}
&R:S_{\check\rho}\to S_{\hat\rho},&&
R(M)=B_{\hat\rho/2}(M)\cap\Delta_{\hat\rho},&\label{def:R}\\
&I:S_{\hat\rho}\to S_{\check\rho},&&
I(M)=B_{\hat\rho/2}(M)\cap\Delta_{\check\rho}.&\label{def:I}
\end{align}
The operator $R$ is referred to as \emph{outer Jordan digitization}
\cite[Definition 2.8]{Klette} in digital geometry, and when $\hat\rho/\check\rho\in 2\N_1+1$,
the operator $I$ coincides with the \emph{subdivision operation} from computational dynamics
\cite[Section 3]{Dellnitz}.
We prefer them over other operator pairs for the following reasons:
\begin{itemize}
\item [i)] They are easily implementable (properties \eqref{R:union},
\eqref{R:Minkowski:sum}, \eqref{I:union} and \eqref{I:Minkowski:sum}).

\item [ii)] They have good approximation properties.
For any $\check M\in S_{\check\rho}$, the image $R(\check M)$
is the maximal best approximation to $\check M$ in $S_{\hat\rho}$ (properties
\eqref{best:approximation} and \eqref{maximal:best:approximation}), 
and $I$ satisfies a similar error estimate \eqref{I:Hausdorff}.
(For a brief discussion of why choosing the best approximation $I=\mathrm{id}$ 
is not an option, see Example \ref{ex:I}.)

\item [iii)] They map any $\check M\in S_{\check\rho}$ and $\hat M\in S_{\hat\rho}$
to the minimal elements of $V_{\check\rho}^{\hat\rho}(\check M)$ and 
$V_{\hat\rho}^{\check\rho}(\hat M)$, respectively (properties 
\eqref{voronoi:overapproximation}, \eqref{minimal:voronoi:overapproximation},
\eqref{I:voronoi:overapproximation} and \eqref{I:minimal:voronoi:overapproximation}).
This covering property is crucial for applications in computational dynamics 
and control theory such as \cite{Dellnitz} and \cite{Saint-Pierre}.

\item [iv)] They map any connected $\check M\in S_{\check\rho}$ and $\hat M\in S_{\hat\rho}$
to a connected set (properties \eqref{R:connected} and \eqref{I:connected}).
Algorithms such as the boundary tracking method from \cite{Rieger} cannot dispense
with connectedness.

\item [v)] The boundaries of their images are close to the boundaries of their preimages
(properties \eqref{new:boundary:in:old:boundary} and \eqref{I:new:boundary:in:old:boundary}).
It is essential for the aim of this paper, i.e.\ for working with boundaries instead of
full sets to reduce computational complexity, that the boundaries of the images are not 
more complicated than those of the preimages.

\item [vi)] We have $R\circ I=\mathrm{id}$ when ${\hat{\rho}}/\check{\rho}\in 2\N_1+1$ by
\eqref{I:R:id}, and we \emph{almost} have $R\circ I=\mathrm{id}$ when 
$\hat\rho/\check\rho\in 2\N_1$ by \eqref{I:R:almost:id}.
Theorem \ref{thm:no:better:covering:operators} shows that equality cannot be 
achieved in the latter case without sacrificing properties \eqref{voronoi:overapproximation},
\eqref{minimal:voronoi:overapproximation}, \eqref{I:voronoi:overapproximation} 
and \eqref{I:minimal:voronoi:overapproximation}.
\end{itemize}

%The operators $R$, $I$ and $\tr_\rho$ 
In Section \ref{implementations}, we propose algorithms for the evaluation of the lifted
restriction and interpolation operators 
\begin{align}
&\partial R:\bd_{\check\rho}\to\bd_{\hat\rho},
&&\partial R:=\tr_{\hat\rho}\circ R\circ\tr_{\check\rho}^{-1},&\label{def:dR}\\
&\partial I:\bd_{\hat\rho}\to\bd_{\check\rho},
&&\partial I:=\tr_{\check\rho}\circ I\circ\tr_{\hat\rho}^{-1},&\label{def:dI}
\end{align}
which complete the diagram in Figure \ref{fig:overview}.
The algorithms do not evaluate the operators $\tr_{\check\rho}$, $\tr_{\hat\rho}$, $R$, $I$, 
$\tr_{\check\rho}^{-1}$ and $\tr_{\hat\rho}^{-1}$, but compute the desired boundary pair 
directly from the input.
A small computational example that illustrates the above commutative diagram is provided
in Figure \ref{fig:computational:example} at the end of the paper.

\section{Straight paths}\label{sec:straight:paths}

The proof of the following lemma is elementary.
Note that the usual rounding function does not have properties 
\eqref{round:plus:int} and \eqref{round:close:numbers}.

\begin{lemma}\label{lem:rounding}
The specific rounding function 
\[
\rd:\R\to\R,\quad
\rd(x):=\begin{cases}
\lfloor x\rfloor,&x-\lfloor x\rfloor\in[0,1/2),\\
\lceil x\rceil,&x-\lfloor x\rfloor\in[1/2,1),
\end{cases}
\]
has the following properties:
\begin{align}
&\forall\,k\in\Z:&&\rd(k)=k,&\label{rounding:integers:is:easy}\\
&\forall\,x\in\R,\ \forall\,k\in\Z:&&\rd(x)+k=\rd(x+k),&\label{round:plus:int}\\
&\forall\,x\in\R:&&|\rd(x)|\le\rd(|x|),&\label{round:and:abs}\\
&\forall\,x,z\in\R:&&(x\le z)\Rightarrow(\rd(x)\le\rd(z)).&\label{round:monotone}\\
&\forall\,x,z\in\R,\ \forall\,k\in\N:&&(|x-z|\le k)\Rightarrow|\rd(x)-\rd(z)|\le k.
\label{round:close:numbers}
\end{align}
\end{lemma}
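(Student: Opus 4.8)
The plan is to verify each of the five properties directly from the explicit piecewise definition of $\rd$, treating integer and non-integer arguments separately and reducing everything to statements about $\lfloor\cdot\rfloor$ and $\lceil\cdot\rceil$. The key observation to establish first is a normal form: for $x\in\R$ with fractional part $\{x\}:=x-\lfloor x\rfloor$, we have $\rd(x)=\lfloor x+\tfrac12\rfloor$ when $\{x\}\ne\tfrac12$, and in general $\rd(x)\in\{\lfloor x\rfloor,\lceil x\rceil\}$ with $\rd(x)-x\in(-\tfrac12,\tfrac12]$. Once this bound $|\rd(x)-x|\le\tfrac12$ (with strictness on one side) is in hand, most of the rest is routine.

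For \eqref{rounding:integers:is:easy}, if $k\in\Z$ then $\{k\}=0\in[0,1/2)$, so $\rd(k)=\lfloor k\rfloor=k$. For \eqref{round:plus:int}, adding an integer $k$ to $x$ leaves the fractional part unchanged, so the same branch of the definition is selected, and $\lfloor x+k\rfloor=\lfloor x\rfloor+k$, $\lceil x+k\rceil=\lceil x\rceil+k$ give the claim. For \eqref{round:monotone}, suppose $x\le z$; using the normal form $\rd(x)-x\in(-\tfrac12,\tfrac12]$ one gets $\rd(x)\le x+\tfrac12\le z+\tfrac12$, and since $\rd(z)$ is the integer in $(z-\tfrac12,z+\tfrac12]$, a short case check (or the inequality $\rd(x)<z+\tfrac12$ when $\{x\}\ne\tfrac12$, together with direct handling of the $\{x\}=\tfrac12$ case) yields $\rd(x)\le\rd(z)$. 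For \eqref{round:and:abs}: if $x\ge0$ then $|\rd(x)|=\rd(x)=\rd(|x|)$ since $\rd(x)\ge0$; if $x<0$, write $x=-|x|$ and compare the branch selected for $x$ with that for $|x|$, splitting on whether $\{|x|\}\in[0,1/2)$ or $[1/2,1)$ — in the first case $\rd(-|x|)=-\lfloor|x|\rfloor$ and $\rd(|x|)=\lfloor|x|\rfloor$ so equality holds; in the second case $\rd(-|x|)=-\lceil|x|\rceil=-(\lfloor|x|\rfloor+1)$ while $\rd(|x|)=\lceil|x|\rceil=\lfloor|x|\rfloor+1$, so $|\rd(x)|=\lfloor|x|\rfloor+1=\rd(|x|)$. (The integer case is covered by \eqref{rounding:integers:is:easy}.)

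The main obstacle is \eqref{round:close:numbers}, the Lipschitz-type bound, because the naive triangle inequality $|\rd(x)-\rd(z)|\le|\rd(x)-x|+|x-z|+|z-\rd(z)|\le\tfrac12+k+\tfrac12=k+1$ is off by one. The fix is to use the one-sided strictness in the normal form: $\rd(x)-x\in(-\tfrac12,\tfrac12]$, i.e. the error is in a half-open interval of length exactly $1$. So $\rd(x)-\rd(z)=(\rd(x)-x)-(\rd(z)-z)+(x-z)$ lies in $(-\tfrac12,\tfrac12]-(-\tfrac12,\tfrac12]+[-k,k]=(-1,1)+[-k,k]=(-k-1,k+1)$, an open interval; since $\rd(x)-\rd(z)$ is an integer strictly between $-(k+1)$ and $k+1$, it satisfies $|\rd(x)-\rd(z)|\le k$. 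I would state the half-open error bound as a preliminary sublemma (or inline claim) and reuse it for both \eqref{round:monotone} and \eqref{round:close:numbers}; verifying it is the one place where the precise tie-breaking rule (round half up) matters and must be checked against the definition's two branches.
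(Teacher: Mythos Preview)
The paper gives no proof of this lemma at all; it simply states that ``the proof of the following lemma is elementary'' and moves on. Your proposal therefore cannot be compared to the paper's argument, but it is a correct and complete plan, and the key device---recording the one-sided error bound $\rd(x)-x\in(-\tfrac12,\tfrac12]$ and exploiting the resulting \emph{open} interval $(-1,1)$ for differences---is exactly the right way to obtain \eqref{round:close:numbers} without the off-by-one loss from a naive triangle inequality. The same device handles \eqref{round:monotone} cleanly: $\rd(z)-\rd(x)\in(-1,1)+[0,\infty)$ is an integer strictly greater than $-1$, hence nonnegative.

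One small slip to fix in your treatment of \eqref{round:and:abs}: in the half-integer subcase $\{|x|\}=\tfrac12$ (which you fold into the ``second case'' $\{|x|\}\in[\tfrac12,1)$), your formula $\rd(-|x|)=-\lceil|x|\rceil$ is wrong. If $|x|=n+\tfrac12$ then $x=-n-\tfrac12$, so $x-\lfloor x\rfloor=\tfrac12$ and $\rd(x)=\lceil x\rceil=-n=-\lfloor|x|\rfloor$, not $-\lceil|x|\rceil$. This does not damage the conclusion, since then $|\rd(x)|=n<n+1=\rd(|x|)$, but it does mean that strict inequality can occur in \eqref{round:and:abs} (indeed this is the only place it occurs), so your claimed equality in both subcases should be relaxed to $\le$.
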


The following path will be used frequently throughout the paper.
It is a special case of the digital ray from Definition 9.1 in \cite{Klette}.

\begin{lemma}\label{lem:grid:line}
Let $\rho>0$, let $x,z\in\Delta_\rho$, and let 
$k:=\|x-z\|_\infty/\rho$. 
Then the mapping $\phi(\,\cdot\,;x,z):[0,k]\to\Delta_\rho$ given by
$\phi(\,\cdot\,;x,z;\rho)=(x)$ when $x=z$ and 
\[\phi(\ell;x,z;\rho)_j:=\rd(\tfrac{k-\ell}{k}\tfrac{x_j}{\rho}
+\tfrac{\ell}{k}\tfrac{z_j}{\rho})\rho\quad\forall j\in[1,m],\ \forall\ell\in[0,k]\]
otherwise has the properties
\begin{align}
&\phi(0;x,z;\rho)=x\label{endpoints:0}\\
&\phi(k;x,z;\rho)=z\label{endpoints:1}\\
&\|\phi(\ell;x,z;\rho)-x\|_\infty=\ell\rho&&\forall\,\ell\in[0,k],&
\label{distances:to:endpoints:0}\\
&\|\phi(\ell;x,z;\rho)-z\|_\infty=(k-\ell)\rho&&\forall\,\ell\in[0,k],&
\label{distances:to:endpoints:1}\\
&\|\phi(\ell+1;x,z;\rho)-\phi(\ell;x,z;\rho)\|_\infty=\rho&&\forall\,\ell\in[0,k).&
\label{step:size:rho}
\end{align}
In particular, we have $\phi(\,\cdot\,;x,z;\rho)\in P_\rho(x,z)$ 
and $L(\phi(\,\cdot\,;x,z;\rho))=k$.
In addition, for every $y\in\Delta_\rho$, we have
\begin{equation}\label{path:stays:in:square}
\|\phi(\ell;x,z;\rho)-y\|_\infty\le\max\{\|x-y\|_\infty,\|y-z\|_\infty\}
\quad \forall\ell\in[0,k].
\end{equation}
\end{lemma}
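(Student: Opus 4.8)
The plan is to verify the listed properties more or less in the order stated, reducing everything to scalar facts about the rounding function \rd\ from Lemma \ref{lem:rounding}. First, note that for each coordinate $j$ the scalar value $t_j(\ell):=\tfrac{k-\ell}{k}\tfrac{x_j}{\rho}+\tfrac{\ell}{k}\tfrac{z_j}{\rho}$ is the affine interpolant between $x_j/\rho$ and $z_j/\rho$, so $\phi(\ell;x,z;\rho)_j=\rd(t_j(\ell))\rho$. Since $x,z\in\Delta_\rho$, the endpoints $x_j/\rho$ and $z_j/\rho$ are integers, so by \eqref{rounding:integers:is:easy} we get $\phi(0;x,z;\rho)_j=x_j$ and $\phi(k;x,z;\rho)_j=z_j$, which is \eqref{endpoints:0} and \eqref{endpoints:1}.

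For \eqref{distances:to:endpoints:0}, fix $\ell$ and a coordinate $j$. Write $a:=x_j/\rho$ and $b:=z_j/\rho$, both integers, with $|a-b|\le k$. The scalar distance is $|\rd(t_j(\ell))-a|=|\rd(t_j(\ell))-\rd(a)|$ by \eqref{rounding:integers:is:easy}. I would first use monotonicity \eqref{round:monotone} to handle the case $a\le b$ versus $a\ge b$, together with the identity $t_j(\ell)-a=\tfrac{\ell}{k}(b-a)$, to show $|\rd(t_j(\ell))-a|\le\ell|b-a|/k\le\ell$; this gives the $\le\ell\rho$ bound for every coordinate. The reverse inequality — that equality $\|\cdot\|_\infty=\ell\rho$ is actually attained — comes from choosing a coordinate $j^*$ with $|a-b|=k$ (which exists because $k=\|x-z\|_\infty/\rho$), where $t_{j^*}(\ell)=a\pm\ell$ is itself an integer, so $\rd$ fixes it and $|\rd(t_{j^*}(\ell))-a|=\ell$ exactly. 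The estimate \eqref{distances:to:endpoints:1} is symmetric, replacing $\ell$ by $k-\ell$ and swapping the roles of $x$ and $z$. For the step-size property \eqref{step:size:rho}, observe $t_j(\ell+1)-t_j(\ell)=(b-a)/k$, hence $|t_j(\ell+1)-t_j(\ell)|\le 1$, so \eqref{round:close:numbers} with $k=1$ gives $|\rd(t_j(\ell+1))-\rd(t_j(\ell))|\le 1$, i.e. the step is at most $\rho$ in every coordinate; and it is exactly $\rho$ because \eqref{distances:to:endpoints:0} forces consecutive points to differ by exactly $\rho$ (from $\phi(\ell+1)$ to $x$ is $(\ell+1)\rho$, from $\phi(\ell)$ to $x$ is $\ell\rho$, and the triangle inequality pins the step down). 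That $\phi(\,\cdot\,;x,z;\rho)\in P_\rho(x,z)$ with length $k$ is then immediate from Definition \ref{def:concatenate}.

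The one estimate requiring slightly more care is \eqref{path:stays:in:square}, that the path stays inside the $\|\cdot\|_\infty$-bounding box determined by $x$, $y$, $z$. Working coordinatewise with $a=x_j/\rho$, $b=z_j/\rho$, $c=y_j/\rho$, I want $|\rd(t_j(\ell))-c|\le\max\{|a-c|,|b-c|\}$. Since $t_j(\ell)$ lies in the closed interval between $a$ and $b$, the real number $t_j(\ell)-c$ lies between $a-c$ and $b-c$, so $|t_j(\ell)-c|\le\max\{|a-c|,|b-c|\}=:N$, which is an integer. Then $|\rd(t_j(\ell))-c|=|\rd(t_j(\ell))-\rd(c)|\le N$ by \eqref{round:close:numbers} (applied with the integer bound $N$), giving the claim after multiplying by $\rho$ and taking the max over $j$. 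The main obstacle, such as it is, is purely bookkeeping: keeping the scalar-vs-vector and the $\ell$-vs-$(k-\ell)$ symmetries straight, and being careful that the "equality is attained" halves of \eqref{distances:to:endpoints:0}–\eqref{distances:to:endpoints:1} genuinely use the existence of a coordinate realizing $\|x-z\|_\infty$; there is no deep difficulty, which is why the paper calls the digital-ray construction elementary.
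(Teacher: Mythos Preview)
Your plan is essentially the paper's: reduce each property to a coordinatewise scalar estimate and invoke the properties of $\rd$ from Lemma~\ref{lem:rounding}, using a coordinate $j^*$ realizing $\|x-z\|_\infty$ for the equality halves. One slip: the intermediate bound $|\rd(t_j(\ell))-a|\le\ell|b-a|/k$ you state en route to \eqref{distances:to:endpoints:0} is false in general (take $a=0$, $b=1$, $k=2$, $\ell=1$: then $\rd(1/2)=1>1/2$). What monotonicity actually gives is $a\le\rd(t_j(\ell))\le\rd(a+\ell)=a+\ell$ when $a\le b$, which suffices; the paper instead uses \eqref{round:plus:int}, \eqref{round:and:abs}, \eqref{round:monotone} to write $|\rd(\tfrac{\ell}{k}\tfrac{z_j-x_j}{\rho})|\le\rd(\tfrac{\ell}{k}\tfrac{|z_j-x_j|}{\rho})\le\rd(\ell)=\ell$ directly. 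Beyond that, only tactical differences: for the equality in \eqref{step:size:rho} the paper computes $\pm\rho$ explicitly in a max-coordinate rather than your (cleaner) reverse-triangle-inequality argument, and for \eqref{path:stays:in:square} it again uses \eqref{round:plus:int}+\eqref{round:and:abs}+\eqref{round:monotone} via a convexity estimate rather than \eqref{round:close:numbers} as you do. Both routes work.
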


\begin{proof}
Since $x_j/\rho\in\Z$ and $z_j/\rho\in\Z$ for all $j\in[1,m]$, 
statement \eqref{rounding:integers:is:easy} implies \eqref{endpoints:0} and \eqref{endpoints:1}.
\medskip

We show statement \eqref{distances:to:endpoints:0}.
Let $j\in[1,m]$.
Since $x_j/\rho\in\Z$, we obtain from \eqref{round:plus:int}, \eqref{round:and:abs} 
and \eqref{round:monotone} that
\begin{align*}
|\phi(\ell;x,z;\rho)_j-x_j|
&=|\rd(\tfrac{k-\ell}{k}\tfrac{x_j}{\rho}+\tfrac{\ell}{k}\tfrac{z_j}{\rho})
-\tfrac{x_j}{\rho}|\rho
=|\rd(\tfrac{\ell}{k}\tfrac{z_j-x_j}{\rho})|\rho\\
&\le\rd(\tfrac{\ell}{k}\tfrac{|z_j-x_j|}{\rho})\rho
\le\rd(\tfrac{\ell}{k}\tfrac{\|z-x\|_\infty}{\rho})\rho
=\rd(\ell)\rho=\ell\rho.
\end{align*}
By definition, there exists $j\in[1,m]$ with $z_j-x_j=\pm\|z-x\|_\infty$,
and the same computation yields for this $j$ that
\begin{align*}
|\phi(\ell;x,z;\rho)_j-x_j|
=|\rd(\tfrac{\ell}{k}\tfrac{\pm\|z-x\|_\infty}{\rho})|\rho
=|\rd(\pm\ell)|\rho
=|\pm\ell|\rho
=\ell\rho.
\end{align*}
All in all, we have shown that $\|\phi(\ell;x,z;\rho)-x\|_\infty=\ell\rho$.
A similar argument shows that statement \eqref{distances:to:endpoints:1}
holds as well.

\medskip

We show statement \eqref{step:size:rho}.
Let $j\in[1,m]$.
It follows from %$x_j/\rho\in\Z$ and 
\eqref{round:plus:int} that
\begin{equation}\begin{aligned}
&\phi(\ell+1;x,z;\rho)_j-\phi(\ell;x,z;\rho)_j\\
%=\rd(\tfrac{k-\ell-1}{k}\tfrac{x_j}{\rho}+\tfrac{\ell+1}{k}\tfrac{z_j}{\rho})\rho
%-\rd(\tfrac{k-\ell}{k}\tfrac{x_j}{\rho}+\tfrac{\ell}{k}\tfrac{z_j}{\rho})\rho\\
&=\rd(\tfrac{x_j}{\rho}+\tfrac{\ell+1}{k}\tfrac{z_j-x_j}{\rho})\rho
-\rd(\tfrac{x_j}{\rho}+\tfrac{\ell}{k}\tfrac{z_j-x_j}{\rho})\rho\\
&=\rd(\tfrac{\ell+1}{k}\tfrac{z_j-x_j}{\rho})\rho
-\rd(\tfrac{\ell}{k}\tfrac{z_j-x_j}{\rho})\rho.
\end{aligned}\label{without:modulus}\end{equation}
Since
\[
|\tfrac{\ell+1}{k}\tfrac{z_j-x_j}{\rho}-\tfrac{\ell}{k}\tfrac{z_j-x_j}{\rho}|
=\tfrac{1}{k}\tfrac{|z_j-x_j|}{\rho}
\le\tfrac{1}{k}\tfrac{\|z-x\|_\infty}{\rho}
=1
\]
it follows from \eqref{without:modulus} and \eqref{round:close:numbers} that
\begin{align*}
&|\phi(\ell+1;x,z;\rho)_j-\phi(\ell;x,z;\rho)_j|
=|\rd(\tfrac{\ell+1}{k}\tfrac{z_j-x_j}{\rho})
-\rd(\tfrac{\ell}{k}\tfrac{z_j-x_j}{\rho})|\rho\le\rho.
\end{align*}
By definition, there exists $j\in[1,m]$ with $z_j-x_j=\pm\|z-x\|_\infty$,
for which
\begin{align*}
&\phi(\ell+1;x,z;\rho)_j-\phi(\ell;x,z;\rho)_j
=\rd(\tfrac{\ell+1}{k}\tfrac{z_j-x_j}{\rho})\rho
-\rd(\tfrac{\ell}{k}\tfrac{z_j-x_j}{\rho})\rho\\
&=\rd(\tfrac{\ell+1}{k}\tfrac{\pm\|z-x\|_\infty}{\rho})\rho
-\rd(\tfrac{\ell}{k}\tfrac{\pm\|z-x\|_\infty}{\rho})\rho
=\rd(\pm(\ell+1))\rho-\rd(\pm\ell)\rho
=\pm\rho,
\end{align*}
and thus statement \eqref{step:size:rho} holds.

\medskip

We check \eqref{path:stays:in:square}.
For every $j\in[1,m]$, statements \eqref{round:plus:int}, \eqref{round:and:abs} 
and \eqref{round:monotone} yield
\begin{align*}
|\phi(\ell;x,z;\rho)_j-y_j|
&=|\rd(\tfrac{k-\ell}{k}\tfrac{x_j}{\rho}+\tfrac{\ell}{k}\tfrac{z_j}{\rho})\rho-y_j|
=|\rd(\tfrac{k-\ell}{k}\tfrac{x_j}{\rho}+\tfrac{\ell}{k}\tfrac{z_j}{\rho})-\tfrac{y_j}{\rho}|\rho\\
&=|\rd(\tfrac{k-\ell}{k}\tfrac{x_j-y_j}{\rho}+\tfrac{\ell}{k}\tfrac{z_j-y_j}{\rho})|\rho
\le\rd(\tfrac{k-\ell}{k}\tfrac{|x_j-y_j|}{\rho}+\tfrac{\ell}{k}\tfrac{|z_j-y_j|}{\rho})\rho\\
&\le\rd(\tfrac{\max\{\|x-y\|_\infty,\|y-z\|_\infty\}}{\rho})\rho
=\max\{\|x-y\|_\infty,\|y-z\|_\infty\}.
\end{align*}

\end{proof}

We use the path $\phi$ to locate elements of $\partial^0_\rho M$ and $\partial^1_\rho M$.

\begin{lemma}\label{path:from:M:to:Mc:hits:boundaries}
Let $\rho>0$, let $M\in S_\rho^-$, let $x\in M$ and $z\in M^c\cap\Delta_\rho$,
and let $\phi(\,\cdot\,)=\phi(\,\cdot\,;x,z;\rho)$ be the path from Lemma \ref{lem:grid:line}.
Then the following statements hold:
\begin{itemize}
\item [a)] There exists $\ell\in[0,L(\phi))$ with
$\phi(\ell)\in\partial^0_\rho M$ and $\phi(\ell+1)\in\partial^1_\rho M$.
\item [b)] If $\|z-x\|_\infty=\dist(z,M)$, then we have $x\in\partial^0_\rho M$ and
$\phi(1)\in\partial^1_\rho M$ as well as $\|z-\phi(1)\|_\infty=\|z-x\|_\infty-\rho$
and $\|\phi(1)-x\|_\infty=\rho$.
\item [c)] If $\|x-z\|_\infty=\dist(x,M^c\cap\Delta_\rho)$, then %we have 
$z\in\partial^1_\rho M$ and $\phi(L(\phi)-1)\in\partial^0_\rho M$ as well as 
$\|z-\phi(L(\phi)-1)\|_\infty=\rho$ and 
$\|\phi(L(\phi)-1)-x\|_\infty=\|x-z\|_\infty-\rho$.
\end{itemize}
\end{lemma}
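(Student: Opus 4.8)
The plan is to walk along the path $\phi$ from $x\in M$ to $z\in M^c\cap\Delta_\rho$ and catch the first moment it leaves $M$. Concretely, consider the set of indices $\{\ell\in[0,L(\phi)]:\phi(\ell)\in M\}$. It contains $0$ since $\phi(0)=x\in M$ by \eqref{endpoints:0}, and it does not contain $L(\phi)$ since $\phi(L(\phi))=z\in M^c$ by \eqref{endpoints:1}; so it has a largest element $\ell^*$ with $0\le\ell^*<L(\phi)$, and by maximality $\phi(\ell^*)\in M$ while $\phi(\ell^*+1)\in M^c\cap\Delta_\rho$. By \eqref{step:size:rho} we have $\|\phi(\ell^*+1)-\phi(\ell^*)\|_\infty=\rho$, so $\phi(\ell^*)$ has a grid neighbour in $M^c$ at distance exactly $\rho$, which is precisely the defining condition \eqref{def:partial:1} for $\phi(\ell^*)\in\partial^0_\rho M$. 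For part a) it then remains to see $\phi(\ell^*+1)\in\partial^1_\rho M$: since $\phi(\ell^*+1)\in M^c\cap\Delta_\rho$ and is adjacent to $\phi(\ell^*)\in\partial^0_\rho M$, we have $\dist(\phi(\ell^*+1),\partial^0_\rho M)\le\rho$; and it cannot be $0$ because $\partial^0_\rho M\subset M$ is disjoint from $M^c$, and any grid point at positive $\|\cdot\|_\infty$-distance from a nonempty subset of $\Delta_\rho$ is at distance at least $\rho$. Hence the distance equals $\rho$, i.e.\ $\phi(\ell^*+1)\in\partial^1_\rho M$ by \eqref{def:partial:3} with $k=1$. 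This proves a).

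For part b), I would use the extra hypothesis $\|z-x\|_\infty=\dist(z,M)$ to force $\ell^*=0$. Indeed, if $\ell^*\ge 1$ then $\phi(\ell^*)\in M$ and, by \eqref{distances:to:endpoints:1}, $\|z-\phi(\ell^*)\|_\infty=(L(\phi)-\ell^*)\rho=\|z-x\|_\infty-\ell^*\rho<\|z-x\|_\infty=\dist(z,M)$, contradicting that $\phi(\ell^*)\in M$. So $\ell^*=0$, which by the argument above gives $x=\phi(0)\in\partial^0_\rho M$ and $\phi(1)\in\partial^1_\rho M$. The two distance identities are then immediate: $\|\phi(1)-x\|_\infty=\rho$ from \eqref{distances:to:endpoints:0} with $\ell=1$, and $\|z-\phi(1)\|_\infty=(L(\phi)-1)\rho=\|z-x\|_\infty-\rho$ from \eqref{distances:to:endpoints:1}.

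For part c), the hypothesis is symmetric: $\|x-z\|_\infty=\dist(x,M^c\cap\Delta_\rho)$, and I would show the path stays in $M$ until the very last step, i.e.\ $\ell^*=L(\phi)-1$. If $\ell^*+1<L(\phi)$, then $\phi(\ell^*+1)\in M^c\cap\Delta_\rho$ and, by \eqref{distances:to:endpoints:0}, $\|\phi(\ell^*+1)-x\|_\infty=(\ell^*+1)\rho<L(\phi)\rho=\|x-z\|_\infty=\dist(x,M^c\cap\Delta_\rho)$, a contradiction. Hence $\ell^*+1=L(\phi)$, so $z=\phi(L(\phi))=\phi(\ell^*+1)\in\partial^1_\rho M$ and $\phi(L(\phi)-1)=\phi(\ell^*)\in\partial^0_\rho M$ by part a)'s reasoning, and the distance identities follow from \eqref{step:size:rho} and \eqref{distances:to:endpoints:0} exactly as before.

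The only mild subtlety, and the single place worth stating carefully, is the lower bound ``$\dist(w,\partial^0_\rho M)\ge\rho$ for every $w\in M^c\cap\Delta_\rho$'': this is what rules out the degenerate possibility $\phi(\ell^*+1)\in\partial^0_\rho M$ (which is impossible anyway since $\partial^0_\rho M\subset M$) and, more to the point, pins the distance in \eqref{def:partial:3} down to exactly $k=1$. It holds because $w$ and any point of $\partial^0_\rho M\subset\Delta_\rho$ both lie on $\rho\Z^m$, so their $\|\cdot\|_\infty$-distance is a nonnegative integer multiple of $\rho$, and it is nonzero because $w\notin M\supset\partial^0_\rho M$. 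Everything else is bookkeeping with the identities from Lemma \ref{lem:grid:line}, so I do not expect a genuine obstacle; the proof is a ``first exit time'' argument dressed up with the explicit path $\phi$.
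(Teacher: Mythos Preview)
Your proof is correct and follows essentially the same first-exit-time approach as the paper. The only organizational difference is in parts b) and c): the paper directly verifies $\phi(1)\in M^c$ by computing $\|z-\phi(1)\|_\infty=\dist(z,M)-\rho$ (and symmetrically for c)), whereas you route through the statement $\ell^*=0$ via contradiction and then invoke the conclusion of a); both arguments are equivalent and equally short.
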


\begin{proof}
a) Since $x\neq z$ we have $\|x-z\|_\infty\ge\rho$, and \eqref{step:size:rho}
implies that $L(\phi)>0$.
By \eqref{endpoints:0}, we have $\phi(0)=x\in M$, so there exists a maximal index 
$\ell\in[0,L(\phi)]$ with $\phi(\ell)\in M$.
By \eqref{endpoints:1}, we have 
\[\phi(L(\phi))=z\in M^c\cap\Delta_\rho,\]
and hence $\ell<L(\phi)$.
By maximality, we have $\phi(\ell+1)\in M^c\cap\Delta_\rho$.
We conclude from \eqref{step:size:rho}, \eqref{def:partial:1} and \eqref{def:partial:3} 
that $\phi(\ell)\in\partial^0_\rho M$ 
and $\phi(\ell+1)\in\partial^1_\rho M$.

\medskip

b) According to \eqref{distances:to:endpoints:1}, we have
\[\|z-\phi(1)\|_\infty=\|z-x\|_\infty-\rho %<\|z-x\|_\infty
=\dist(z,M)-\rho,\]
which implies $\phi(1)\in M^c\cap\Delta_\rho$.
By \eqref{endpoints:0} and \eqref{distances:to:endpoints:0}, we have
\[\|\phi(1)-x\|_\infty=\|\phi(1)-\phi(0)\|_\infty=\rho,\]
so by \eqref{def:partial:1} and \eqref{def:partial:3}, we have $x\in\partial^0_\rho M$ and
$\phi(1)\in\partial^1_\rho M$.

\medskip

The proof of part c) is similar to the proof of part b).
\end{proof}

\section{Geometry of digital images} \label{sec:geometry}

The following statement is trivial, but helps shorten some arguments.

\begin{lemma}\label{neighbour:in:boundary}
Let $\rho>0$, and let $M\in S_\rho$. 
Then
\begin{align*}
&\forall\,x\in M:
&&(x\in\partial^0_\rho M)\ \Leftrightarrow\ (\exists\,z\in\partial^1_\rho M\cap B_\rho(x)),&\\
&\forall\,z\in M^c:
&&(z\in\partial^1_\rho M)\ \Leftrightarrow\ (\exists\,x\in\partial^0_\rho M\cap B_\rho(z)).&
\end{align*}
%Furthermore, we have
%\[(M\in S_\rho^-)\ \Leftrightarrow\ (M^c\cap\Delta_\rho\in S_\rho^-).\]
\end{lemma}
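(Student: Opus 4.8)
The statement asserts two equivalences, one for points of $M$ and one for points of $M^c$. I would prove the first; the second is entirely symmetric, with the roles of $\partial^0_\rho M$ and $\partial^1_\rho M$ and the roles of $M$ and $M^c\cap\Delta_\rho$ interchanged, so I would only remark on this at the end.

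For the first equivalence, fix $x\in M$. The forward direction is essentially a restatement of the definitions. If $x\in\partial^0_\rho M$, then by \eqref{def:partial:1} there exists $z\in M^c\cap\Delta_\rho$ with $\|x-z\|_\infty=\rho$; in particular $z\in B_\rho(x)$. It remains to see $z\in\partial^1_\rho M$, which by \eqref{def:partial:3} means $\dist(z,\partial^0_\rho M)=\rho$. Since $x\in\partial^0_\rho M$ and $\|x-z\|_\infty=\rho$, we have $\dist(z,\partial^0_\rho M)\le\rho$; and since $z\notin M$ while $\partial^0_\rho M\subset M$, no element of $\partial^0_\rho M$ equals $z$, so $\dist(z,\partial^0_\rho M)\ge\rho$ (grid points are $\rho$-separated). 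Hence $\dist(z,\partial^0_\rho M)=\rho$, i.e.\ $z\in\partial^1_\rho M$.

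For the backward direction, suppose there exists $z\in\partial^1_\rho M\cap B_\rho(x)$. Then $z\in M^c\cap\Delta_\rho$, and $\|x-z\|_\infty\le\rho$; since $x\in M\ni$ and $z\notin M$ we have $x\neq z$, so $\|x-z\|_\infty=\rho$ because distinct points of $\Delta_\rho$ are at distance at least $\rho$. This is exactly the condition in \eqref{def:partial:1} witnessing $x\in\partial^0_\rho M$. (One could instead invoke Lemma \ref{path:from:M:to:Mc:hits:boundaries}, but that is unnecessarily heavy here.)

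The only mild subtlety — hardly an obstacle — is the repeated use of the fact that two distinct points of the grid $\Delta_\rho=\rho\Z^m$ have $\|\cdot\|_\infty$-distance at least $\rho$, which pins down the "$\le\rho$, hence $=\rho$" steps. Since $M\in S_\rho$ only guarantees $M\neq\emptyset$, the set $\partial^1_\rho M$ may well be empty (e.g.\ $M=\Delta_\rho$), but in that case both sides of each equivalence are vacuously false, so nothing special needs to be said; the argument above is uniform in $M\in S_\rho$.
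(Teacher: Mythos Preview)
Your proof is correct. The paper itself gives no proof at all, prefacing the lemma with ``The following statement is trivial, but helps shorten some arguments,'' and your direct verification from the definitions \eqref{def:partial:1} and \eqref{def:partial:3} together with the $\rho$-separation of grid points is exactly the intended argument.

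Two small remarks. First, there is a stray symbol in your backward direction (``$x\in M\ni$''). Second, your claim that the second equivalence is ``entirely symmetric'' is slightly imprecise: the definitions of $\partial^0_\rho M$ and $\partial^1_\rho M$ are not symmetric (one is a neighbour condition, the other a distance condition), so the second equivalence is not obtained by literally swapping $M$ and $M^c\cap\Delta_\rho$ in your argument for the first. The parallel argument does go through, however, and is in fact a touch shorter: if $z\in\partial^1_\rho M$ then $\dist(z,\partial^0_\rho M)=\rho$ immediately yields the witness $x$; conversely, given $x\in\partial^0_\rho M\cap B_\rho(z)$ with $z\in M^c\cap\Delta_\rho$, the same grid-separation step gives $\|x-z\|_\infty=\rho$, hence $\dist(z,\partial^0_\rho M)\le\rho$, and $z\notin\partial^0_\rho M\subset M$ gives the reverse inequality. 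So the spirit of your remark is right.
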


We have the following alternative.

\begin{lemma}\label{alternative}
Let $\rho>0$ and let $M\in S_\rho$.
Then the following statements are equivalent:
(a) we have $M=\Delta_\rho$;
(b) we have $\partial^0_\rho M=\emptyset$ and $\partial^1_\rho M=\emptyset$;
(c) we have $\partial^0_\rho M=\emptyset$ or $\partial^1_\rho M=\emptyset$.
In particular, we have
\begin{align}
(M\in S_\rho^-)&\ \Leftrightarrow\ (\partial^0_\rho M\neq\emptyset\neq\partial^1_\rho M),
\label{S:minus:equiv:nonempty:partial}\\
(M=\Delta_\rho)&\ \Leftrightarrow\ (\partial^0_\rho M=\emptyset=\partial^1_\rho M).
\label{Delta:rho:equiv:empty:partial}
\end{align}
\end{lemma}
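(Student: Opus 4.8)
The plan is to prove the cycle of implications (a) $\Rightarrow$ (b) $\Rightarrow$ (c) $\Rightarrow$ (a), from which the equivalence of (a), (b), (c) follows, and then to read off \eqref{S:minus:equiv:nonempty:partial} and \eqref{Delta:rho:equiv:empty:partial} as immediate consequences.

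First I would handle (a) $\Rightarrow$ (b). If $M=\Delta_\rho$, then $M^c\cap\Delta_\rho=\emptyset$, so the defining condition in \eqref{def:partial:1} — the existence of $z\in M^c\cap\Delta_\rho$ with $\|x-z\|_\infty=\rho$ — can never be met, giving $\partial^0_\rho M=\emptyset$; and $\partial^1_\rho M=\emptyset$ directly from \eqref{def:partial:3} since it is a subset of $M^c\cap\Delta_\rho=\emptyset$. The implication (b) $\Rightarrow$ (c) is trivial. The only substantive step is (c) $\Rightarrow$ (a), which I would prove by contraposition: assuming $M\in S_\rho^-$, i.e. $\emptyset\neq M\neq\Delta_\rho$, I must show $\partial^0_\rho M\neq\emptyset$ \emph{and} $\partial^1_\rho M\neq\emptyset$. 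Pick any $x\in M$ and any $z\in M^c\cap\Delta_\rho$ (both nonempty). Apply Lemma \ref{path:from:M:to:Mc:hits:boundaries}(a) to the straight path $\phi(\,\cdot\,;x,z;\rho)$: it yields an index $\ell$ with $\phi(\ell)\in\partial^0_\rho M$ and $\phi(\ell+1)\in\partial^1_\rho M$, so both boundary layers are nonempty. This simultaneously establishes that $\neg(a)$ implies $\neg(c)$ in the strong form "both nonempty", which is exactly what is needed for the "In particular" clause.

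For the two displayed equivalences: \eqref{Delta:rho:equiv:empty:partial} is literally (a) $\Leftrightarrow$ (b). For \eqref{S:minus:equiv:nonempty:partial}, note $M\in S_\rho^-$ means $M\neq\Delta_\rho$ (since $M\in S_\rho$ already gives $M\neq\emptyset$); by the equivalence (a) $\Leftrightarrow$ (c) this is equivalent to $\neg(\partial^0_\rho M=\emptyset\text{ or }\partial^1_\rho M=\emptyset)$, i.e. to $\partial^0_\rho M\neq\emptyset\neq\partial^1_\rho M$.

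I do not anticipate a genuine obstacle here — the lemma is flagged as essentially trivial, and Lemma \ref{path:from:M:to:Mc:hits:boundaries}(a) does all the real work. The only point requiring a little care is making sure the contrapositive of (c) $\Rightarrow$ (a) is proved in the quantitatively sharp form (both layers nonempty, not just their conjunction negated) so that \eqref{S:minus:equiv:nonempty:partial} drops out cleanly; using the path lemma, which produces a point in \emph{each} layer at once, takes care of this automatically.
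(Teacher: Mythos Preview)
Your proposal is correct and follows essentially the same approach as the paper: the cycle (a) $\Rightarrow$ (b) $\Rightarrow$ (c) $\Rightarrow$ (a), with the only nontrivial step handled by Lemma~\ref{path:from:M:to:Mc:hits:boundaries}(a). The paper phrases (c) $\Rightarrow$ (a) as a direct argument by contradiction rather than your explicit contrapositive, but the content is identical.
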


\begin{proof}
Statement a) implies that $M^c\cap\Delta_\rho=\emptyset$, which, in view of \eqref{def:partial:1}
and \eqref{def:partial:3}, implies b).
Statement b) clearly implies c).
Now assume that c) holds.
Since $M\in S_\rho$, there exists $x\in M$.
If there exists $z\in M^c\cap\Delta_\rho$, then Lemma \ref{path:from:M:to:Mc:hits:boundaries}a)
yields $\partial^0_\rho M\neq\emptyset\neq\partial^1_\rho M$.
This is false, so $M^c\cap\Delta_\rho=\emptyset$, which implies a).
The equivalences \eqref{S:minus:equiv:nonempty:partial} and \eqref{Delta:rho:equiv:empty:partial}
follow from a) through c).
\end{proof}

We investigate definitions \eqref{def:partial:1} and \eqref{def:partial:3} for $k=1$
under complementation.
%An illustration is provided in Figure \ref{fig:complement}.

\begin{lemma}\label{lem:simple:opposites}
Let $\rho>0$ and $M\in S_\rho^+$.
Then 
\begin{align}
&M=(M^c\cap\Delta_\rho)^c\cap\Delta_\rho,\label{double:complement}\\
&\partial^0_\rho M=\partial^1_\rho(M^c\cap\Delta_\rho),\label{d0d1}\\
&\partial^1_\rho M=\partial^0_\rho(M^c\cap\Delta_\rho)\label{d1d0}.
\end{align}
\end{lemma}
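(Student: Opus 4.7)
My plan is to prove all three equalities in one sweep by first obtaining a fully symmetric characterization of the two boundary layers, so that \eqref{d0d1} and \eqref{d1d0} reduce to a simple renaming.

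First I would dispatch \eqref{double:complement} by a one-line set-theoretic computation: since $M\subset\Delta_\rho$, we have
\[(M^c\cap\Delta_\rho)^c\cap\Delta_\rho=\Delta_\rho\setminus(\Delta_\rho\setminus M)=M.\]
Next I would handle the two degenerate cases $M=\emptyset$ and $M=\Delta_\rho$ for \eqref{d0d1} and \eqref{d1d0} separately. In each case both sides are empty: this is immediate from \eqref{def:partial:1} and \eqref{def:partial:3} on one side, and from \eqref{Delta:rho:equiv:empty:partial} in Lemma \ref{alternative} applied to the complement on the other. From now on I assume $M\in S_\rho^-$ and write $N:=M^c\cap\Delta_\rho\in S_\rho^-$, so that \eqref{double:complement} gives $N^c\cap\Delta_\rho=M$.

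The key step is to re-express $\partial^1_\rho M$ in a form that is visibly symmetric to \eqref{def:partial:1}. I claim
\[\partial^1_\rho M=\{z\in M^c\cap\Delta_\rho:\exists\,x\in M\ \text{with}\ \|x-z\|_\infty=\rho\}.\]
For $\subset$: if $z\in\partial^1_\rho M$, then Lemma \ref{neighbour:in:boundary} yields $x\in\partial^0_\rho M\cap B_\rho(z)\subset M$, and since $x\neq z$ lie in $\Delta_\rho$ we must have $\|x-z\|_\infty=\rho$. For $\supset$: if such an $x$ exists, then \eqref{def:partial:1} forces $x\in\partial^0_\rho M$, so $\dist(z,\partial^0_\rho M)\le\rho$; the reverse inequality holds because $z\in M^c$ while $\partial^0_\rho M\subset M$ and any two distinct grid points are at $\|\cdot\|_\infty$-distance at least $\rho$.

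With this characterization, the symmetry is transparent: swapping the roles of $M$ and $N=M^c\cap\Delta_\rho$ (which swaps $M$ and $N^c\cap\Delta_\rho=M$ in the opposite direction) turns the definition \eqref{def:partial:1} of $\partial^0_\rho M$ into the above description of $\partial^1_\rho N$, and vice versa. Concretely,
\[\partial^1_\rho N=\{z\in N^c\cap\Delta_\rho:\exists\,x\in N,\ \|x-z\|_\infty=\rho\}
=\{z\in M:\exists\,x\in M^c\cap\Delta_\rho,\ \|x-z\|_\infty=\rho\}=\partial^0_\rho M,\]
which is \eqref{d0d1}. Applying \eqref{d0d1} with $N$ in place of $M$ and invoking \eqref{double:complement} yields \eqref{d1d0}. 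The only subtle point — and the mild obstacle — is the equivalent reformulation of $\partial^1_\rho M$ above; everything else is either a renaming of variables or a direct appeal to Lemma \ref{neighbour:in:boundary} and Lemma \ref{alternative}.
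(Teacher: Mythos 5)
Your proof is correct, and it is built from the same ingredients as the paper's: identity \eqref{double:complement}, the definitions \eqref{def:partial:1}--\eqref{def:partial:3}, Lemma \ref{alternative} for the degenerate cases, and Lemma \ref{neighbour:in:boundary} for the nontrivial direction. The one genuine organizational difference is that you first extract the explicit, self-contained characterization
\[\partial^1_\rho M=\{z\in M^c\cap\Delta_\rho:\ \exists\,x\in M\ \text{with}\ \|x-z\|_\infty=\rho\},\]
which is manifestly the image of \eqref{def:partial:1} under the swap $M\leftrightarrow M^c\cap\Delta_\rho$, so that both \eqref{d0d1} and \eqref{d1d0} fall out by renaming. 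The paper instead proves \eqref{d0d1} directly as two element-wise inclusions (which are, underneath, exactly the two directions of your reformulation, using the same neighbour argument and Lemma \ref{neighbour:in:boundary}) and then obtains \eqref{d1d0} by substituting $M^c\cap\Delta_\rho$ for $M$ and applying \eqref{double:complement}, just as you do. So the routes are essentially the same; your packaging simply makes the underlying $M\leftrightarrow M^c\cap\Delta_\rho$ symmetry visible in one place, at the small cost of proving an intermediate lemma that the paper leaves implicit.

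One tiny point worth double-checking: in your $\subset$ direction you invoke Lemma \ref{neighbour:in:boundary} to produce $x\in\partial^0_\rho M\cap B_\rho(z)$, but then you only need $x\in M$; in fact the definition \eqref{def:partial:3} with $k=1$ directly hands you a point of $\partial^0_\rho M\subset M$ at distance $\rho$, so the appeal to the lemma is optional. This does not affect correctness, only economy.
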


\begin{proof}
By the distributive law and by De Morgan's laws, we have
\[
M=M\cap\Delta_\rho
=(M\cap\Delta_\rho)\cup(\Delta_\rho^c\cap\Delta_\rho)
=(M\cup\Delta_\rho^c)\cap\Delta_\rho
=(M^c\cap\Delta_\rho)^c\cap\Delta_\rho,
\]
which is \eqref{double:complement}.
If $M=\emptyset$ or 
$M=\Delta_\rho$, then, by \eqref{Delta:rho:equiv:empty:partial}, \eqref{def:partial:1} 
and \eqref{def:partial:3}, we have 
\[
\partial^0_\rho M
=\partial^1_\rho M
=\partial^0_\rho(M^c\cap\Delta_\rho)
=\partial^1_\rho(M^c\cap\Delta_\rho)
=\emptyset,
\]
and statements \eqref{d0d1} and \eqref{d1d0} are trivial.
Now let $M\in S_\rho^-$.
We check \eqref{d0d1}.

\medskip

Let $x\in\partial^0_\rho M$.
Then by \eqref{double:complement}, 
we have $x\in M=(M^c\cap\Delta_\rho)^c\cap\Delta_\rho$, and by \eqref{def:partial:1} 
there exists $z\in M^c\cap\Delta_\rho$ with $\|x-z\|_\infty=\rho$.
Again by \eqref{def:partial:1}, we have $z\in\partial^0_\rho(M^c\cap\Delta_\rho)$, 
and it follows that $x\in\partial^1_\rho(M^c\cap\Delta_\rho)$.

\medskip

Conversely, let $x\in\partial^1_\rho(M^c\cap\Delta_\rho)$.
Then \eqref{def:partial:3} and \eqref{double:complement} imply $x\in M$, 
%and we have $\dist(x,\partial^0_\rho(M^c\cap\Delta_\rho))_\infty=\rho$.
%In particular, there exists $z\in\partial^0_\rho(M^c\cap\Delta_\rho)$
and by Lemma \ref{neighbour:in:boundary} there exists $z\in\partial^0_\rho(M^c\cap\Delta_\rho)$
with $\|x-z\|_\infty=\rho$.
Since $z\in M^c\cap\Delta_\rho$, it follows that $x\in\partial^0_\rho M$.

\medskip

All in all, statement \eqref{d0d1} holds.
Statement \eqref{d1d0} follows from \eqref{d0d1} with $M^c\cap\Delta_\rho$ in lieu of $M$,
and with \eqref{double:complement}.
%statements \eqref{double:complement} and \eqref{d0d1}.
\end{proof}

It is possible to distinguish between points in $M$ and $M^c\cap\Delta_\rho$
when only $\partial_\rho^0M$ and $\partial_\rho^1M$ are known.
%Figures \ref{fig:complement}(a) and \ref{fig:complement}(b) may support
%the reader's intuition for statements \eqref{coarse:eq:xinMdist} through
%\eqref{opposite:coarse:eq:xdists}.

\begin{theorem}\label{prop:xdists}
Let $\rho>0$, and let $M\in S_\rho^-$.
Then
\begin{align}
&\forall\,x\in M:
&&\dist(x,M^c\cap\Delta_\rho)=\dist(x,\partial_\rho^1M),&
\label{coarse:eq:xinMdist}\\
&\forall\,z\in M^c\cap\Delta_\rho:
&&\dist(z,M)=\dist(z,\partial_\rho^0M),&\label{coarse:eq:xnotinMdist}
\end{align}
and we have
\begin{align}
&\forall\,x\in\Delta_\rho:
&&(x\in M)\iff(\dist(x,\partial_\rho^0M)<\dist(x,\partial_\rho^1M)),&
\label{coarse:eq:xdists}\\
&\forall\,z\in\Delta_\rho:
&&(z\in M^c\cap\Delta_\rho)\iff(\dist(z,\partial_\rho^1M)<\dist(z,\partial_\rho^0M)).&
\label{opposite:coarse:eq:xdists}
\end{align}
\end{theorem}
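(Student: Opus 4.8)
The plan is to prove the two distance identities first and then derive the two characterizations from them. For \eqref{coarse:eq:xinMdist}, fix $x\in M$. Since $M\in S_\rho^-$, the set $M^c\cap\Delta_\rho$ is nonempty, so $\dist(x,M^c\cap\Delta_\rho)$ is a finite positive multiple of $\rho$; pick $z\in M^c\cap\Delta_\rho$ realizing it, i.e.\ $\|x-z\|_\infty=\dist(x,M^c\cap\Delta_\rho)$. Now apply Lemma~\ref{path:from:M:to:Mc:hits:boundaries}c) to the pair $x,z$: it gives $z\in\partial^1_\rho M$ together with $\phi(L(\phi)-1)\in\partial^0_\rho M$ and, crucially for part~b)'s analogue, the distance bookkeeping. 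Since $z\in\partial^1_\rho M\subset M^c\cap\Delta_\rho$, we get $\dist(x,\partial^1_\rho M)\le\|x-z\|_\infty=\dist(x,M^c\cap\Delta_\rho)$; the reverse inequality is immediate from $\partial^1_\rho M\subset M^c\cap\Delta_\rho$. This proves \eqref{coarse:eq:xinMdist}. The identity \eqref{coarse:eq:xnotinMdist} is the symmetric statement: fix $z\in M^c\cap\Delta_\rho$, pick $x\in M$ with $\|z-x\|_\infty=\dist(z,M)$, apply Lemma~\ref{path:from:M:to:Mc:hits:boundaries}b) to obtain $x\in\partial^0_\rho M$, and conclude $\dist(z,\partial^0_\rho M)\le\|z-x\|_\infty=\dist(z,M)$ against the trivial reverse inequality $\partial^0_\rho M\subset M$. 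Alternatively one could derive \eqref{coarse:eq:xnotinMdist} from \eqref{coarse:eq:xinMdist} by passing to $M^c\cap\Delta_\rho$ and invoking Lemma~\ref{lem:simple:opposites}, but the direct argument is cleaner.

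For the characterization \eqref{coarse:eq:xdists}, fix $x\in\Delta_\rho$ and split into the two cases $x\in M$ and $x\in M^c\cap\Delta_\rho$. If $x\in M$, then $\dist(x,\partial^0_\rho M)=0$ when $x\in\partial^0_\rho M$, and otherwise $\partial^0_\rho M\subset M\setminus\{x\}$ while $\partial^1_\rho M\subset M^c\cap\Delta_\rho$ forces $\dist(x,\partial^1_\rho M)\ge\rho>0=\dist(x,x)\ge$ — more carefully: since $\partial^0_\rho M\subset M$ we have $\dist(x,\partial^0_\rho M)$ is either $0$ or else realized at some grid point of $M$ at distance $\ge\rho$; meanwhile by \eqref{coarse:eq:xinMdist}, $\dist(x,\partial^1_\rho M)=\dist(x,M^c\cap\Delta_\rho)\ge\rho$. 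The key observation is that for $x\in M$, any nearest point of $\partial^1_\rho M$ lies strictly outside $M$ whereas $x$ itself lies in $M$; one shows $\dist(x,\partial^0_\rho M)<\dist(x,\partial^1_\rho M)$ by checking that if $z\in\partial^1_\rho M$ is nearest to $x$, then the path $\phi(\,\cdot\,;x,z;\rho)$ of Lemma~\ref{lem:grid:line} meets $\partial^0_\rho M$ at its penultimate point (Lemma~\ref{path:from:M:to:Mc:hits:boundaries}a) applied along this path, using that $\|x-z\|_\infty=\dist(x,M^c\cap\Delta_\rho)$ so that Lemma~\ref{path:from:M:to:Mc:hits:boundaries}c) applies), giving a point of $\partial^0_\rho M$ at distance $\|x-z\|_\infty-\rho<\|x-z\|_\infty=\dist(x,\partial^1_\rho M)$. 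Conversely if $x\in M^c\cap\Delta_\rho$, the symmetric argument via \eqref{coarse:eq:xnotinMdist} and Lemma~\ref{path:from:M:to:Mc:hits:boundaries}b) yields $\dist(x,\partial^1_\rho M)<\dist(x,\partial^0_\rho M)$. Since the two right-hand inequalities in \eqref{coarse:eq:xdists} and \eqref{opposite:coarse:eq:xdists} are mutually exclusive and exhaustive (one always holds, never both — no grid point is equidistant because these distances differ by exactly $\rho$ along the witnessing path), the two equivalences drop out simultaneously, and \eqref{opposite:coarse:eq:xdists} is just the contrapositive of \eqref{coarse:eq:xdists} using the partition $\Delta_\rho=M\sqcup(M^c\cap\Delta_\rho)$.

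The main obstacle I anticipate is the strictness in \eqref{coarse:eq:xdists}: showing not merely $\dist(x,\partial^0_\rho M)\le\dist(x,\partial^1_\rho M)$ for $x\in M$, but strict inequality. The clean way is exactly the penultimate-point trick above — take $z\in\partial^1_\rho M$ realizing $\dist(x,\partial^1_\rho M)$, note $\|x-z\|_\infty=\dist(x,M^c\cap\Delta_\rho)$ by \eqref{coarse:eq:xinMdist}, so Lemma~\ref{path:from:M:to:Mc:hits:boundaries}c) hands us $\phi(L(\phi)-1)\in\partial^0_\rho M$ with $\|\phi(L(\phi)-1)-x\|_\infty=\|x-z\|_\infty-\rho$, strictly smaller. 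One must also handle the degenerate possibility that no realizing $z$ exists, but this cannot happen since $M^c\cap\Delta_\rho\neq\emptyset$ and, by Lemma~\ref{alternative} / \eqref{S:minus:equiv:nonempty:partial}, $\partial^1_\rho M\neq\emptyset$, so all distances in sight are finite; and the case $x\in\partial^0_\rho M$ (where the left side is $0$) is trivial since then the right side is at least $\rho$ by \eqref{coarse:eq:xinMdist}.
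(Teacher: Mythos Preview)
Your proposal is correct and follows essentially the same approach as the paper: both arguments hinge on Lemma~\ref{path:from:M:to:Mc:hits:boundaries}, first using parts b)/c) to show a nearest point of $M$ (resp.\ $M^c\cap\Delta_\rho$) lies in $\partial^0_\rho M$ (resp.\ $\partial^1_\rho M$), and then reusing the same lemma to extract the penultimate path point, which yields the strict inequality. The only cosmetic differences are that the paper proves \eqref{coarse:eq:xnotinMdist} first and obtains \eqref{coarse:eq:xinMdist} by complementation via Lemma~\ref{lem:simple:opposites}, and likewise establishes the strict inequality for $z\in M^c\cap\Delta_\rho$ first and complements, whereas you do both sides directly.
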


\begin{proof}
First recall from \eqref{S:minus:equiv:nonempty:partial} that
$\partial^0_\rho M\neq\emptyset\neq\partial^1_\rho M$.

\medskip

We prove statement \eqref{coarse:eq:xnotinMdist}.
Let $z\in M^c\cap\Delta_\rho$ and take $x\in M$ with $\|z-x\|_\infty=\dist(z,M)$.
By Lemma \ref{path:from:M:to:Mc:hits:boundaries}b), 
we have $x\in\partial^0_\rho M$, and hence
\[
\dist(z,\partial^0_\rho M)
\le\|z-x\|_\infty
=\dist(z,M).
\]
Since $\partial^0_\rho M\subset M$ implies $\dist(z,M)\le\dist(z,\partial_\rho^0M)$,
statement \eqref{coarse:eq:xnotinMdist} holds.

\medskip

Since $M\in S_\rho^-$, we have $M^c\cap\Delta_\rho\in S_\rho^-$.
Hence statement \eqref{coarse:eq:xinMdist} follows from statement
\eqref{coarse:eq:xnotinMdist} with $M^c\cap\Delta_\rho$ in lieu of $M$
by using %the identities 
\eqref{double:complement} and \eqref{d1d0}.

\medskip

Now we prove that 
\begin{equation}\label{partial:result}
(z\in M^c\cap\Delta_\rho)\Rightarrow(\dist(z,\partial_\rho^1M)<\dist(z,\partial_\rho^0M)).
\end{equation}
Let $z\in M^c\cap\Delta_\rho$ and take $x\in\partial_\rho^0M$ with 
$\|z-x\|_\infty=\dist(x,\partial_\rho^0M)$.
By \eqref{coarse:eq:xnotinMdist}, we have
%\[\|z-x\|_\infty=\dist(x,\partial_\rho^0M)=\dist(z,M),\]
$\|z-x\|_\infty=\dist(z,M)$,
so by Lemma \ref{path:from:M:to:Mc:hits:boundaries}b), there exists
$y\in\partial^1_\rho M$ with $\|z-y\|_\infty=\|z-x\|_\infty-\rho$.
It follows, as desired, that
\[
\dist(z,\partial^1_\rho M)
\le\|z-y\|_\infty
%=\|z-x\|_\infty-\rho
<\|z-x\|_\infty
=\dist(z,\partial^0_\rho M).
\]

%Since $M\in S_\rho^-$, we have $M^c\cap\Delta_\rho\in S_\rho^-$, and 
We may read statement \eqref{partial:result} with $M^c\cap\Delta_\rho$ in lieu of $M$.
Applying statements \eqref{double:complement}, \eqref{d0d1} and \eqref{d1d0} yields
\begin{equation}\label{other:partial:result}
(x\in M)\Rightarrow(\dist(x,\partial_\rho^0M)<\dist(x,\partial_\rho^1M)).
\end{equation}
Combining statements \eqref{partial:result} and \eqref{other:partial:result}
yields statements \eqref{coarse:eq:xdists} and \eqref{opposite:coarse:eq:xdists}.
\end{proof}

Statements \eqref{d0d1} and \eqref{d1d0} generalize to arbitrary indices.

\begin{proposition}
Let $\rho>0$ and $M\in S_\rho^-$.
Then 
\begin{equation}\label{flip:point:of:viev}
\partial_\rho^kM=\partial_\rho^{1-k}(M^c\cap\Delta_\rho)\quad\forall\,k\in\Z.
\end{equation}
\end{proposition}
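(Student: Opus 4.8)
The statement \eqref{flip:point:of:viev} extends \eqref{d0d1} and \eqref{d1d0} (which are the cases $k=0$ and $k=1$) to all integer indices. My plan is to unfold the definitions \eqref{def:partial:3} and \eqref{def:partial:4} and reduce everything to the already-proven relations $\partial_\rho^0 M=\partial_\rho^1(M^c\cap\Delta_\rho)$ and $\partial_\rho^1 M=\partial_\rho^0(M^c\cap\Delta_\rho)$, together with Theorem \ref{prop:xdists}. Write $N:=M^c\cap\Delta_\rho$; since $M\in S_\rho^-$ we also have $N\in S_\rho^-$, and by \eqref{double:complement} we have $N^c\cap\Delta_\rho=M$, so the roles of $M$ and $N$ are symmetric. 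The index shift $k\mapsto 1-k$ reflects the integers about $1/2$, exactly swapping the "inner" indices $-k\le -1$ with the "outer" indices $k+1\ge 1$, which is what one expects geometrically: the inner layers of $M$ are the outer layers of $N$ (offset by one because $\partial^0$ sits on the $M$-side of the interface).

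First I would dispose of the two cases $k\le 0$ and $k\ge 2$; the boundary cases $k\in\{0,1\}$ are \eqref{d0d1} and \eqref{d1d0}. For $k\ge 1$, by \eqref{def:partial:3} we have $\partial_\rho^k M=\{z\in N:\dist(z,\partial_\rho^0 M)=k\rho\}$. Using \eqref{d0d1} this is $\{z\in N:\dist(z,\partial_\rho^1 N)=k\rho\}$. Now I invoke \eqref{coarse:eq:xinMdist} of Theorem \ref{prop:xdists} applied to $N$: for $z\in N$ we have $\dist(z,N^c\cap\Delta_\rho)=\dist(z,\partial_\rho^1 N)$, i.e. $\dist(z,M)=\dist(z,\partial_\rho^1 N)$. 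Hence $\partial_\rho^k M=\{z\in N:\dist(z,M)=k\rho\}$, which by \eqref{def:partial:4} (read for $N$, with $N^c\cap\Delta_\rho=M$) is precisely $\partial_\rho^{-k}N=\partial_\rho^{1-(k+1)}N$. Wait — I must track the index carefully: \eqref{def:partial:4} says $\partial_\rho^{-j}N=\{x\in N:\dist(x,\partial_\rho^0 N)=j\rho\}$, and $\partial_\rho^0 N$ is not $M$; rather, by \eqref{coarse:eq:xnotinMdist} applied to $N$, for $x\in N$ one has $\dist(x,\partial_\rho^0 N)=\dist(x,N^c\cap\Delta_\rho)=\dist(x,M)$. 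So indeed $\{z\in N:\dist(z,M)=k\rho\}=\partial_\rho^{-k}N$. Therefore $\partial_\rho^k M=\partial_\rho^{-k}N$ for $k\ge 1$; but $-k=1-(k+1)$, so this is $\partial_\rho^{1-(k+1)}N$, which is off by one from the claim. This signals that I should recheck the indexing convention: $\partial_\rho^1 M$ is the first layer \emph{outside} $M$ and $\partial_\rho^{-1}M$ the first layer \emph{inside}, so the interface between $M$ and $N$ has the index-$0$ layer of $M$ on one side and the index-$0$ layer of $N$ on the other, and these are at distance $\rho$; that accounts for the "$1-k$" rather than "$-k$". The clean way is: for $k\ge 1$, $\partial_\rho^k M=\{z\in N:\dist(z,\partial_\rho^0 M)=k\rho\}$, and since $\dist(z,\partial_\rho^0 M)=\dist(z,M)+\rho$? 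No. The honest route is to just carry Theorem \ref{prop:xdists} through cleanly; I expect the bookkeeping to resolve to exactly $\partial_\rho^{1-k}N$ once one notes $\partial_\rho^0 N\subset N$ lies at distance exactly $\rho$ from $\partial_\rho^0 M$ along shortest paths (Lemma \ref{path:from:M:to:Mc:hits:boundaries}).

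For the remaining range $k\le 0$, I would apply the case just proven with $N$ in place of $M$: for $1-k\ge 1$ we get $\partial_\rho^{1-k}N=\partial_\rho^{1-(1-k)}(N^c\cap\Delta_\rho)=\partial_\rho^{k}M$, using \eqref{double:complement}. This symmetry argument handles all negative and zero $k$ once the positive case is settled, so no separate work is needed.

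The main obstacle is the index bookkeeping near the interface — making sure the off-by-one between "layer $0$ of $M$" and "layer $0$ of $N$" is accounted for so that the shift is $k\mapsto 1-k$ and not $k\mapsto -k$. Concretely, the key identity behind this is that for $z\in N$ one has $\dist(z,\partial_\rho^0 M)=k\rho \iff \dist(z,\partial_\rho^0 N)=(k-1)\rho$ for $k\ge 1$, which follows from \eqref{coarse:eq:xnotinMdist}, \eqref{double:complement}, and Lemma \ref{path:from:M:to:Mc:hits:boundaries} (a shortest path from $z$ to $\partial_\rho^0 M$ passes through $\partial_\rho^0 N=\partial_\rho^1 M$ one step earlier). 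Once this is in hand, \eqref{flip:point:of:viev} drops out by substituting into \eqref{def:partial:3} and \eqref{def:partial:4} and using \eqref{d0d1}, \eqref{d1d0} for the endpoints.
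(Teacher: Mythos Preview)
Your plan is correct and essentially matches the paper's proof: the paper also treats $k\in\{0,1\}$ as base cases via \eqref{d0d1}--\eqref{d1d0}, proves $k>1$ by establishing (via Lemma~\ref{path:from:M:to:Mc:hits:boundaries}b and Lemma~\ref{neighbour:in:boundary}) exactly the off-by-one identity you isolate at the end, namely $\dist(z,\partial_\rho^0 M)=k\rho\iff\dist(z,\partial_\rho^1 M)=(k-1)\rho$ for $z\in M^c\cap\Delta_\rho$, and then obtains $k<0$ by the same symmetry argument (swap $M$ with $M^c\cap\Delta_\rho$ and use \eqref{double:complement}). Your initial detour landing on $\partial_\rho^{-k}N$ is just the bookkeeping slip you already caught: once you feed $\partial_\rho^0 N=\partial_\rho^1 M$ and the off-by-one relation into \eqref{def:partial:4} for $N$, you get $\partial_\rho^{-(k-1)}N=\partial_\rho^{1-k}N$ as required.
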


\begin{proof}
When $k=0$, statement \eqref{flip:point:of:viev} is just statement \eqref{d0d1},
and when $k=1$, statement \eqref{flip:point:of:viev} is just \eqref{d1d0}.

\medskip

Let $k>1$. 
It follows from \eqref{def:partial:3} and \eqref{coarse:eq:xnotinMdist} that
\begin{equation}\label{other:representation:d:k}
\partial^k_\rho M=\{z\in M^c\cap\Delta_\rho:\dist(z,M)=k\rho\},
\end{equation}
and we know %, from \eqref{d1d0} when $k=1$ and 
from \eqref{def:partial:4} and \eqref{d1d0} 
%when $k>1$, 
that
\begin{equation}\label{other:representation:d:1-k}
\partial^{1-k}_\rho(M^c\cap\Delta_\rho)
=\{z\in M^c\cap\Delta_\rho:\dist(z,\partial^1_\rho M)=(k-1)\rho\}.
\end{equation}

\medskip

We show $\partial_\rho^kM\subset\partial_\rho^{1-k}(M^c\cap\Delta_\rho)$.
Let $z\in\partial^k_\rho M$.
%Then by \eqref{def:partial:3}, 
There exists $x\in\partial^0_\rho M$ with $\|z-x\|_\infty=\dist(z,\partial^0_\rho M)$.
Statements \eqref{coarse:eq:xnotinMdist} and \eqref{other:representation:d:k} yield
\begin{equation}\label{local:1}
\|z-x\|_\infty=\dist(z,\partial^0_\rho M)=\dist(z,M)=k\rho.
\end{equation}
By Lemma \ref{path:from:M:to:Mc:hits:boundaries}b), there exists
$y\in\partial^1_\rho M$ with $\|z-y\|_\infty=\|z-x\|_\infty-\rho$, so that
\[
\dist(z,\partial^1_\rho M)
\le\|z-y\|_\infty
=(k-1)\rho.
\]
Assume that there exists $z'\in\partial^1_\rho M$ with $\|z-z'\|_\infty<(k-1)\rho$.
Then, by Lemma \ref{neighbour:in:boundary}, there exists $x'\in\partial^0_\rho M$ with 
$\|z'-x'\|_\infty=\rho$.
It follows that 
\[\dist(z,\partial^0_\rho M)
\le\|z-z'\|_\infty+\|z'-x'\|_\infty<k\rho,\]
which contradicts \eqref{local:1}.
All in all, we have $\dist(z,\partial^1_\rho M)=(k-1)\rho$, and in view of 
\eqref{other:representation:d:1-k}, we have confirmed that 
$z\in\partial^{1-k}_\rho(M^c\cap\Delta_\rho)$.

\medskip

Now we show that $\partial_\rho^{1-k}(M^c\cap\Delta_\rho)\subset\partial_\rho^kM$.
Let $z\in\partial_\rho^{1-k}(M^c\cap\Delta_\rho)$.
By the triangle inequality, by \eqref{other:representation:d:1-k} and in view of \eqref{def:partial:3}, we have
\[\dist(z,\partial^0_\rho M)
\le\dist(z,\partial^1_\rho M)+\dist(\partial^1_\rho M,\partial^0_\rho M)
=(k-1)\rho+\rho
=k\rho.\]
Assume that $\dist(z,\partial^0_\rho M)<k\rho$.
Take $x\in\partial^0_\rho M$ with $\|z-x\|_\infty=\dist(z,\partial^0_\rho M)$.
By \eqref{coarse:eq:xnotinMdist} we have $\|z-x\|_\infty=\dist(z,M)$,
so by Lemma \ref{path:from:M:to:Mc:hits:boundaries}b), there exists $y\in\partial^1_\rho M$
with $\|z-y\|_\infty=\|z-x\|_\infty-\rho$, and hence
\[
\dist(z,\partial^1_\rho M)
\le\|z-y\|_\infty
=\|z-x\|_\infty-\rho
<(k-1)\rho,
\]
which contradicts \eqref{other:representation:d:1-k}.
Therefore $\dist(z,\partial^0_\rho M)=k\rho$, and hence $z\in\partial^k_\rho M$.

\medskip

Now let $k<0$.
But then $1-k\ge 0$, so we have already proved \eqref{flip:point:of:viev} with $1-k$ 
in lieu of $k$ and with $M^c\cap\Delta_\rho$ in lieu of $M$. 
Applying \eqref{double:complement} yields
\[\partial^{1-k}_\rho(M^c\cap\Delta_\rho)
=\partial^{1-(1-k)}_\rho((M^c\cap\Delta_\rho)^c\cap\Delta_\rho)
=\partial^k_\rho M.\]
\end{proof}

Both $\partial_\rho^kM$ and $\partial_\rho^{-k}M$ can be represented
in terms of the distance to $M$ and $M^c\cap\Delta_\rho$, respectively.

\begin{theorem}\label{thm:alternative:def:dkM}
Let $\rho>0$, and let $M\in S_\rho^-$.
Then 
\begin{align}
&\partial_\rho^kM=\{z\in M^c\cap\Delta_\rho:\dist(z,M)=k\rho\}&&\forall\,k\in\N_1,&
\label{dk:is:dist:k}\\
&\partial_\rho^{-k}M=\{x\in M:\dist(x,M^c\cap\Delta_\rho)=(k+1)\rho\}&&\forall\,k\in\N_0.&
\label{d:-k:is:dist:k+1}
\end{align}
\end{theorem}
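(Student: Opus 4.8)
The statement has two parts. Formula \eqref{dk:is:dist:k} is essentially already available: it is exactly \eqref{other:representation:d:k}, which was derived in the proof of the preceding proposition from \eqref{def:partial:3} and \eqref{coarse:eq:xnotinMdist}. So the plan for \eqref{dk:is:dist:k} is simply to recall that chain of equalities: for $z\in M^c\cap\Delta_\rho$ we have $\dist(z,\partial_\rho^0M)=\dist(z,M)$ by \eqref{coarse:eq:xnotinMdist}, hence the condition $\dist(z,\partial_\rho^0M)=k\rho$ defining $\partial_\rho^kM$ in \eqref{def:partial:3} is equivalent to $\dist(z,M)=k\rho$. That disposes of the first line with essentially no new work, and it covers $k\in\N_1$; note that for $M\in S_\rho^-$ we have $\partial_\rho^0M\neq\emptyset$ by \eqref{S:minus:equiv:nonempty:partial}, so all the distances involved are finite.

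For \eqref{d:-k:is:dist:k+1} the clean route is to reduce to \eqref{dk:is:dist:k} by complementation, exploiting \eqref{flip:point:of:viev}. Since $M\in S_\rho^-$ implies $M^c\cap\Delta_\rho\in S_\rho^-$, I can apply \eqref{dk:is:dist:k} to $M^c\cap\Delta_\rho$ in place of $M$: for every $k'\in\N_1$,
\[
\partial_\rho^{k'}(M^c\cap\Delta_\rho)
=\{x\in (M^c\cap\Delta_\rho)^c\cap\Delta_\rho:\dist(x,M^c\cap\Delta_\rho)=k'\rho\}.
\]
By \eqref{double:complement}, $(M^c\cap\Delta_\rho)^c\cap\Delta_\rho=M$, so the right-hand side is $\{x\in M:\dist(x,M^c\cap\Delta_\rho)=k'\rho\}$. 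On the left-hand side, \eqref{flip:point:of:viev} gives $\partial_\rho^{k'}(M^c\cap\Delta_\rho)=\partial_\rho^{1-k'}M$. Setting $k:=k'-1\in\N_0$, i.e.\ $k'=k+1$ and $1-k'=-k$, this reads
\[
\partial_\rho^{-k}M=\{x\in M:\dist(x,M^c\cap\Delta_\rho)=(k+1)\rho\}\quad\forall\,k\in\N_0,
\]
which is exactly \eqref{d:-k:is:dist:k+1}.

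I do not expect any genuine obstacle here: both halves are bookkeeping on top of results already established (\eqref{coarse:eq:xnotinMdist}, \eqref{flip:point:of:viev}, \eqref{double:complement}) plus the observation that $S_\rho^-$ is closed under grid-complementation. The only point requiring a little care is the index shift in the second part — making sure that as $k$ ranges over $\N_0$ the auxiliary index $k'=k+1$ ranges over exactly $\N_1$, so that \eqref{dk:is:dist:k} is applicable for every relevant value — and confirming that the $k=0$ case of \eqref{d:-k:is:dist:k+1} is consistent with \eqref{def:partial:1}, namely that $\partial_\rho^0M=\{x\in M:\dist(x,M^c\cap\Delta_\rho)=\rho\}$, which is immediate from the definition \eqref{def:partial:1} once one notes $\dist(x,M^c\cap\Delta_\rho)\ge\rho$ for all $x\in M$ whenever $M\in S_\rho^-$.
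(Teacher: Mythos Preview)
Your proposal is correct and follows exactly the same route as the paper: \eqref{dk:is:dist:k} from \eqref{def:partial:3} together with \eqref{coarse:eq:xnotinMdist}, and \eqref{d:-k:is:dist:k+1} by applying \eqref{dk:is:dist:k} to $M^c\cap\Delta_\rho$ and invoking \eqref{flip:point:of:viev} and \eqref{double:complement}. The extra sanity check for $k=0$ is unnecessary since the complementation argument already covers it (via $k'=1$), but it does no harm.
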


\begin{proof}
Statement \eqref{dk:is:dist:k} follows directly from the definition \eqref{def:partial:3} 
and \eqref{coarse:eq:xnotinMdist}, 
and statement \eqref{d:-k:is:dist:k+1} follows from identities \eqref{flip:point:of:viev},
\eqref{dk:is:dist:k} and \eqref{double:complement}.
\end{proof}

Finally, it is possible to recover $\partial_{\rho}^0M$ and $\partial_{\rho}^1M$ from 
supersets.

\begin{lemma}\label{prop:bdrycleaning}
Let $\rho>0$, and let $M\in S_\rho$.
If two sets $H_0,H_1\subset\Delta_\rho$ satisfy
\begin{equation}\label{nested}
\partial_{\rho}^0M\subset H_0\subset M
\quad\text{and}\quad
\partial_\rho^1M\subset H_1\subset M^c\cap\Delta_\rho,
\end{equation}
then we have
\[
\partial_\rho^0M=\{x\in H_0:\dist(x,H_1)=\rho\}\ \;
\text{and}\ \
\partial_\rho^1M=\{z\in H_1:\dist(z,H_0)=\rho\}.
\]
\end{lemma}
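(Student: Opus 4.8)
The plan is to prove the two set equalities directly and symmetrically; by the structure of the hypothesis \eqref{nested}, the second equality follows from the first by passing to $M^c\cap\Delta_\rho$, using \eqref{double:complement}, \eqref{d0d1} and \eqref{d1d0} (note that the hypothesis \eqref{nested} is itself symmetric under the swap $M\leftrightarrow M^c\cap\Delta_\rho$, $H_0\leftrightarrow H_1$). So I would focus on establishing
\[
\partial_\rho^0M=\{x\in H_0:\dist(x,H_1)=\rho\}.
\]
First I would dispose of the degenerate cases: if $M=\Delta_\rho$ then $\partial_\rho^0M=\partial_\rho^1M=\emptyset$ by \eqref{Delta:rho:equiv:empty:partial}, and \eqref{nested} forces $H_1\subset M^c\cap\Delta_\rho=\emptyset$, so $\dist(x,H_1)=\infty\neq\rho$ for every $x\in H_0$ and both sides are empty; likewise if $M^c\cap\Delta_\rho=\emptyset$. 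Hence we may assume $M\in S_\rho^-$, so that $\partial_\rho^0M\neq\emptyset\neq\partial_\rho^1M$ by \eqref{S:minus:equiv:nonempty:partial}.

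For the inclusion ``$\subseteq$'': let $x\in\partial_\rho^0M$. Since $\partial_\rho^0M\subset H_0$, we have $x\in H_0$. By Lemma \ref{neighbour:in:boundary} there is $z\in\partial_\rho^1M$ with $\|x-z\|_\infty=\rho$, and since $\partial_\rho^1M\subset H_1$ this gives $z\in H_1$, so $\dist(x,H_1)\le\rho$. For the reverse, if some $z'\in H_1$ had $\|x-z'\|_\infty<\rho$ then $x=z'$ (grid points at $\infty$-distance $<\rho$ coincide), contradicting $x\in M$, $z'\in M^c\cap\Delta_\rho$ and $M\cap(M^c\cap\Delta_\rho)=\emptyset$; hence $\dist(x,H_1)=\rho$, so $x$ lies in the right-hand set.

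For the inclusion ``$\supseteq$'': let $x\in H_0$ with $\dist(x,H_1)=\rho$. Then $x\in H_0\subset M$, and there is $z\in H_1$ with $\|x-z\|_\infty=\rho$. Since $H_1\subset M^c\cap\Delta_\rho$, we have $z\in M^c\cap\Delta_\rho$, so by \eqref{def:partial:1} we conclude $x\in\partial_\rho^0M$. This completes the first equality; the second follows by the symmetry argument described above. The only mild subtlety — and the step I would be most careful about — is the ``$\subseteq$'' direction: one must check that no point of $H_1$ can be strictly closer to $x$ than $\rho$, which is where the disjointness $\partial_\rho^0M\subset M$ versus $H_1\subset M^c\cap\Delta_\rho$ is used, together with the elementary fact that distinct points of $\Delta_\rho$ are at $\infty$-distance at least $\rho$; everything else is immediate from Lemma \ref{neighbour:in:boundary} and the nesting \eqref{nested}.
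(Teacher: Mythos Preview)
Your proof is correct and follows essentially the same approach as the paper's: both dispose of the degenerate case $M=\Delta_\rho$ first (forcing $H_1=\emptyset$), then for $M\in S_\rho^-$ establish the inclusion $\supseteq$ directly from \eqref{def:partial:1} and the nesting $H_0\subset M$, $H_1\subset M^c\cap\Delta_\rho$, and the inclusion $\subseteq$ via Lemma~\ref{neighbour:in:boundary} together with the observation that no point of $H_1\subset M^c\cap\Delta_\rho$ can be strictly closer than $\rho$ to $x\in M$. The only cosmetic differences are that the paper bounds $\dist(x,H_1)\ge\rho$ via $\dist(x,M^c\cap\Delta_\rho)\ge\rho$ rather than your ``distinct grid points are at distance $\ge\rho$'' argument, and that the paper simply states the second identity is proved similarly, whereas you invoke the complement symmetry \eqref{double:complement}, \eqref{d0d1}, \eqref{d1d0} explicitly; both are equally valid.
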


\begin{proof}
If $M=\Delta_\rho$, then \eqref{Delta:rho:equiv:empty:partial} yields
$\partial_\rho^0M=\partial_\rho^1M=M^c\cap\Delta_\rho=\emptyset$, and \eqref{nested} 
implies that $H_1=\emptyset$.
It follows that
\[
\{x\in H_0:\dist(x,H_1)=\rho\}=\emptyset
\quad\text{and}\quad
\{z\in H_1:\dist(z,H_0)=\rho\}=\emptyset,
\]
so the desired statement holds.

Now let $M\in S_\rho^-$.
We prove that $\partial_\rho^0M=\{x\in H_0:\dist(x,H_1)=\rho\}$.
Let $x\in H_0$ with $\dist(x,H_1)=\rho$. 
Then $x\in M$ by \eqref{nested} and $x\in\partial^0_\rho M$ 
by \eqref{def:partial:1} because 
\[\dist(x,M^c\cap\Delta_\rho)\le\dist(x,H_1)=\rho.\]
Conversely, let $x\in\partial^0_\rho M$.
Then $x\in H_0$ by \eqref{nested}, and by Lemma \ref{neighbour:in:boundary} and \eqref{nested}
%definition of $\partial^0_\rho M$, 
%there exists $z\in M^c\cap\Delta_\rho$ with $\|x-z\|_\infty=\rho$.
%By definition of $\partial^1_\rho M$, we have $z\in\partial^1_\rho M\subset H_1$,
there exists $z\in\partial^1_\rho M\subset H_1$ with $\|x-z\|_\infty=\rho$.
Since $x\in M$, by \eqref{nested} and by the above, we have
\[\rho\le\dist(x,M^c\cap\Delta_\rho)\le\dist(x,H_1)\le\|x-z\|_\infty=\rho,\]
which implies that $\dist(x,H_1)=\rho$.

\medskip

The proof of the identity $\partial_\rho^1M=\{z\in H_1:\dist(z,H_0)=\rho\}$ is similar. 
\end{proof}

\section{Boundary pairs of digital images}\label{sec:boundary:pairs}

The following lemma refines Lemma \ref{alternative}.
Recall %the space 
$\bd_\rho^-$ from Definition \ref{def:boundary:pairs}.

\begin{lemma}\label{Tr:from:S-:to:bd-}
Let $\rho>0$, and let $M\in S_\rho^-$.
Then $(\partial^0_\rho M,\partial^1_\rho M)\in\bd_\rho^-$.
% satisfty axioms \eqref{bdry:axiom:0} to \eqref{bdry:axiom:4}.
\end{lemma}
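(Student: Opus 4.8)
The plan is to verify that the pair $(D_0,D_1):=(\partial^0_\rho M,\partial^1_\rho M)$ satisfies each of the five axioms \eqref{bdry:axiom:0} through \eqref{bdry:axiom:4} from Definition \ref{def:boundary:pairs}. Axiom \eqref{bdry:axiom:0} is exactly the equivalence \eqref{S:minus:equiv:nonempty:partial} from Lemma \ref{alternative}, since $M\in S_\rho^-$. Axiom \eqref{bdry:axiom:1} is immediate because $\partial^0_\rho M\subset M$ while $\partial^1_\rho M\subset M^c\cap\Delta_\rho$ by \eqref{def:partial:1} and \eqref{def:partial:3}. Axioms \eqref{bdry:axiom:2} and \eqref{bdry:axiom:3} are precisely the two directions of Lemma \ref{neighbour:in:boundary} applied to points already known to lie in $\partial^0_\rho M$ and $\partial^1_\rho M$, respectively: if $x\in\partial^0_\rho M$, then by \eqref{def:partial:1} there is $z\in M^c\cap\Delta_\rho$ with $\|x-z\|_\infty=\rho$, and Lemma \ref{neighbour:in:boundary} (or directly \eqref{def:partial:3}) gives $z\in\partial^1_\rho M$; the converse direction is symmetric.

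The substantive part is axiom \eqref{bdry:axiom:4}. Here I would take $x\in\partial^0_\rho M$, $z\in\partial^1_\rho M$, and a path $p\in P_\rho(x,z)$ with $L(p)>1$, and show that $p$ must pass through $\partial^0_\rho M\cup\partial^1_\rho M$ at some intermediate index $\ell\in(0,L(p))$. The idea is to walk along $p$ and track membership in $M$ versus $M^c\cap\Delta_\rho$: since $p(0)=x\in M$ and $p(L(p))=z\in M^c\cap\Delta_\rho$, there is a largest index $\ell\in[0,L(p))$ with $p(\ell)\in M$, whence $p(\ell+1)\in M^c\cap\Delta_\rho$; because consecutive path points are within $\rho$ in the $\infty$-norm, \eqref{def:partial:1} and \eqref{def:partial:3} give $p(\ell)\in\partial^0_\rho M$ and $p(\ell+1)\in\partial^1_\rho M$. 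The only thing to check is that one of $\ell$ or $\ell+1$ actually lies strictly between $0$ and $L(p)$. If $\ell\in(0,L(p))$ we are done with $p(\ell)$; otherwise $\ell=0$, so $p(1)\in\partial^1_\rho M$, and since $L(p)>1$ we have $1\in(0,L(p))$, so $p(1)$ works. This is essentially the argument already used in Lemma \ref{path:from:M:to:Mc:hits:boundaries}a), specialized to a path whose length exceeds $1$.

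I expect the axiom \eqref{bdry:axiom:4} verification to be the main (and only mildly delicate) obstacle, precisely because of the bookkeeping on the endpoints: the axiom demands an interior index, so one must handle the degenerate case where the transition from $M$ to $M^c$ happens at the very first step, and this is where the hypothesis $L(p)>1$ is used. Everything else reduces to previously established lemmas. Finally I would assemble the five verified axioms to conclude $(\partial^0_\rho M,\partial^1_\rho M)\in\bd_\rho^-$.
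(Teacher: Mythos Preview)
Your proposal is correct and follows essentially the same approach as the paper's proof: axioms \eqref{bdry:axiom:0}--\eqref{bdry:axiom:3} are dispatched via Lemma~\ref{alternative}, the containments from \eqref{def:partial:1}--\eqref{def:partial:3}, and Lemma~\ref{neighbour:in:boundary}, while axiom \eqref{bdry:axiom:4} is checked by locating a transition along the path between $M$ and $M^c\cap\Delta_\rho$. The only cosmetic difference is that the paper takes the \emph{smallest} index $\ell$ with $p(\ell)\in M^c\cap\Delta_\rho$ and splits on $\ell=L(p)$ versus $\ell<L(p)$, whereas you take the \emph{largest} index $\ell$ with $p(\ell)\in M$ and split on $\ell>0$ versus $\ell=0$; both case analyses are equivalent and use $L(p)>1$ in the same way.
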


\begin{proof}
By Lemma \ref{alternative}, the sets $\partial^0_\rho M$ and $\partial^1_\rho M$ 
satisfy axiom \eqref{bdry:axiom:0}.
Axiom \eqref{bdry:axiom:1} follows from \eqref{def:partial:1} 
and \eqref{def:partial:3}.
Lemma \ref{neighbour:in:boundary} yields axioms \eqref{bdry:axiom:2}
and \eqref{bdry:axiom:3}.

\medskip

We check axiom \eqref{bdry:axiom:4}.
Let $x\in\partial^0_\rho M$, let $z\in\partial^1_\rho M$, and let 
$p\in P_\rho(x,z)$ with $L(p)>1$.
Since $p(L(p))=z\in\partial^1_\rho M$, there exists a smallest index 
$\ell\in(0,L(p)]$ with $p(\ell)\in M^c\cap\Delta_\rho$.

Case 1: If $\ell=L(p)$, then $L(p)-1\in(0,L(p))$. % and $p(L(p)-1)\in M$.
By Definition \ref{def:concatenate}, we have $\|p(L(p)-1)-p(L(p))\|_\infty\le\rho$,
and since we have $p(L(p)-1)\in M$ and $p(L(p))\in M^c\cap\Delta_\rho$, it follows that 
$\|p(L(p)-1)-p(L(p))\|_\infty=\rho$.
Hence by \eqref{def:partial:1}, we have $p(L(p)-1)\in\partial^0_\rho M$.

Case 2: If $\ell<L(p)$, then $\ell\in(0,L(p))$. 
Again by Definition \ref{def:concatenate}, we have $\|p(\ell-1)-p(\ell)\|_\infty\le\rho$,
and since $p(\ell-1)\in M$ and $p(\ell)\in M^c\cap\Delta_\rho$, it follows that
$\|p(\ell-1)-p(\ell)\|_\infty=\rho$.
Hence by \eqref{def:partial:3}, we have $p(\ell)\in\partial^1_\rho M$.
\end{proof}

Conversely, we check that every pair $(D_0,D_1)\in\bd_\rho$ indeed corresponds to
the pair $(\partial^0_\rho M,\partial^0_\rho M)$ of a set $M\in S_\rho$.

\begin{definition}\label{def:M:D0:D1}
For any two sets $D_0,D_1\in\bd_\rho^- $ %S_\rho$, %subset\Delta_\rho\setminus\{\emptyset\}$, 
we define
\[M(D_0,D_1):=\{x\in\Delta_\rho:\dist(x,D_0)<\dist(x,D_1)\}.\]
\end{definition}

Given a pair $(D_0,D_1)\in\bd_\rho^-$, we can partition $\Delta_\rho$ into 
the disjoint union of the points closer to $D_0$ and the points closer to $D_1$.

\begin{lemma}\label{D0:D1:cover:Delta}
Let $(D_0,D_1)\in\bd_\rho^-$.
Then we have $M(D_0,D_1)\in S_\rho^-$ and $M(D_1,D_0)\in S_\rho^-$
as well as
\[D_0\subset M(D_0,D_1),\quad
D_1\subset M(D_1,D_0),\quad
M(D_1,D_0)=M(D_0,D_1)^c\cap\Delta_\rho.
\]
\end{lemma}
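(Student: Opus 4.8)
The plan is to analyze the function $x\mapsto \dist(x,D_0)-\dist(x,D_1)$ on $\Delta_\rho$ and show it is never zero, which immediately gives the partition $\Delta_\rho = M(D_0,D_1)\sqcup M(D_1,D_0)$. First I would observe that $M(D_0,D_1)$ and $M(D_1,D_0)$ are disjoint by definition, and their union is everything provided no $y\in\Delta_\rho$ satisfies $\dist(y,D_0)=\dist(y,D_1)$. So the crux is to rule out ties. Suppose $y\in\Delta_\rho$ with $\dist(y,D_0)=\dist(y,D_1)=:r\rho$ for some $r\in\N_0$ (the common value is an integer multiple of $\rho$ since $D_0,D_1\subset\Delta_\rho$). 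Pick $x\in D_0$ and $z\in D_1$ realizing these distances, so $\|y-x\|_\infty=\|y-z\|_\infty=r\rho$. If $r=0$ then $y=x=z$, contradicting $D_0\cap D_1=\emptyset$ from axiom \eqref{bdry:axiom:1}. If $r\ge 1$, I would concatenate the grid line $\phi(\cdot;x,y;\rho)$ with $\phi(\cdot;y,z;\rho)$ from Lemma \ref{lem:grid:line} to get a path $p\in P_\rho(x,z)$ of length $2r$, and then invoke axiom \eqref{bdry:axiom:4}: since $L(p)=2r>1$, there is an interior index $\ell\in(0,2r)$ with $p(\ell)\in D_0\cup D_1$. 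But by \eqref{path:stays:in:square} every point of $p$ lies in $B_{r\rho}(y)$, so any such $p(\ell)\in D_0$ would give $\dist(y,D_0)\le\|y-p(\ell)\|_\infty\le r\rho$ — which alone is not yet a contradiction, so this needs sharpening.

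The sharpening: I would show that $p(\ell)$ is in fact \emph{strictly} inside $B_{r\rho}(y)$ for all interior $\ell$, i.e. $\|y-p(\ell)\|_\infty < r\rho$ for $\ell\in(0,2r)$. On the first half ($\ell\in[0,r]$), $\phi(\ell;x,y;\rho)$ has $\|\phi(\ell;x,y;\rho)-y\|_\infty=(r-\ell)\rho$ by \eqref{distances:to:endpoints:1}, which is $<r\rho$ once $\ell\ge 1$; symmetrically, on the second half, $\|\phi(\ell-r;y,z;\rho)-y\|_\infty=(\ell-r)\rho<r\rho$ once $\ell\le 2r-1$; and at $\ell=r$ the concatenated path passes through $y$ itself with $\|y-y\|_\infty=0<r\rho$. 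Hence any interior $p(\ell)\in D_0$ forces $\dist(y,D_0)<r\rho$, and any interior $p(\ell)\in D_1$ forces $\dist(y,D_1)<r\rho$, either way contradicting $r\rho=\dist(y,D_0)=\dist(y,D_1)$. This establishes $M(D_1,D_0)=M(D_0,D_1)^c\cap\Delta_\rho$.

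Next I would prove the inclusions $D_0\subset M(D_0,D_1)$ and $D_1\subset M(D_1,D_0)$. For $x\in D_0$: $\dist(x,D_0)=0$, and $\dist(x,D_1)>0$ because $x\notin D_1$ by \eqref{bdry:axiom:1} and $D_1\subset\Delta_\rho$ is a set of grid points distinct from $x$; hence $x\in M(D_0,D_1)$. The inclusion for $D_1$ is symmetric. Finally, $M(D_0,D_1)\neq\emptyset$ since it contains $D_0\neq\emptyset$ (axiom \eqref{bdry:axiom:0}), and $M(D_0,D_1)\neq\Delta_\rho$ since its complement contains $D_1\neq\emptyset$; thus $M(D_0,D_1)\in S_\rho^-$, and likewise $M(D_1,D_0)\in S_\rho^-$. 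I expect the main obstacle to be the strict-interior argument for ruling out ties — one must be careful that the concatenated grid line genuinely keeps all \emph{interior} vertices strictly within sup-distance $r\rho$ of $y$, using \eqref{distances:to:endpoints:0}–\eqref{distances:to:endpoints:1} on each half rather than the coarser bound \eqref{path:stays:in:square}, and one must handle the small cases ($r=0$, and $r=1$ where $L(p)=2$ so $(0,2r)=\{1\}$ is the single interior index) explicitly.
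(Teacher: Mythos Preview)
Your proposal is correct and follows essentially the same approach as the paper: both argue by contradiction that no tie $\dist(y,D_0)=\dist(y,D_1)=k\rho$ can occur by concatenating the straight paths $\phi(\cdot;x,y;\rho)$ and $\phi(\cdot;y,z;\rho)$, invoking axiom \eqref{bdry:axiom:4} on the length-$2k$ path, and using \eqref{distances:to:endpoints:0}--\eqref{distances:to:endpoints:1} (not the coarser \eqref{path:stays:in:square}) to show every interior vertex lies strictly within $k\rho$ of $y$. The only differences are cosmetic: the paper establishes the inclusions $D_i\subset M(D_i,D_{1-i})$ and the $S_\rho^-$ memberships first, whereas you do them last, and the paper dispatches the $k=0$ case in one line rather than singling it out.
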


\begin{proof}
Definition \ref{def:M:D0:D1} clearly implies that
\begin{equation}\label{loc:10}
M(D_0,D_1)\cap M(D_1,D_0)=\emptyset.
\end{equation}
By \eqref{bdry:axiom:0} and \eqref{bdry:axiom:1}, %By Lemma \ref{alternative}
and by Definition \ref{def:M:D0:D1}, we have 
$\emptyset\neq D_0\subset M(D_0,D_1)$ and $\emptyset\neq D_1\subset M(D_1,D_0)$.
Together with \eqref{loc:10}, this implies 
$M(D_0,D_1)\in S_\rho^-$ and $M(D_1,D_0)\in S_\rho^-$.

\medskip

Assume that there exists $y\in M(D_0,D_1)^c\cap M(D_1,D_0)^c\cap \Delta_\rho$. 
Then Definition \ref{def:M:D0:D1} yields $\dist(y,D_0)=\dist(y,D_1)$, and
there exist $k\in\N$, $x\in D_0$ and $z\in D_1$ with 
\begin{equation}\label{x:equal:distances}
\|x-y\|_\infty=\dist(y,D_0)=k\rho=\dist(y,D_1)=\|y-z\|_\infty.
\end{equation}
Since $D_0\cap D_1=\emptyset$ by axiom \eqref{bdry:axiom:0}, statement \eqref{x:equal:distances}
implies $k>0$.
With $\phi$ as in Lemma \ref{lem:grid:line}, consider the paths
$p_0:=\phi(\,\cdot\,;x,y;\rho)$ and $p_1:=\phi(\,\cdot\,;y,z;\rho)$
with $L(p_0)=L(p_1)=k$.
Since $p:=p_1\circ p_0\in P_\rho(x,z)$ and $L(p)=2k>1$, axiom \eqref{bdry:axiom:4} yields an index
$\ell\in(0,2k)$ with $p(\ell)\in D_0\cup D_1$.
By \eqref{concatenation}, and by statement \eqref{distances:to:endpoints:1} applied to $p_0$ 
and statement \eqref{distances:to:endpoints:0} applied to $p_1$, we have
\begin{align*}
\|p(\ell)-y\|_\infty
=\begin{cases}
\|p_0(\ell)-y\|_\infty,&\ell\in(0,k],\\\|y-p_1(\ell-k)\|_\infty,&\ell\in(k,2k)
\end{cases}
=\begin{cases}
(k-\ell)\rho,&\ell\in(0,k],\\(\ell-k)\rho,&\ell\in(k,2k).
\end{cases}
\end{align*}
Hence $\|p(\ell)-y\|_\infty<k\rho$, which
contradicts \eqref{x:equal:distances}.
Consequently, the initial assumption is false, and we have
\begin{equation}\label{loc:11}
M(D_0,D_1)^c\cap M(D_1,D_0)^c\cap \Delta_\rho=\emptyset.
\end{equation}
Combining \eqref{loc:10} and \eqref{loc:11} yields $M(D_1,D_0)=M(D_0,D_1)^c\cap\Delta_\rho$.
\end{proof}

The restriction of %the mapping 
$\tr_\rho$ from \eqref{def:tr} to $S_\rho^-$ 
is a surjection onto $\bd_\rho^-$. 

\begin{lemma}\label{M:D0:D1:gives:bd}
For all $(D_0,D_1)\in\bd_\rho^-$,
we have $\partial^i_\rho M(D_0,D_1)=D_i$, $i=0,1$.
\end{lemma}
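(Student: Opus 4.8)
The plan is to fix $(D_0,D_1)\in\bd_\rho^-$, write $M:=M(D_0,D_1)$, and verify $\partial^0_\rho M = D_0$ and $\partial^1_\rho M = D_1$. By Lemma \ref{D0:D1:cover:Delta} we already know $M\in S_\rho^-$, that $D_0\subset M$, that $D_1\subset M^c\cap\Delta_\rho$, and that $M^c\cap\Delta_\rho = M(D_1,D_0)$; in particular $\partial^0_\rho M$ and $\partial^1_\rho M$ are nonempty by \eqref{S:minus:equiv:nonempty:partial}. The symmetry $M(D_0,D_1)^c\cap\Delta_\rho = M(D_1,D_0)$ together with \eqref{d0d1}--\eqref{d1d0} means it suffices to prove $\partial^0_\rho M = D_0$; the identity $\partial^1_\rho M = D_1$ then follows by swapping the roles of $D_0$ and $D_1$ and applying \eqref{double:complement}.

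First I would show $D_0\subset\partial^0_\rho M$. Let $x\in D_0$. By axiom \eqref{bdry:axiom:2} there is $z\in D_1$ with $\|x-z\|_\infty=\rho$; since $z\in M^c\cap\Delta_\rho$ and $x\in M$, definition \eqref{def:partial:1} gives $x\in\partial^0_\rho M$. Next I would show $\partial^0_\rho M\subset D_0$. Let $x\in\partial^0_\rho M$, so $x\in M$ and there is $z\in M^c\cap\Delta_\rho$ with $\|x-z\|_\infty=\rho$. From $x\in M=M(D_0,D_1)$ we get $\dist(x,D_0)<\dist(x,D_1)$; from $z\in M^c\cap\Delta_\rho = M(D_1,D_0)$ we get $\dist(z,D_1)<\dist(z,D_0)$. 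The goal is to force $\dist(x,D_0)=0$, i.e.\ $x\in D_0$. Pick $a\in D_0$ with $\|x-a\|_\infty=\dist(x,D_0)=:j\rho$ and $b\in D_1$ with $\|z-b\|_\infty=\dist(z,D_1)=:i\rho$. Then
\[
\|x-a\|_\infty = j\rho,\qquad \|z-b\|_\infty = i\rho,\qquad j\rho < \dist(x,D_1)\le \|x-b\|_\infty \le \|x-z\|_\infty + \|z-b\|_\infty = (i+1)\rho,
\]
and symmetrically $i\rho < (j+1)\rho$, so $|i-j|\le 0$ forces... more carefully, $j\le i$ and $i\le j$, hence $i=j$. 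Now suppose $j\ge 1$. Using the straight path $\phi$ from Lemma \ref{lem:grid:line}, form $p:=\phi(\cdot;b,z;\rho)$ concatenated with a single step from $z$ to $x$ concatenated with $\phi(\cdot;x,a;\rho)$ — a path in $P_\rho(b,a)$ (reading it backwards, in $P_\rho(a,b)$) of length $i + 1 + j = 2j+1 > 1$. By axiom \eqref{bdry:axiom:4} some interior vertex lies in $D_0\cup D_1$. But every interior vertex of the $\phi(\cdot;b,z;\rho)$--segment is within distance $<i\rho$ of... this is where the distance bookkeeping must be made airtight: I must show that no interior vertex of this concatenated path can lie in $D_0$ or $D_1$, using \eqref{path:stays:in:square} and the minimality of $i\rho=j\rho$, thereby contradicting \eqref{bdry:axiom:4} and forcing $j=0$.

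The main obstacle is exactly this last step: choosing the right auxiliary path between an element of $D_0$ and an element of $D_1$ so that axiom \eqref{bdry:axiom:4} bites, and then checking that all interior vertices avoid $D_0\cup D_1$. The cleanest route is probably to argue by contradiction at the level of a \emph{shortest} such configuration: assume $x\in\partial^0_\rho M\setminus D_0$ with $\dist(x,D_0)=j\rho$ minimal among all such counterexamples, build the path $a\to x\to z\to b$ (or a sub-path thereof), invoke \eqref{bdry:axiom:4} to get an interior point $w\in D_0\cup D_1$, and use \eqref{path:stays:in:square} or \eqref{distances:to:endpoints:0}--\eqref{distances:to:endpoints:1} to show $w$ is strictly closer to $x$ (if $w\in D_1$) or yields a smaller counterexample (if $w\in D_0$), contradicting minimality. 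I would also need the elementary fact, recoverable from Lemma \ref{neighbour:in:boundary} applied inside this setup or directly from axioms \eqref{bdry:axiom:2}--\eqref{bdry:axiom:3}, that adjacent boundary-type points behave well under the step-size-$\rho$ path. Everything else — the two inclusions modulo this contradiction, and the passage from $\partial^0$ to $\partial^1$ by complementation — is routine bookkeeping with the results already established.
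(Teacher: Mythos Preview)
Your approach is essentially the paper's: show the easy inclusions $D_i\subset\partial^i_\rho M$ from axioms \eqref{bdry:axiom:2}--\eqref{bdry:axiom:3}, then for $x\in\partial^0_\rho M$ with neighbour $z\in M^c\cap\Delta_\rho$ deduce $\dist(x,D_0)=\dist(z,D_1)=:k\rho$, build a straight-path concatenation $x'\to x\to z\to z'$ with $x'\in D_0$, $z'\in D_1$, and invoke axiom \eqref{bdry:axiom:4}. That is exactly what the paper does.

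The one place where your plan drifts is the endgame. Your proposed ``minimality of $j$'' device for the case $w\in D_0$ does not obviously produce a smaller counterexample: the interior vertex $w$ is in $D_0$, not in $\partial^0_\rho M\setminus D_0$, and on the $z\to z'$ leg you only get $\|w-x\|_\infty\le k\rho$, not a strict inequality, so you do not directly contradict $\dist(x,D_0)=k\rho$ either. The paper's finish is cleaner and fully symmetric: from the chain of inequalities you already wrote down one also gets $\dist(x,D_1)\ge(k+1)\rho$ and $\dist(z,D_0)\ge(k+1)\rho$ (your bounds $j\rho<\dist(x,D_1)$ and $i\rho<\dist(z,D_0)$ plus integrality). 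Then one checks, using \eqref{distances:to:endpoints:0}--\eqref{distances:to:endpoints:1} on each leg, that every interior vertex $p(\ell)$ satisfies \emph{both} $\|p(\ell)-x\|_\infty\le k\rho$ and $\|p(\ell)-z\|_\infty\le k\rho$. Now $p(\ell)\in D_1$ contradicts $\dist(x,D_1)\ge(k+1)\rho$, and $p(\ell)\in D_0$ contradicts $\dist(z,D_0)\ge(k+1)\rho$. No minimality is needed; drop that detour and your proof is complete and matches the paper's.
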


\begin{proof}
Let $M:=M(D_0,D_1)$.
By Lemma \ref{D0:D1:cover:Delta}, we have $M^c\cap\Delta_\rho=M(D_1,D_0)$
as well as $D_0\subset M$ and $D_1\subset M^c\cap\Delta_\rho$.
In view of statements \eqref{def:partial:1} and \eqref{def:partial:3},
properties \eqref{bdry:axiom:2} and \eqref{bdry:axiom:3} yield
$D_i\subset\partial^i_\rho M$, $i=0,1$.

\medskip

Conversely, let $x\in\partial^0_\rho M$.
By Lemma \ref{neighbour:in:boundary}, there exists $z\in\partial^1_\rho M$ 
with $\|x-z\|_\infty=\rho$.
Since
\[x\in\partial^0_\rho M\subset M=M(D_0,D_1)
\quad\text{and}\quad
z\in\partial^1_\rho M\subset M^c\cap\Delta_\rho=M(D_1,D_0),\]
we obtain using Definition \ref{def:M:D0:D1} and the triangle inequality that
\begin{align}
&\dist(x,D_0)<\dist(x,D_1)\le\|x-z\|_\infty+\dist(z,D_1)=\rho+\dist(z,D_1),
\label{dist:x:D0}\\
&\dist(z,D_1)<\dist(z,D_0)\le\|z-x\|_\infty+\dist(x,D_0)=\rho+\dist(x,D_0).
\label{dist:z:D1}
\end{align}
In the following, note that $\dist(x,D_0)\in\rho\N_0$ and $\dist(z,D_1)\in\rho\N_0$.
Combining \eqref{dist:x:D0} and \eqref{dist:z:D1}, we see that 
$\dist(x,D_0)\le\dist(z,D_1)\le\dist(x,D_0)$,
and hence there exists $k\in\N$ with
\begin{equation}\label{D:free:zone:1}
\dist(x,D_0)=\dist(z,D_1)=k\rho.
\end{equation}
Substituting \eqref{D:free:zone:1} back into \eqref{dist:x:D0} and \eqref{dist:z:D1} yields
\begin{align*}
&k\rho<\dist(x,D_1)\le(k+1)\rho,\\
&k\rho<\dist(z,D_0)\le(k+1)\rho,
\end{align*}
which implies
\begin{equation}\label{D:free:zone:2}
\dist(x,D_1)=\dist(z,D_0)=(k+1)\rho.
\end{equation}
Assume that $k>0$.
By \eqref{D:free:zone:1}, there exist $x'\in D_0$ and $z'\in D_1$ with
\[\|x-x'\|_\infty=k\rho=\|z-z'\|_\infty.\]
Let $\phi$ as in Lemma \ref{lem:grid:line}, and consider the  paths
$p_0:=\phi(\,\cdot\,;x',x;\rho)$, $p_1:=(x,z)$ and $p_2:=\phi(\,\cdot\,;z,z';\rho)$
with $L(p_0)=L(p_2)=k$ and $L(p_1)=1$.
Since 
\[
p:=p_2\circ p_1\circ p_0\in P_\rho(x',z')
\quad\text{and}\quad
L(p)=2k+1>1,
\] 
axiom \eqref{bdry:axiom:4} yields an index $\ell\in(0,2k+1)$ with $p(\ell)\in D_0\cup D_1$.
If $\ell\in(0,k]$, then statement \eqref{distances:to:endpoints:1} yields
\[
\|p(\ell)-x\|_\infty
=\|p_0(\ell)-x\|_\infty
\le(k-\ell)\rho
\le k\rho.
\]
Alternatively, if $\ell\in[k+1,2k+1)$, then statement \eqref{distances:to:endpoints:0} yields
\begin{align*}
\|p(\ell)-x\|_\infty
&=\|x-p_2(\ell-k-1)\|_\infty\\
&\le\|x-z\|_\infty+\|z-p_2(\ell-k-1)\|_\infty
\le\rho+(\ell-k-1)\rho
\le k\rho.
\end{align*}
We summarize that
\begin{equation}\label{point:of:view:x}
\|p(\ell)-x\|_\infty\le k\rho,
\end{equation}
and the same computation with reversed roles yields
\begin{equation}\label{point:of:view:z}
\|p(\ell)-z\|_\infty\le k\rho.
\end{equation}
Finally, we observe that when $p(\ell)\in D_0$, then \eqref{point:of:view:z} contradicts
\eqref{D:free:zone:2}, and when $p(\ell)\in D_1$, then \eqref{point:of:view:x} contradicts
\eqref{D:free:zone:2}.
All in all, the assumption $k>0$ is false.
Hence $k=0$, and by \eqref{D:free:zone:1}, we have $x\in D_0$.
This shows $\partial^0_\rho M\subset D_0$.

\medskip

The same argument shows that $\partial^1_\rho M\subset D_1$ holds as well.
\end{proof}

The following theorem is the main result of this section.
Recall the operator $\tr_\rho$ from \eqref{def:tr}.

\begin{theorem}\label{thm:F:well:defined}
We have $\tr_\rho(S_\rho)=\bd_\rho$,
and %the operator 
$\tr_\rho:S_\rho\to\bd_\rho$ is a bijection with inverse
\[\tr_\rho^{-1}(D_0,D_1)
=\begin{cases}M(D_0,D_1),&(D_0,D_1)\in\bd_\rho^-,\\
\Delta_\rho,&(D_0,D_1)=(\emptyset,\emptyset).
\end{cases}\]
\end{theorem}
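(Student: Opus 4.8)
The plan is to assemble the theorem from the lemmas already proved in this section, treating the two cases $(D_0,D_1)\in\bd_\rho^-$ and $(D_0,D_1)=(\emptyset,\emptyset)$ separately throughout. First I would establish the set equality $\tr_\rho(S_\rho)=\bd_\rho$ by double inclusion. For $\tr_\rho(S_\rho)\subset\bd_\rho$: given $M\in S_\rho$, either $M=\Delta_\rho$, in which case \eqref{Delta:rho:equiv:empty:partial} gives $\tr_\rho(M)=(\emptyset,\emptyset)\in\bd_\rho$, or $M\in S_\rho^-$, in which case Lemma \ref{Tr:from:S-:to:bd-} gives $\tr_\rho(M)\in\bd_\rho^-\subset\bd_\rho$. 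For the reverse inclusion $\bd_\rho\subset\tr_\rho(S_\rho)$: the pair $(\emptyset,\emptyset)$ is hit by $\Delta_\rho$ via \eqref{Delta:rho:equiv:empty:partial}, and any $(D_0,D_1)\in\bd_\rho^-$ is hit by $M(D_0,D_1)$, which lies in $S_\rho^-\subset S_\rho$ by Lemma \ref{D0:D1:cover:Delta} and satisfies $\tr_\rho(M(D_0,D_1))=(D_0,D_1)$ by Lemma \ref{M:D0:D1:gives:bd}.

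Next I would prove injectivity of $\tr_\rho$ on $S_\rho$. Suppose $\tr_\rho(M)=\tr_\rho(M')$ for $M,M'\in S_\rho$. If the common value is $(\emptyset,\emptyset)$, then \eqref{Delta:rho:equiv:empty:partial} forces $M=\Delta_\rho=M'$. Otherwise the common value $(D_0,D_1)$ lies in $\bd_\rho^-$ by Lemma \ref{Tr:from:S-:to:bd-} (and \eqref{S:minus:equiv:nonempty:partial} ensures $M,M'\in S_\rho^-$), and Theorem \ref{prop:xdists}, specifically the characterization \eqref{coarse:eq:xdists}, expresses membership in $M$ purely in terms of $(\partial^0_\rho M,\partial^1_\rho M)=(D_0,D_1)$: namely $x\in M\iff\dist(x,D_0)<\dist(x,D_1)$. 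Since the same holds for $M'$ with the same pair, we get $M=M'$. This argument simultaneously identifies the inverse: on $\bd_\rho^-$, the displayed formula $\tr_\rho^{-1}(D_0,D_1)=M(D_0,D_1)$ is exactly Definition \ref{def:M:D0:D1} combined with the fact just used that $M\in S_\rho^-$ is determined by $M=\{x:\dist(x,D_0)<\dist(x,D_1)\}$; and $\tr_\rho^{-1}(\emptyset,\emptyset)=\Delta_\rho$ by \eqref{Delta:rho:equiv:empty:partial}.

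Finally I would remark that bijectivity follows formally from surjectivity (first part) together with injectivity (second part), and that the two pieces of the stated inverse formula are precisely the content of Lemma \ref{M:D0:D1:gives:bd} (showing $\tr_\rho\circ\tr_\rho^{-1}=\mathrm{id}$ on $\bd_\rho^-$) and of Lemma \ref{alternative} via \eqref{Delta:rho:equiv:empty:partial} (handling the $(\emptyset,\emptyset)$ case), so nothing further needs to be verified.

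I do not expect any genuine obstacle here: every ingredient has already been isolated as a lemma, and the theorem is essentially a bookkeeping statement that packages Lemmas \ref{alternative}, \ref{Tr:from:S-:to:bd-}, \ref{D0:D1:cover:Delta} and \ref{M:D0:D1:gives:bd} together with Theorem \ref{prop:xdists}. The only point requiring a little care is keeping the degenerate value $(\emptyset,\emptyset)$ — equivalently the set $\Delta_\rho$ — consistently separated from the generic case at each step, since $\bd_\rho^-$ and not $\bd_\rho$ is the domain on which Definition \ref{def:M:D0:D1} and Lemmas \ref{D0:D1:cover:Delta}, \ref{M:D0:D1:gives:bd} operate.
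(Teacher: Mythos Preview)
Your proposal is correct and follows essentially the same approach as the paper: both assemble the result from Lemmas \ref{Tr:from:S-:to:bd-}, \ref{D0:D1:cover:Delta}, \ref{M:D0:D1:gives:bd} and \ref{alternative}, using \eqref{coarse:eq:xdists} from Theorem \ref{prop:xdists} for injectivity on $S_\rho^-$ and \eqref{Delta:rho:equiv:empty:partial} for the degenerate case. Your write-up is slightly more explicit in separating the two cases and in citing Lemma \ref{D0:D1:cover:Delta} to place $M(D_0,D_1)$ in $S_\rho^-$, but the structure is identical.
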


\begin{proof}
By Lemma \ref{Tr:from:S-:to:bd-}, we have $\tr_\rho(S_\rho^-)\subset\bd_\rho^-$,
and by Lemma \ref{M:D0:D1:gives:bd}, we have $\bd_\rho^-\subset\tr_\rho(S_\rho^-)$.
Hence the operator $\tr_\rho:S_\rho^-\to\bd_\rho^-$ is surjective,
and by statement \eqref{coarse:eq:xdists}, it is injective.
In view of Lemma \ref{alternative}, % yields
%$\tr_\rho(\Delta_\rho)
%=(\partial^0_\rho\Delta_\rho,\partial^1_\rho\Delta_\rho)
%=(\emptyset,\emptyset)$,
we have $\tr_\rho(S_\rho)=\bd_\rho$, 
and that $\tr_\rho:S_\rho\to\bd_\rho$ is a bijection.
Finally, by Lemmas \ref{M:D0:D1:gives:bd} and \ref{alternative}, the inverse
admits the above representation.
\end{proof}

\section{Restriction and interpolation operators}
\label{sec:covering:theory}

From now on, we work with the two distinct nested grids from \eqref{fixed:grids}.

\medskip

We present an auxiliary result that will be used in Theorems \ref{thm:R:properties} 
and \ref{thm:refining:works}.
%Its proof can be skipped on first reading.

\begin{lemma}\label{lem:v:cover}
For all $k\in\N_0$, all $\check x\in\Delta_{\check\rho}$ and all $v\in B_{(k+1)\check\rho/2}(\check x)$,
there exists 
\[\hat x\in B_{(\hat\rho+k\check\rho)/2}(\check x)\cap B_{\hat\rho/2}(v)\cap\Delta_{\hat\rho}.\]
\end{lemma}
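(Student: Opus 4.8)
The plan is to construct $\hat x$ one coordinate at a time, exploiting that $\Delta_{\hat\rho}=\hat\rho\Z^m$ is a product grid and that every ball occurring in the statement is an axis‑parallel cube for $\|\cdot\|_\infty$, so that the claim splits into $m$ independent one‑dimensional problems. Throughout I would write $\hat\rho=n\check\rho$ with $n\in\N_2$, as permitted by \eqref{fixed:grids}. The one structural fact that will do most of the work is that any candidate $\hat x_j\in\hat\rho\Z$ and the given $\check x_j\in\check\rho\Z$ both lie in $\check\rho\Z$, so that $|\hat x_j-\check x_j|$ is automatically an integer multiple of $\check\rho$.

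For each coordinate $j$ I would take $\hat x_j$ to be an element of $\hat\rho\Z$ minimizing $|\hat x_j-v_j|$ and, among those (there are at most two: the unique nearest multiple of $\hat\rho$ to $v_j$, or, when $v_j$ is the exact midpoint of two consecutive multiples, both of them), minimizing $|\hat x_j-\check x_j|$. Since the nearest element of $\hat\rho\Z$ to any real number is at distance at most $\hat\rho/2$, this already gives $|\hat x_j-v_j|\le\hat\rho/2$ for every $j$, hence $\hat x:=(\hat x_1,\dots,\hat x_m)\in B_{\hat\rho/2}(v)\cap\Delta_{\hat\rho}$.

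It remains to verify $|\hat x_j-\check x_j|\le(\hat\rho+k\check\rho)/2$, for which I would distinguish two cases. If $v_j$ is not the midpoint of two consecutive multiples of $\hat\rho$, then $\hat x_j$ is the unique nearest multiple and $|\hat x_j-v_j|<\hat\rho/2$ strictly, so the triangle inequality gives
\[
|\hat x_j-\check x_j|<\tfrac{\hat\rho}{2}+\tfrac{(k+1)\check\rho}{2}=\tfrac{(n+k+1)\check\rho}{2};
\]
since $|\hat x_j-\check x_j|\in\check\rho\Z$ and the largest multiple of $\check\rho$ strictly below $(n+k+1)\check\rho/2$ is $(n+k)\check\rho/2$ when $n+k$ is even and $(n+k-1)\check\rho/2$ when $n+k$ is odd — in either case at most $(n+k)\check\rho/2$ — this forces $|\hat x_j-\check x_j|\le(\hat\rho+k\check\rho)/2$. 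If instead $v_j$ is such a midpoint, lying strictly between consecutive multiples $\mu<\mu'$ of $\hat\rho$, then both $\mu$ and $\mu'$ minimize $|\,\cdot\,-v_j|$, so $\hat x_j$ is whichever of them is nearer to $\check x_j$: if $\check x_j\in[\mu,\mu']$ this gives $|\hat x_j-\check x_j|\le\hat\rho/2\le(\hat\rho+k\check\rho)/2$, and if $\check x_j$ lies outside $[\mu,\mu']$ — say $\check x_j<\mu$, the other case being symmetric — then $\hat x_j=\mu$ and $|\hat x_j-\check x_j|=\mu-\check x_j\le v_j-\check x_j\le(k+1)\check\rho/2\le(\hat\rho+k\check\rho)/2$, using $n\ge1$. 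The borderline sub‑cases ($v_j\in\hat\rho\Z$, or $\check x_j\in\{\mu,\mu'\}$) are subsumed by the estimates just made.

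The delicate point — and the reason one cannot simply let $\hat x$ be the nearest point of $\Delta_{\hat\rho}$ to $v$ — is exactly the midpoint situation: a bare triangle‑inequality bound there only yields $(\hat\rho+(k+1)\check\rho)/2$, overshooting the target by $\check\rho/2$, so one genuinely needs both the tie‑breaking toward $\check x$ at midpoints and, in the generic case, the observation that $|\hat x_j-\check x_j|\in\check\rho\Z$ to absorb the leftover half‑step. Everything else is bookkeeping across the finitely many cases.
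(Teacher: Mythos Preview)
Your proof is correct and follows essentially the same approach as the paper: both construct $\hat x$ coordinate-wise as the nearest element of $\hat\rho\Z$ to $v_j$, breaking ties toward $\check x_j$, and both exploit that $|\hat x_j-\check x_j|\in\check\rho\N_0$ to tighten the triangle-inequality bound by the crucial half-step. The paper organizes the cases slightly differently---it establishes the strict inequality $|\hat x_j-\check x_j|<(\hat\rho+(k+1)\check\rho)/2$ uniformly across all four sub-cases and invokes integrality once at the end, whereas you use integrality only in the non-midpoint case and a direct estimate at midpoints---but the construction and the key idea are identical.
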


\begin{proof}
Consider the point $\hat x\in\Delta_{\hat\rho}$ given by
\[
\hat x_j:=\begin{cases}
\lfloor\tfrac{v_j}{\hat\rho}\rfloor\hat\rho,
&\tfrac{v_j}{\hat\rho}-\lfloor\tfrac{v_j}{\hat\rho}\rfloor<\tfrac12,\\
\lfloor\tfrac{v_j}{\hat\rho}\rfloor\hat\rho,
&\tfrac{v_j}{\hat\rho}-\lfloor\tfrac{v_j}{\hat\rho}\rfloor=\tfrac12,\ \check x_j<v_j,\\
\lceil\tfrac{v_j}{\hat\rho}\rceil\hat\rho,
&\tfrac{v_j}{\hat\rho}-\lfloor\tfrac{v_j}{\hat\rho}\rfloor=\tfrac12,\ v_j\le\check x_j,\\
\lceil\tfrac{v_j}{\hat\rho}\rceil\hat\rho,
&\tfrac{v_j}{\hat\rho}-\lfloor\tfrac{v_j}{\hat\rho}\rfloor>\tfrac12
\end{cases}
\]
for all $j\in[1,m]$. 
We first check that for all $j\in[1,m]$, we have
\begin{align} 
&|\hat x_j-\check x_j|<\tfrac{\hat\rho}{2}+\tfrac{(k+1)\check\rho}{2},\label{aim:1}\\
&|v_j-\hat x_j|\le\tfrac{\hat\rho}{2}.\label{aim:2}
\end{align}
Case 1: When $\tfrac{v_j}{\hat\rho}-\lfloor\tfrac{v_j}{\hat\rho}\rfloor<\tfrac12$, 
then \eqref{aim:1} follows from
$\hat x_j-\check x_j=\lfloor\tfrac{v_j}{\hat\rho}\rfloor\hat\rho-\check x_j$
and
\[
-\tfrac{(k+1)\check\rho}{2}-\tfrac{\hat\rho}{2}
\le v_j-\tfrac{\hat\rho}{2}-\check x_j
<\lfloor\tfrac{v_j}{\hat\rho}\rfloor\hat\rho-\check x_j
\le v_j-\check x_j
\le\tfrac{(k+1)\check\rho}{2}.\]
Inequality \eqref{aim:2} holds because
$|v_j-\hat x_j|
=|v_j-\lfloor\tfrac{v_j}{\hat\rho}\rfloor\hat\rho|
=|\tfrac{v_j}{\hat\rho}-\lfloor\tfrac{v_j}{\hat\rho}\rfloor|\hat\rho
<\tfrac{\hat\rho}{2}$.

Case 2: When $\tfrac{v_j}{\hat\rho}-\lfloor\tfrac{v_j}{\hat\rho}\rfloor=\tfrac12$ 
and $\check x_j<v_j$, then inequality \eqref{aim:1} follows from
$\hat x_j-\check x_j
=\lfloor\tfrac{v_j}{\hat\rho}\rfloor\hat\rho-\check x_j
=v_j-\check x_j-\tfrac{\hat\rho}{2}$
and
\[
-\tfrac{\hat\rho}{2}
<v_j-\check x_j-\tfrac{\hat\rho}{2}
<v_j-\check x_j\le\tfrac{(k+1)\check\rho}{2}.
\]
Inequality \eqref{aim:2} holds because
$|v_j-\hat x_j|
=|\tfrac{v_j}{\hat\rho}-\lfloor\tfrac{v_j}{\hat\rho}\rfloor|\hat\rho
=\tfrac{\hat\rho}{2}$.

Case 3: When $\tfrac{v_j}{\hat\rho}-\lfloor\tfrac{v_j}{\hat\rho}\rfloor=\tfrac12$
and $v_j\le\check x_j$, then inequality \eqref{aim:1} follows from
$\hat x_j-\check x_j
=\lceil\tfrac{v_j}{\hat\rho}\rceil\hat\rho-\check x_j
=v_j+\tfrac{\hat\rho}{2}-\check x_j$
and
\[
-\tfrac{(k+1)\check\rho}{2}+\tfrac{\hat\rho}{2}
\le v_j+\tfrac{\hat\rho}{2}-\check x_j
\le\tfrac{\hat\rho}{2}.
\]
Inequality \eqref{aim:2} follows from
$|v_j-\hat x_j|
=|\tfrac{v_j}{\hat\rho}-\lceil\tfrac{v_j}{\hat\rho}\rceil|\hat\rho
=\tfrac{\hat\rho}{2}$.

Case 4: When $\tfrac{v_j}{\hat\rho}-\lfloor\tfrac{v_j}{\hat\rho}\rfloor>\tfrac12$,
then \eqref{aim:1} follows from
$\hat x_j-\check x_j
=\lceil\tfrac{v_j}{\hat\rho}\rceil\hat\rho-\check x_j$
and
\[
-\tfrac{(k+1)\check\rho}{2}
\le v_j-\check x_j
<\lceil\tfrac{v_j}{\hat\rho}\rceil\hat\rho-\check x_j
<v_j+\tfrac{\hat\rho}{2}-\check x_j
\le\tfrac{(k+1)\check\rho}{2}+\tfrac{\hat\rho}{2}.
\]
Inequality \eqref{aim:2} follows from
$|v_j-\hat x_j|
=|\tfrac{v_j}{\hat\rho}-\lceil\tfrac{v_j}{\hat\rho}\rceil|\hat\rho
<\tfrac{\hat\rho}{2}$.

Since $\|\hat x-\check x\|_\infty\in\check\rho\N_0$, the statement of the 
lemma follows from inequalities \eqref{aim:1} and \eqref{aim:2}.
\end{proof}

The operator $R$ has several properties that make it stand out
among the operators that map $S_{\check\rho}$ to $S_{\hat\rho}$.
%Its images are the largest best-approximations and the smallest Voronoi covers
%of their preimages.
Some of the notation is from Definition \ref{def:collections}.

\begin{theorem}\label{thm:R:properties}
The operator $R:S_{\check\rho}\to S_{\hat\rho}$ satisfies 
$R(\Delta_{\check\rho})=\Delta_{\hat\rho}$.
It has the set-theoretical properties
\begin{align}
&\forall\,\check M,\check M'\in S_{\check\rho}:
&&R(\check M\cup\check M')=R(\check M)\cup R(\check M'),&
\label{R:union}\\
&\forall\,\check M,\check M'\in S_{\check\rho}:
&&(\check M\subset\check M')\Rightarrow(R(\check M)\subset R(\check M')),&
\label{R:monotone}\\
&\forall\,\check M\in S_{\check\rho},\ \forall\,\hat x\in\Delta_{\hat\rho}:
&&R(\hat x+\check M)=\hat x+R(\check M),&
\label{R:Minkowski:sum}
\end{align}
the approximation properties
\begin{align}
&\forall\,\check M\in S_{\check\rho}:
&&\dist_H(R(\check M),\check M)\le\hat\rho/2,&
\label{eq:GammaHausdorff}\\
&\forall\,\check M\in S_{\check\rho}:
&&R(\check M)\in A_{\hat\rho}(\check M),&
\label{best:approximation}\\
&\forall\,\check M\in S_{\check\rho},\ 
\forall\,\hat M\in A_{\hat\rho}(\check M):&&\hat M\subset R(\check M),&
\label{maximal:best:approximation}\\
&\forall\,\check M\in S_{\check\rho}:
&&R(\check M)\in V_{\check\rho}^{\hat\rho}(\check M),&
\label{voronoi:overapproximation}\\
&\forall\,\check M\in S_{\check\rho},\ 
\forall\,\hat M\in V_{\check\rho}^{\hat\rho}(\check M):
&&R(\check M)\subset\hat M,&
\label{minimal:voronoi:overapproximation}
\end{align}
and the topology-related properties
\begin{align}
&\forall\,\check M\in C_{\check\rho}:
&&R(\check M)\in C_{\hat\rho},&\label{R:connected}\\
&\forall\,\check M\in S_{\check\rho}:
&&\partial^0_{\hat\rho}R(\check M)\subset R(\partial^0_{\check\rho}\check M).&
\label{new:boundary:in:old:boundary}
\end{align}
\end{theorem}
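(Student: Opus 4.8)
\emph{Proof plan.}
The strategy is to verify the thirteen properties in natural groups, most of them reducing to the formula $R(\check M)=B_{\hat\rho/2}(\check M)\cap\Delta_{\hat\rho}$ and to Lemma~\ref{lem:v:cover}. Since $\Delta_{\hat\rho}\subset\Delta_{\check\rho}$, every $\hat x\in\Delta_{\hat\rho}$ has $\dist(\hat x,\Delta_{\check\rho})=0$, so $R(\Delta_{\check\rho})=\Delta_{\hat\rho}$. The set-theoretic properties \eqref{R:union}, \eqref{R:monotone} and \eqref{R:Minkowski:sum} follow from $B_\delta(A\cup B)=B_\delta(A)\cup B_\delta(B)$, from $B_\delta(\hat x+A)=\hat x+B_\delta(A)$ and $\hat x+\Delta_{\hat\rho}=\Delta_{\hat\rho}$ for $\hat x\in\Delta_{\hat\rho}$, together with the distributivity of $(\,\cdot\,)\cap\Delta_{\hat\rho}$ over unions. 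For \eqref{eq:GammaHausdorff}, the bound $\dist(R(\check M),\check M)\le\hat\rho/2$ is immediate, and $\dist(\check M,R(\check M))\le\hat\rho/2$ follows from Lemma~\ref{lem:v:cover} with $k=0$ and $v=\check x\in\check M$; the same instance of that lemma, applied to an arbitrary $v\in B_{\check\rho/2}(\check M)$ through a witness $\check x\in\check M$ with $v\in B_{\check\rho/2}(\check x)$, produces $\hat x\in R(\check M)$ with $v\in B_{\hat\rho/2}(\hat x)$, which is precisely \eqref{voronoi:overapproximation}.

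For the best-approximation cluster I would isolate one observation: if $M'\in S_{\hat\rho}$ has $\dist_H(M',\check M)=:d<\hat\rho/2$, then $M'=R(\check M)$. Indeed $M'\subset R(\check M)$ is clear, and conversely, for $\hat x\in R(\check M)$ with witness $\check x\in\check M$ one picks $\hat y\in M'$ with $\|\check x-\hat y\|_\infty\le d$, so $\|\hat x-\hat y\|_\infty<\hat\rho/2+d<\hat\rho$, which forces $\hat x=\hat y\in M'$ since $\hat x,\hat y\in\Delta_{\hat\rho}$. Together with \eqref{eq:GammaHausdorff} this gives \eqref{best:approximation}: any competitor $M'$ either satisfies $\dist_H(M',\check M)\ge\hat\rho/2\ge\dist_H(R(\check M),\check M)$ or equals $R(\check M)$. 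Once \eqref{best:approximation} holds the optimal value is $\le\hat\rho/2$, so every $\hat M\in A_{\hat\rho}(\check M)$ has $\dist(\hat M,\check M)\le\hat\rho/2$, hence $\hat M\subset R(\check M)$, which is \eqref{maximal:best:approximation}.

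For \eqref{minimal:voronoi:overapproximation}, given $\hat M\in V_{\check\rho}^{\hat\rho}(\check M)$ and $\hat x\in R(\check M)$ with witness $\check x\in\check M$ (so $\|\hat x-\check x\|_\infty\le\hat\rho/2$), I construct coordinatewise a point $v$ with $\|v-\check x\|_\infty\le\check\rho/2$ and $|v_j-\hat x_j|<\hat\rho/2$ for every $j$; this is possible because $|\check x_j-\hat x_j|\le\hat\rho/2<\check\rho/2+\hat\rho/2$, so the closed $\check\rho/2$-interval about $\check x_j$ meets the open $\hat\rho/2$-interval about $\hat x_j$. Then $v\in B_{\check\rho/2}(\check M)\subset B_{\hat\rho/2}(\hat M)$, and any $\hat y\in\hat M$ with $\|v-\hat y\|_\infty\le\hat\rho/2$ must agree with $\hat x$ in every coordinate, so $\hat x\in\hat M$. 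For \eqref{R:connected} I use the coordinatewise rounding $\psi(\check y)_j:=\rd(\check y_j/\hat\rho)\hat\rho$: it satisfies $\|\psi(\check y)-\check y\|_\infty\le\hat\rho/2$, so it maps $\check M$ into $R(\check M)$, and by \eqref{round:close:numbers} (with $k=1$, using $\check\rho/\hat\rho\le\tfrac12$) it sends $\check\rho$-adjacent points to points at $\|\cdot\|_\infty$-distance $\le\hat\rho$. Hence a $\check\rho$-path inside $\check M$ between witnesses $\check a,\check b$ of points $\hat a,\hat b\in R(\check M)$ is carried by $\psi$ to a $\hat\rho$-path inside $R(\check M)$ from $\psi(\check a)$ to $\psi(\check b)$, and prepending the admissible step $\hat a\to\psi(\check a)$ and appending $\psi(\check b)\to\hat b$ yields the required path.

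The delicate property is \eqref{new:boundary:in:old:boundary}, and I expect it to be the main obstacle. If $\partial^0_{\hat\rho}R(\check M)=\emptyset$ there is nothing to prove; otherwise pick $\hat x\in\partial^0_{\hat\rho}R(\check M)$ and $\hat z=\hat x+\hat\rho e\in\Delta_{\hat\rho}\setminus R(\check M)$ with $e\in\{-1,0,1\}^m$, $e\neq 0$, and $\|\hat x-\hat z\|_\infty=\hat\rho$. Put $C_x:=B_{\hat\rho/2}(\hat x)$, $C_z:=B_{\hat\rho/2}(\hat z)$; then $C_x\cap\check M\ni\check x_0$ for some $\check x_0$ (as $\hat x\in R(\check M)$), while $C_z\cap\check M=\emptyset$ (as $\dist(\hat z,\check M)>\hat\rho/2$). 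It suffices to find a point of $\partial^0_{\check\rho}\check M$ in $C_x$, for then $\dist(\hat x,\partial^0_{\check\rho}\check M)\le\hat\rho/2$ and $\hat x\in R(\partial^0_{\check\rho}\check M)$. I obtain it from a $\check\rho$-path running from $\check x_0$ to $\hat z$ inside $(C_x\cup C_z)\cap\Delta_{\check\rho}$: route from $\check x_0$ to the corner $p:=\hat x+\lfloor\hat\rho/(2\check\rho)\rfloor\check\rho\,e$ of $C_x$ by a straight path $\phi$ (which stays in $C_x$ by \eqref{path:stays:in:square} with reference point $\hat x$), then make one step to the matching corner $q:=\hat z-\lfloor\hat\rho/(2\check\rho)\rfloor\check\rho\,e$ of $C_z$, then reach $\hat z$ by a straight path inside $C_z$. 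This path begins in $\check M$ and ends outside it, so its last $\check M$-vertex $\check w$ has a successor outside $\check M$ at $\|\cdot\|_\infty$-distance exactly $\check\rho$, whence $\check w\in\partial^0_{\check\rho}\check M$ by \eqref{def:partial:1}; and $\check w\in\check M$ with $C_z\cap\check M=\emptyset$ forces $\check w\in C_x$. The one genuinely delicate point is the bookkeeping for $p$ and $q$: one must check $p\in C_x\cap\Delta_{\check\rho}$, $q\in C_z\cap\Delta_{\check\rho}$, and $\|p-q\|_\infty=\hat\rho-2\lfloor\hat\rho/(2\check\rho)\rfloor\check\rho\in\{0,\check\rho\}$, valid for every admissible $e$ (also those with several nonzero entries) irrespective of the parity of $\hat\rho/\check\rho$; this is a short computation resting on $\hat\rho\in\check\rho\N_2$, but it carries all the geometric content of the statement.
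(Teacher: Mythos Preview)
Your proof is correct.  For every property except \eqref{new:boundary:in:old:boundary} your argument is essentially the paper's, with only cosmetic variations: you reorder the proof of \eqref{best:approximation}/\eqref{maximal:best:approximation} through the observation that $\dist_H(M',\check M)<\hat\rho/2$ forces $M'=R(\check M)$, and for \eqref{minimal:voronoi:overapproximation} you build the auxiliary point $v$ coordinatewise rather than on the segment $[\check x,\hat x]$ as the paper does, but both constructions yield a point in $B_{\check\rho/2}(\check x)$ strictly closer than $\hat\rho/2$ to $\hat x$.

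The one genuine difference is \eqref{new:boundary:in:old:boundary}.  The paper splits on the parity of $\hat\rho/\check\rho$: in the even case it takes the midpoint $\check z=(\hat x+\hat z)/2\in\Delta_{\check\rho}$ and runs a straight path from the witness $\check x$ to $\check z$ inside $B_{\hat\rho/2}(\hat x)$; in the odd case the midpoint is off-grid, so it introduces $\check x'=\tfrac{\hat\rho+\check\rho}{2\hat\rho}\hat x+\tfrac{\hat\rho-\check\rho}{2\hat\rho}\hat z$ and $\check z'=\tfrac{\hat\rho-\check\rho}{2\hat\rho}\hat x+\tfrac{\hat\rho+\check\rho}{2\hat\rho}\hat z$ and further distinguishes whether $\check x'\in\check M$.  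Your corner points $p=\hat x+\lfloor\hat\rho/(2\check\rho)\rfloor\check\rho\,e$ and $q=\hat z-\lfloor\hat\rho/(2\check\rho)\rfloor\check\rho\,e$ are precisely the paper's $\check x'$ and $\check z'$ in the odd case and collapse to the midpoint in the even case, so your three-segment path $\check x_0\to p\to q\to\hat z$ absorbs all three subcases into one argument.  This is cleaner than the paper's proof; the only price is that you route all the way to $\hat z$ rather than stopping at the first point of $\check M^c$, but since the whole path lies in $C_x\cup C_z$ and $C_z\cap\check M=\emptyset$, the last $\check M$-vertex is forced into $C_x$ regardless.
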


\begin{proof}
The proofs of statements \eqref{R:union}, \eqref{R:monotone} and \eqref{R:Minkowski:sum} 
are elementary, and it is obvious that $R(\Delta_{\check\rho})=\Delta_{\hat\rho}$.

\medskip

We check \eqref{eq:GammaHausdorff}.
Let $\check M\in S_{\check\rho}$.
By \eqref{def:R}, we have 
\[\dist(R(\check M),\check M)\le\hat\rho/2.\]
Conversely, let $\check x\in\check M$.
Then the point $\hat x\in\Delta_{\hat\rho}$ given by $\hat x_j=\rd(\check x_j/\hat\rho)\hat\rho$
for all $j\in[1,m]$ satisfies $\|\hat x-\check x\|_\infty\le\hat\rho/2$, which implies 
$\hat x\in R(\check M)$.
Hence
\[\dist(\check M,R(\check M))\le\hat\rho/2,\] 
and the proof of \eqref{eq:GammaHausdorff} is complete. 
The fact that $\hat x\in R(M)$ also shows that $R(\check M)\neq\emptyset$, 
and hence that $R$ is well-defined.

\medskip

In the following, we check \eqref{best:approximation} and \eqref{maximal:best:approximation}.
First note that for every $\check M\in S_{\check\rho}$, we have
$\emptyset\neq\{\dist_H(\hat M,\check M):\hat M\in S_{\hat\rho}\}\subset\check\rho\N_0$,
%Since $S_{\hat\rho}\neq\emptyset$, 
and it follows from the well-ordering principle that
\begin{equation}\label{A:nonempty}
\forall\,\check M\in S_{\check\rho}:\quad A_{\hat\rho}(\check M)\neq \emptyset.
\end{equation}

Assume that \eqref{maximal:best:approximation} is false.
Then there exist $\check M\in S_{\check\rho}$, %as well as
$\hat M\in A_{\hat\rho}(\check M)$ and $\hat x\in\hat M\cap(R(\check M))^c$.
Hence $\hat x\notin B_{\hat\rho/2}(\check M)$, and we find
\[\dist(\hat M,\check M)\ge\dist(\hat x,\check M)>\hat\rho/2
\ge\dist_H(R(\check M),\check M),\] 
which contradicts $\hat M\in A_{\hat\rho}(\check M)$.
All in all, statement \eqref{maximal:best:approximation} holds.

Assume that \eqref{best:approximation} is false.
Then there is $\check M\in S_{\check\rho}$ with
$R(\check M)\notin A_{\hat\rho}(\check M)$.
By %statement 
\eqref{A:nonempty}, there exists $\hat M\in A_{\hat\rho}(\check M)$,
and by \eqref{def:A}, since $R(\check M)\notin A_{\hat\rho}(\check M)$,
and by \eqref{eq:GammaHausdorff}, we have 
\[\dist_H(\check M,\hat M)<\dist_H(\check M,R(\check M))\le\hat\rho/2.\]
The triangle inequality, statement \eqref{eq:GammaHausdorff}, and the above
inequality yield
\[\dist(R(\check M),\hat M)\le\dist(R(\check M),\check M)+\dist(\check M,\hat M)<\hat\rho.\]
%Since $R(\check M)\subset\Delta_{\hat\rho}$ and $\hat M\subset\Delta_{\hat\rho}$, 
Since $\dist(R(\check M),\hat M)\in\hat\rho\N_0$,
this implies $\dist(R(\check M),\hat M)=0$ and $R(\check M)\subset\hat M$.
%In view of 
By \eqref{maximal:best:approximation} we also have $\hat M\subset R(\check M)$,
and hence $\hat M=R(\check M)$, which contradicts the definition of these sets.
All in all, statement \eqref{best:approximation} holds.

\medskip

In view of \eqref{def:R} and \eqref{def:V}, statement \eqref{voronoi:overapproximation} 
is equivalent with
\[B_{\check\rho/2}(\check M)\subset B_{\hat\rho/2}(B_{\hat\rho/2}(\check M)\cap\Delta_{\hat\rho}))
\quad\forall\check M\in S_{\check\rho}.\]
To check the above inclusion,
let $\check M\in S_{\check\rho}$ and $v\in B_{\check\rho/2}(\check M)$.
Then there exists $\check x\in\check M$ with $v\in B_{\check\rho/2}(\check x)$.
By Lemma \ref{lem:v:cover} (with $k=0$), there exists $\hat x\in B_{\hat\rho/2}(\check x)\cap\Delta_{\hat\rho}$
with $v\in B_{\hat\rho/2}(\hat x)$.
But then, as desired, we have
\[v\in B_{\hat\rho/2}(\hat x)
\subset B_{\hat\rho/2}(B_{\hat\rho/2}(\check x)\cap\Delta_{\hat\rho}) 
\subset B_{\hat\rho/2}(B_{\hat\rho/2}(\check M)\cap\Delta_{\hat\rho})).\] 

\medskip

We show \eqref{minimal:voronoi:overapproximation}.
Let $\check M\in S_{\check\rho}$, let $\hat M\in S_{\hat\rho}$,
and assume that there exists %and let 
$\hat x\in R(\check M)\cap\hat M^c$.
By \eqref{def:R}, there exists $\check x\in\check M$ with 
$\|\check x-\hat x\|_\infty\le\hat\rho/2$, 
and the point
\[v:=\begin{cases}\check x+\tfrac{\check\rho}{2}
\tfrac{\hat x-\check x}{\|\check x-\hat x\|_\infty},&\check x\neq\hat x,\\
\hat x,&\check x=\hat x\end{cases}\]
satisfies $v\in B_{\check\rho/2}(\check x)$ and hence
\begin{equation}\label{in:Voronoi:M}
v\in B_{\check\rho/2}(\check M).
\end{equation}
If $\check x=\hat x$, then $\|v-\hat x\|_\infty=0$.
If $\check x\neq\hat x$, then $\|\check x-\hat x\|_\infty\ge\check\rho$ and
\begin{align*}
\|v-\hat x\|_\infty
=\|(1-\tfrac{\check\rho}{2\|\check x-\hat x\|_\infty})(\check x-\hat x)\|_\infty
<\|\check x-\hat x\|_\infty\le\hat\rho/2.
\end{align*}
In both cases, we have $\|v-\hat x\|_\infty<\hat\rho/2$. 
Because of $\hat x\in\hat M^c\cap\Delta_{\hat\rho}$, 
we have $\dist(\hat x,\hat M)\ge\hat\rho$, 
and the triangle inequality yields
\[\dist(v,\hat M)\ge\dist(\hat x,\hat M)-\|v-\hat x\|_\infty>\hat\rho/2.\]
Hence we have
\begin{equation}\label{not:in:Voronoi:M'}
v\in B_{\hat\rho/2}(\hat M)^c.
\end{equation}
In view of \eqref{def:V}, statements \eqref{in:Voronoi:M} and \eqref{not:in:Voronoi:M'}
mean that $\hat M\notin V_{\check\rho}^{\hat\rho}(\check M)$.
This completes the proof of \eqref{minimal:voronoi:overapproximation}.

\medskip

We check \eqref{R:connected}.
Let $\check M\in C_{\check\rho}$, and let $\hat x\in R(\check M)$ and $\hat x'\in R(\check M)$.
By \eqref{def:R}, there exist $\check x\in\check M$ and $\check x'\in\check M$ 
with $\|\hat x-\check x\|_\infty\le\hat\rho/2$
and $\|\hat x'-\check x'\|_\infty\le\hat\rho/2$.
Since $\check M\in C_{\check\rho}$, there exists $\check p\in P_{\check\rho}(\check x,\check x')$ 
with $\check p(\ell)\in\check M$ for all $\ell\in[0,L(\check p)]$.
Let $\hat\xi=(\hat\xi_0,\ldots,\hat\xi_{L(\check p)})\in(\Delta_{\hat\rho})^{L(\check p)+1}$ 
be given by
$\hat\xi_0:=\hat x$, $\hat\xi_{L(\check p)}:=\hat x'$, and 
\[(\hat\xi_\ell)_j:=\rd(\check p(\ell)_j/\hat\rho)\hat\rho
\quad\forall\,\ell\in(0,L(\check p)),\ \forall\,j\in[1,m]\]
with $\rd$ as in Lemma \ref{lem:rounding}.
Then $\|\hat \xi_\ell-\check p(\ell)\|_\infty\le\hat\rho/2$ for all $\ell\in[0,L(\check p)]$,
and since $\check p(\ell)\in\check M$ for all $\ell\in[0,L(p)]$, we have 
$\hat\xi_\ell\in R(\check M)$ for all $\ell\in[0,L(p)]$.
In addition, we have
\[
\|\hat\xi_\ell-\hat\xi_{\ell-1}\|_\infty
\le\|\hat\xi_\ell-\check p(\ell)\|_\infty
+\|\check p(\ell)-\check p(\ell-1)\|_\infty
+\|\check p(\ell-1)-\hat\xi_{\ell-1}\|_\infty
\le\hat\rho+\check\rho
\]
for all $\ell\in(0,L(\check p)]$.
Since $\hat\xi_\ell\in\Delta_{\hat\rho}$ for all $\ell\in[0,L(\check p)]$, it follows that
\[\|\hat\xi_\ell-\hat\xi_{\ell-1}\|_\infty\le\hat\rho\quad\forall\,\ell\in(0,L(\check p)].\]
Hence 
$\hat p:=(\hat\xi_0,\hat\xi_1,\ldots,\hat\xi_{L(\check p)})\in(\Delta_{\hat\rho})^{L(\check p)+1}$
is a path $\hat p\in P_{\hat\rho}(\hat x,\hat x')$ of length $L(\hat p)=L(\check p)$ and 
$\hat p(\ell)\in R(\check M)$
for all $\ell\in[0,L(\hat p)]$.
Since $\hat x$ and $\hat x'$ were arbitrary, 
it follows that $R(\check M)\in C_{\hat\rho}$.
All in all, statement \eqref{R:connected} holds.

\medskip

Finally, we show \eqref{new:boundary:in:old:boundary}.
Let $\check M\in S_{\check\rho}$ and let $\hat x\in\partial^0_{\hat\rho} R(\check M)$.
By \eqref{def:partial:1}, there exists $\hat z\in R(\check M)^c\cap\Delta_{\hat\rho}$ 
with $\|\hat z-\hat x\|_\infty=\hat\rho$. 
By \eqref{def:R}, %and since $\hat x\in R(\check M)$, 
there exists $\check x\in\check M$ with $\|\check x-\hat x\|_\infty\le\hat\rho/2$.

Case 1: If $\hat\rho/\check\rho\in 2\N_1$, then let
$\check z:=(\hat x+\hat z)/2$.
Since both $\check z\in\Delta_{\check\rho}$ and $\|\check z-\hat z\|_\infty=\hat\rho/2$, 
it follows from 
$\hat z\in R(\check M)^c\cap\Delta_{\hat\rho}$
and \eqref{def:R} that $\check z\in\check M^c\cap\Delta_{\check\rho}$.
Let $\phi(\,\cdot\,)=\phi(\,\cdot\,;\check x,\check z;\check\rho)$ 
be the path from Lemma \ref{lem:grid:line}.
By Lemma \ref{path:from:M:to:Mc:hits:boundaries}a), there exists $\ell\in[0,L(\phi))$ with
$\phi(\ell)\in\partial^0_{\check\rho}\check M$.
Because of $\|\check x-\hat x\|_\infty\le\hat\rho/2$ and 
$\|\hat x-\check z\|_\infty=\hat\rho/2$, and by \eqref{path:stays:in:square}, we have
\[\|\phi(\ell)-\hat x\|_\infty
\le\max\{\|\check x-\hat x\|_\infty,\|\hat x-\check z\|_\infty\}
=\hat\rho/2.\]
Again with \eqref{def:R}, it follows that 
$\hat x\in R(\partial^0_{\check\rho}\check M)$.

Case 2: If $\hat\rho/\check\rho\in 2\N_1+1$, then let $n\in\N_1$ with
$\hat\rho=(2n+1)\check\rho$.
Consider the point
$\check x':=\tfrac{\hat\rho+\check\rho}{2\hat\rho}\hat x
+\tfrac{\hat\rho-\check\rho}{2\hat\rho}\hat z$.
%Since $\|\hat z-\hat x\|_\infty=\hat\rho$, for every $j\in[1,m]$,
%there exists $t_j\in\{-1,0,1\}$ such that $\hat z_j=\hat x_j+t_j\hat\rho$.
%We compute
Since $\hat z_j-\hat x_j\in\hat\rho\Z$ for all $j\in[1,m]$, we have
\[
\check x_j'
=\tfrac{\hat\rho+\check\rho}{2\hat\rho}\hat x_j
+\tfrac{\hat\rho-\check\rho}{2\hat\rho}\hat z_j
%=\tfrac{\hat\rho+\check\rho}{2\hat\rho}\hat x_j
%+\tfrac{\hat\rho-\check\rho}{2\hat\rho}(\hat x_j+(\hat z_j-\hat x_j))
=\hat x_j+\tfrac{\hat\rho-\check\rho}{2\hat\rho}(\hat z_j-\hat x_j)
=\hat x_j+\tfrac{n\check\rho}{\hat\rho}(\hat z_j-\hat x_j)\in\check\rho\Z
\]
for all $j\in[1,m]$, and hence $\check x'\in\Delta_{\check\rho}$.
We also compute
\[
|\hat x_j-\check x'_j|
=|\hat x_j-\tfrac{\hat\rho+\check\rho}{2\hat\rho}\hat x_j
-\tfrac{\hat\rho-\check\rho}{2\hat\rho}\hat z_j|
=\tfrac{\hat\rho-\check\rho}{2\hat\rho}|\hat x_j-\hat z_j|
\quad\forall\,j\in[1,m],
\]
which implies $\|\hat x-\check x'\|_\infty
=\tfrac{\hat\rho-\check\rho}{2\hat\rho}\|\hat x-\hat z\|_\infty
=(\hat\rho-\check\rho)/2$.
%It is easy to check that $\check x'\in\Delta_{\check\rho}$ and that 
%$\|\check x'-\hat x\|_\infty=(\hat\rho-\check\rho)/2$.

Case 2a: If $\check x'\in\check M$, consider 
$\check z':=\tfrac{\hat\rho-\check\rho}{2\hat\rho}\hat x
+\tfrac{\hat\rho+\check\rho}{2\hat\rho}\hat z$.
Similar computations as above show that $\check z'\in\Delta_{\check\rho}$, 
that $\|\check z'-\hat z\|_\infty=(\hat\rho-\check\rho)/2$, 
and that $\|\check z'-\check x'\|_\infty=\check\rho$. 
Since $\|\check z'-\hat z\|_\infty<\hat\rho/2$, it follows from 
$\hat z\in R(\check M)^c\cap\Delta_{\hat\rho}$
and \eqref{def:R} that $\check z'\in\check M^c\cap\Delta_{\check\rho}$.
Hence $\|\check z'-\check x'\|_\infty=\check\rho$ and %statement 
\eqref{def:partial:1} yield 
$\check x'\in\partial^0_{\check\rho}\check M$.
Since $\|\check x'-\hat x\|_\infty<\hat\rho/2$, it follows with \eqref{def:R} 
that $\hat x\in R(\partial^0_{\check\rho}\check M)$.

Case 2b: If $\check x'\in\check M^c\cap\Delta_{\check\rho}$, then 
%$\|x'-y\|_\infty<\rho'/2$, and the statement 
%$x'\in R_{\rho}^{\rho'}(\partial^0_\rho M)$ is derived as in case 1 with $y$ in lieu of $z$.
let $\phi(\,\cdot\,)=\phi(\,\cdot\,;\check x,\check x';\check\rho)$ be the path from Lemma
\ref{lem:grid:line}.
%with $y$ in lieu of $z$.
By Lemma \ref{path:from:M:to:Mc:hits:boundaries}a), there exists $\ell\in[0,L(\phi))$ with
$\phi(\ell)\in\partial^0_{\check\rho}\check M$, and by \eqref{path:stays:in:square}, we have
\[\|\phi(\ell)-\hat x\|_\infty
\le\max\{\|\check x-\hat x\|_\infty,\|\hat x-\check x'\|_\infty\}\le\hat\rho/2.\]
Again with \eqref{def:R}, it follows that 
$\hat x\in R(\partial^0_{\check\rho}\check M)$.
\end{proof}

The proof of the following auxiliary result is similar to that of Lemma \ref{lem:v:cover}.

\begin{lemma}\label{lem:v:I:cover}
For all $\hat x\in\Delta_{\hat\rho}$ and $v\in B_{\hat\rho/2}(\hat x)$,
there exists 
%$\check x\in B_{\hat\rho/2}(\hat x)\cap\Delta_{\check\rho}$ 
%with $v\in B_{\check\rho/2}(\check x)$.
\[
\check x\in B_{\hat\rho/2}(\hat x)\cap B_{\check\rho/2}(v)
\cap\Delta_{\check\rho}.
\]
\end{lemma}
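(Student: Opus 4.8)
The plan is to imitate the proof of Lemma~\ref{lem:v:cover}: instead of an abstract covering argument, I would exhibit an explicit candidate $\check x\in\Delta_{\check\rho}$ obtained by rounding $v$ coordinatewise to the fine grid, breaking ties toward $\hat x$, and then verify the two required inclusions by hand. Concretely, for each $j\in[1,m]$ I would set
\[
\check x_j:=\begin{cases}
\lfloor v_j/\check\rho\rfloor\check\rho,
&v_j/\check\rho-\lfloor v_j/\check\rho\rfloor<1/2,\\
\lfloor v_j/\check\rho\rfloor\check\rho,
&v_j/\check\rho-\lfloor v_j/\check\rho\rfloor=1/2,\ \hat x_j<v_j,\\
\lceil v_j/\check\rho\rceil\check\rho,
&v_j/\check\rho-\lfloor v_j/\check\rho\rfloor=1/2,\ v_j\le\hat x_j,\\
\lceil v_j/\check\rho\rceil\check\rho,
&v_j/\check\rho-\lfloor v_j/\check\rho\rfloor>1/2,
\end{cases}
\]
which is a point of $\Delta_{\check\rho}$ since each $\check x_j\in\check\rho\Z$.

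The verification then splits into the same four cases as in Lemma~\ref{lem:v:cover}. First, in every case $\check x_j$ is one of the two consecutive multiples of $\check\rho$ bracketing $v_j$, so $|v_j-\check x_j|\le\check\rho/2$; this already yields $\check x\in B_{\check\rho/2}(v)$. Second, using the hypothesis $v\in B_{\hat\rho/2}(\hat x)$, i.e.\ $|v_j-\hat x_j|\le\hat\rho/2$, one checks case by case that $|\hat x_j-\check x_j|<(\check\rho+\hat\rho)/2$: in the two non-tie cases this is the triangle inequality combined with the strict bound $|v_j-\check x_j|<\check\rho/2$, while in the two tie cases the choice of side — recall that $v_j\notin\check\rho\Z$ whereas $\hat x_j\in\check\rho\Z$ pins $\hat x_j$ to the near multiple on the correct side — forces the even sharper estimate $|\hat x_j-\check x_j|\le(\hat\rho-\check\rho)/2<\hat\rho/2$. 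Finally I would close exactly as in Lemma~\ref{lem:v:cover}: since $\hat x\in\Delta_{\hat\rho}\subset\Delta_{\check\rho}$ by \eqref{fixed:grids} and $\check x\in\Delta_{\check\rho}$, the difference $\check x_j-\hat x_j$ lies in $\check\rho\Z$, and a number in $\check\rho\Z$ whose modulus is strictly below $(\check\rho+\hat\rho)/2$ has modulus at most $\hat\rho/2$; hence $\|\check x-\hat x\|_\infty\le\hat\rho/2$, so $\check x\in B_{\hat\rho/2}(\hat x)\cap B_{\check\rho/2}(v)\cap\Delta_{\check\rho}$, as required.

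The only real obstacle is bookkeeping: getting the tie-breaking rule right so that the four-case analysis genuinely delivers the strict inequality $|\hat x_j-\check x_j|<(\check\rho+\hat\rho)/2$ uniformly, and then carrying out the integrality reduction to $|\hat x_j-\check x_j|\le\hat\rho/2$ correctly — this last step is where $\hat\rho\in\check\rho\N_2$ enters, and it is worth treating separately whether $\hat\rho/\check\rho$ is even or odd, since the largest multiple of $\check\rho$ strictly less than $(\check\rho+\hat\rho)/2$ equals $\hat\rho/2$ in the even case and $(\hat\rho-\check\rho)/2$ in the odd case. Everything else is the same routine arithmetic already performed for Lemma~\ref{lem:v:cover}.
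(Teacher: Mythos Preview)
Your proposal is correct and follows essentially the same approach as the paper: the paper defines $\check x$ by exactly the same coordinatewise rounding-with-tie-break-toward-$\hat x$ formula, verifies the same two inequalities $|\check x_j-\hat x_j|<(\hat\rho+\check\rho)/2$ and $|v_j-\check x_j|\le\check\rho/2$ through the same four-case split, and concludes via the same integrality argument using $\|\check x-\hat x\|_\infty\in\check\rho\N_0$. Your observation that the non-tie cases follow directly from the triangle inequality and that the tie cases yield the sharper bound $(\hat\rho-\check\rho)/2$ is a minor streamlining, but the argument is otherwise identical.
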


\begin{proof}
Consider the point $\check x\in\Delta_{\check\rho}$ given by
\[
\check x_j:=\begin{cases}
\lfloor\tfrac{v_j}{\check\rho}\rfloor\check\rho,
&\tfrac{v_j}{\check\rho}-\lfloor\tfrac{v_j}{\check\rho}\rfloor<\tfrac12,\\
\lfloor\tfrac{v_j}{\check\rho}\rfloor\check\rho,
&\tfrac{v_j}{\check\rho}-\lfloor\tfrac{v_j}{\check\rho}\rfloor=\tfrac12,\ \hat x_j<v_j,\\
\lceil\tfrac{v_j}{\check\rho}\rceil\check\rho,
&\tfrac{v_j}{\check\rho}-\lfloor\tfrac{v_j}{\check\rho}\rfloor=\tfrac12,\ v_j\le\hat x_j,\\
\lceil\tfrac{v_j}{\check\rho}\rceil\check\rho,
&\tfrac{v_j}{\check\rho}-\lfloor\tfrac{v_j}{\check\rho}\rfloor>\tfrac12
\end{cases}
\]
for all $j\in[1,m]$. 
We first check that for all $j\in[1,m]$, we have
\begin{align} 
&|\check x_j-\hat x_j|<\tfrac{\hat\rho}{2}+\tfrac{\check\rho}{2},\label{aim:1a}\\
&|v_j-\check x_j|\le\tfrac{\check\rho}{2}.\label{aim:2a}
\end{align}
Case 1: When $\tfrac{v_j}{\check\rho}-\lfloor\tfrac{v_j}{\check\rho}\rfloor<\tfrac12$, 
then \eqref{aim:1a} follows from 
$\check x_j-\hat x_j=\lfloor\tfrac{v_j}{\check\rho}\rfloor\check\rho-\hat x_j$
and
\[
-\tfrac{\hat\rho}{2}-\tfrac{\check\rho}{2}
\le v_j-\tfrac{\check\rho}{2}-\hat x_j
<\lfloor\tfrac{v_j}{\check\rho}\rfloor\check\rho-\hat x_j
\le v_j-\hat x_j
\le\tfrac{\hat\rho}{2}.
\]
Inequality \eqref{aim:2a} holds because
$|v_j-\check x_j|
=|v_j-\lfloor\tfrac{v_j}{\check\rho}\rfloor\check\rho|
=|\tfrac{v_j}{\check\rho}-\lfloor\tfrac{v_j}{\check\rho}\rfloor|\check\rho
<\tfrac{\check\rho}{2}$.

Case 2: When $\tfrac{v_j}{\check\rho}-\lfloor\tfrac{v_j}{\check\rho}\rfloor=\tfrac12$ 
and $\hat x_j<v_j$, then inequality \eqref{aim:1a} follows from
$\check x_j-\hat x_j
=\lfloor\tfrac{v_j}{\check\rho}\rfloor\check\rho-\hat x_j
=v_j-\hat x_j-\tfrac{\check\rho}{2}$ 
and
\[
-\tfrac{\check\rho}{2}
<v_j-\hat x_j-\tfrac{\check\rho}{2}
<v_j-\hat x_j\le\tfrac{\hat\rho}{2}.
\]
Inequality \eqref{aim:2a} holds because
$|v_j-\check x_j|
=|\tfrac{v_j}{\check\rho}-\lfloor\tfrac{v_j}{\check\rho}\rfloor|\check\rho
=\tfrac{\check\rho}{2}$.

Case 3: When $\tfrac{v_j}{\check\rho}-\lfloor\tfrac{v_j}{\check\rho}\rfloor=\tfrac12$
and $v_j\le\hat x_j$, then inequality \eqref{aim:1a} follows from
$\check x_j-\hat x_j
=\lceil\tfrac{v_j}{\check\rho}\rceil\check\rho-\hat x_j
=v_j+\tfrac{\check\rho}{2}-\hat x_j$
and
\[
-\tfrac{\hat\rho}{2}+\tfrac{\check\rho}{2}
\le v_j+\tfrac{\check\rho}{2}-\hat x_j
\le\tfrac{\check\rho}{2}.
\]
Inequality \eqref{aim:2a} holds because
$|v_j-\check x_j|
=|\tfrac{v_j}{\check\rho}-\lceil\tfrac{v_j}{\check\rho}\rceil|\check\rho
=\tfrac{\check\rho}{2}$.

Case 4: When $\tfrac{v_j}{\check\rho}-\lfloor\tfrac{v_j}{\check\rho}\rfloor>\tfrac12$,
then \eqref{aim:1a} follows from 
$\check x_j-\hat x_j
=\lceil\tfrac{v_j}{\check\rho}\rceil\check\rho-\hat x_j$
and
\[
-\tfrac{\hat\rho}{2}
\le v_j-\hat x_j
\le\lceil\tfrac{v_j}{\check\rho}\rceil\check\rho-\hat x_j
<v_j+\tfrac{\check\rho}{2}-\hat x_j
\le\tfrac{\hat\rho}{2}+\tfrac{\check\rho}{2}.
\]
Inequality \eqref{aim:2a} holds because
$|v_j-\check x_j|
=|\tfrac{v_j}{\check\rho}-\lceil\tfrac{v_j}{\check\rho}\rceil|\check\rho
<\tfrac{\check\rho}{2}$.

Since $\|\check x-\hat x\|_\infty\in\check\rho\N_0$, the statement of the 
lemma follows from inequalities \eqref{aim:1a} and \eqref{aim:2a}.
\end{proof}

The properties of the operator $I$ complement those of the operator $R$.

\begin{theorem}\label{thm:I:properties}
The operator $I:S_{\hat\rho}\to S_{\check\rho}$ satisfies 
$I(\Delta_{\hat\rho})=\Delta_{\check\rho}$.
It has the set-theoretical properties
\begin{align}
&\forall\,\hat M,\hat M'\in S_{\hat\rho}:
&&I(\hat M\cup\hat M')=I(\hat M)\cup I(\hat M'),&
\label{I:union}\\
&\forall\,\hat M,\hat M'\in S_{\hat\rho}:
&&(\hat M\subset\hat M')\Rightarrow(I(\hat M)\subset I(\hat M')),&
\label{I:monotone}\\
&\forall\,\hat M\in S_{\hat\rho},\ 
\forall\,\hat x\in\Delta_{\hat\rho}:
&&I(\hat x+\hat M)=\hat x+I(\hat M),&
\label{I:Minkowski:sum}
\end{align}
the approximation properties
\begin{align}
&\forall\,\hat M\in S_{\hat\rho}:&&\dist_H(I(\hat M),\hat M)\le\hat\rho/2,&
\label{I:Hausdorff}\\
&\forall\,\hat M\in S_{\hat\rho}:
&&I(\hat M)\in V_{\hat\rho}^{\check\rho}(\hat M),&
\label{I:voronoi:overapproximation}\\
&\forall\,\hat M\in S_{\hat\rho},\ 
\forall\,\check M\in V_{\hat\rho}^{\check\rho}(\hat M):
&&I(\hat M)\subset\check M,&
\label{I:minimal:voronoi:overapproximation}
\end{align}
and the topology-related properties
\begin{align}
&\forall\,\hat M\in C_{\hat\rho}:
&&I(\hat M)\in C_{\check\rho},&\label{I:connected}\\
&\forall\,\hat M\in S_{\hat\rho}:
&&\partial^0_{\check\rho}I(\hat M)\subset I(\partial^0_{\hat\rho}\hat M).&
\label{I:new:boundary:in:old:boundary}
\end{align}
\end{theorem}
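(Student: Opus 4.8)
The plan is to follow the template of Theorem~\ref{thm:R:properties}, exploiting that, unlike $R$, the operator $I$ is \emph{expanding}: since $\hat M\subset\Delta_{\hat\rho}\subset\Delta_{\check\rho}$ and trivially $\hat M\subset B_{\hat\rho/2}(\hat M)$, we have $\hat M\subset I(\hat M)$ for every $\hat M\in S_{\hat\rho}$. This alone shows that $I$ is well-defined, yields \eqref{I:Hausdorff} (the semi-distance from $\hat M$ to $I(\hat M)$ vanishes, while $I(\hat M)\subset B_{\hat\rho/2}(\hat M)$ by \eqref{def:I} controls the other semi-distance), and, together with $B_{\hat\rho/2}(\Delta_{\hat\rho})=\R^m$, gives $I(\Delta_{\hat\rho})=\Delta_{\check\rho}$. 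Properties \eqref{I:union}, \eqref{I:monotone} and \eqref{I:Minkowski:sum} are elementary, using $B_\delta(A\cup B)=B_\delta(A)\cup B_\delta(B)$, monotonicity of $B_\delta(\cdot)$, the identity $B_\delta(\hat x+A)=\hat x+B_\delta(A)$, and $\hat x\in\Delta_{\hat\rho}\subset\Delta_{\check\rho}$.

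For the two Voronoi properties I would proceed as for their $R$-counterparts. Property \eqref{I:voronoi:overapproximation} reduces, via \eqref{def:V}, to $B_{\hat\rho/2}(\hat M)\subset B_{\check\rho/2}\big(B_{\hat\rho/2}(\hat M)\cap\Delta_{\check\rho}\big)$, which follows exactly as \eqref{voronoi:overapproximation} did, with Lemma~\ref{lem:v:cover} replaced by Lemma~\ref{lem:v:I:cover}: for $v\in B_{\hat\rho/2}(\hat M)$ pick $\hat x\in\hat M$ with $v\in B_{\hat\rho/2}(\hat x)$ and then $\check x\in B_{\hat\rho/2}(\hat x)\cap B_{\check\rho/2}(v)\cap\Delta_{\check\rho}$, so that $\check x\in I(\hat M)$ and $v\in B_{\check\rho/2}(\check x)$. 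Property \eqref{I:minimal:voronoi:overapproximation} is actually easier than its $R$-analogue \eqref{minimal:voronoi:overapproximation} and needs no perturbation argument: if $\check M\in V_{\hat\rho}^{\check\rho}(\hat M)$, then $B_{\hat\rho/2}(\hat M)\subset B_{\check\rho/2}(\check M)$ by \eqref{def:V}, hence $I(\hat M)\subset B_{\check\rho/2}(\check M)\cap\Delta_{\check\rho}=\check M$, the last equality because the distance from a point of $\Delta_{\check\rho}$ to $\check M\subset\Delta_{\check\rho}$ lies in $\check\rho\N_0$ and so cannot take any value in $(0,\check\rho)$.

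For \eqref{I:connected}, given a connected $\hat M$ and $\check x,\check x'\in I(\hat M)$, I would choose $\hat x,\hat x'\in\hat M$ within $\hat\rho/2$ of $\check x,\check x'$, a coarse path $\hat p\in P_{\hat\rho}(\hat x,\hat x')$ inside $\hat M$, and concatenate the straight fine paths $\phi(\,\cdot\,;\check x,\hat x;\check\rho)$, the $\phi(\,\cdot\,;\hat p(\ell-1),\hat p(\ell);\check\rho)$ for $\ell=1,\dots,L(\hat p)$, and $\phi(\,\cdot\,;\hat x',\check x';\check\rho)$. By \eqref{distances:to:endpoints:0}, \eqref{distances:to:endpoints:1} and \eqref{path:stays:in:square}, each vertex of a straight piece lies within $\hat\rho/2$ of one of the two $\hat M$-points it joins (for the middle pieces this uses $\|\hat p(\ell)-\hat p(\ell-1)\|_\infty\le\hat\rho$), hence inside $I(\hat M)$; since the concatenation has step size $\check\rho$, it witnesses $I(\hat M)\in C_{\check\rho}$.

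The last property, \eqref{I:new:boundary:in:old:boundary}, is the step I expect to require the most care, and here I would argue directly rather than imitating the parity case distinction used for \eqref{new:boundary:in:old:boundary}. Let $\check x\in\partial^0_{\check\rho}I(\hat M)$, pick $\check z\in I(\hat M)^c\cap\Delta_{\check\rho}$ with $\|\check x-\check z\|_\infty=\check\rho$, and pick $\hat x\in\hat M$ with $\|\check x-\hat x\|_\infty\le\hat\rho/2$ (possible since $\check x\in I(\hat M)$). The crucial claim is that $\hat x\in\partial^0_{\hat\rho}\hat M$. If it were not, then by \eqref{def:partial:1} every coarse neighbour of $\hat x$ would lie in $\hat M$, i.e.\ $B_{\hat\rho}(\hat x)\cap\Delta_{\hat\rho}\subset\hat M$; a coordinatewise rounding argument (assigning to $v\in B_{\hat\rho}(\hat x)$ the coarse point $\hat v$ with $\hat v_j\in\{\hat x_j,\hat x_j\pm\hat\rho\}$ nearest $v_j$) then upgrades this to $B_{\hat\rho}(\hat x)\subset B_{\hat\rho/2}(\hat M)$, whence $B_{\hat\rho}(\hat x)\cap\Delta_{\check\rho}\subset I(\hat M)$; but $\|\check z-\hat x\|_\infty\le\|\check z-\check x\|_\infty+\|\check x-\hat x\|_\infty\le\check\rho+\hat\rho/2\le\hat\rho$ since $\hat\rho\in\check\rho\N_2$, so $\check z\in I(\hat M)$, contradicting the choice of $\check z$. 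Hence $\hat x\in\partial^0_{\hat\rho}\hat M$, and since $\|\check x-\hat x\|_\infty\le\hat\rho/2$ we obtain $\check x\in B_{\hat\rho/2}(\partial^0_{\hat\rho}\hat M)\cap\Delta_{\check\rho}=I(\partial^0_{\hat\rho}\hat M)$. The main obstacle is isolating this automatic boundary-membership of $\hat x$, i.e.\ the implication $B_{\hat\rho}(\hat x)\cap\Delta_{\hat\rho}\subset\hat M\Rightarrow\check z\in I(\hat M)$, which hinges on the resolution gap $\check\rho\le\hat\rho/2$.
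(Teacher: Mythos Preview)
Your proposal is correct and, for all properties except \eqref{I:new:boundary:in:old:boundary}, follows the paper's proof essentially verbatim: the same expansion observation $\hat M\subset I(\hat M)$ for well-definedness and \eqref{I:Hausdorff}, the same use of Lemma~\ref{lem:v:I:cover} for \eqref{I:voronoi:overapproximation}, the same discreteness argument for \eqref{I:minimal:voronoi:overapproximation}, and the same concatenation of straight fine paths for \eqref{I:connected}.

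For \eqref{I:new:boundary:in:old:boundary} you take a genuinely different route. The paper argues constructively: it rounds $\check z$ coordinatewise to a point $\hat z\in\Delta_{\hat\rho}$ with $\|\hat z-\check z\|_\infty\le\hat\rho/2$, uses $\dist(\check z,\hat M)>\hat\rho/2$ to conclude $\hat z\in\hat M^c$, and then the chain $\|\hat x-\hat z\|_\infty\le\hat\rho/2+\check\rho+\hat\rho/2<2\hat\rho$ together with $\|\hat x-\hat z\|_\infty\in\hat\rho\N_1$ forces $\|\hat x-\hat z\|_\infty=\hat\rho$, whence $\hat x\in\partial^0_{\hat\rho}\hat M$. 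You instead argue by contraposition: if $\hat x$ were interior, then $B_{\hat\rho}(\hat x)\cap\Delta_{\hat\rho}\subset\hat M$, and your coordinatewise rounding upgrades this to $B_{\hat\rho}(\hat x)\subset B_{\hat\rho/2}(\hat M)$, which together with $\|\check z-\hat x\|_\infty\le\check\rho+\hat\rho/2\le\hat\rho$ places $\check z$ in $I(\hat M)$, a contradiction. Both arguments hinge on the same rounding-to-$\Delta_{\hat\rho}$ step and the resolution gap $\hat\rho\ge 2\check\rho$; the paper's version is slightly more direct since it produces the witness $\hat z$ for $\hat x\in\partial^0_{\hat\rho}\hat M$ explicitly, while your version isolates the cleaner intermediate statement $B_{\hat\rho}(\hat x)\cap\Delta_{\hat\rho}\subset\hat M\Rightarrow B_{\hat\rho}(\hat x)\cap\Delta_{\check\rho}\subset I(\hat M)$.
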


\begin{proof}
The proofs of the statements \eqref{I:union}, \eqref{I:monotone} and \eqref{I:Minkowski:sum}
are elementary, and it is obvious that $I(\Delta_{\hat\rho})=\Delta_{\check\rho}$.

\medskip

Let $\hat M\in S_{\hat\rho}$.
Since $\hat M\in S_{\check\rho}$, we have $\hat M\subset I(\hat M)$, so the operator 
$I$ is well-defined and we have $\dist(\hat M,I(\hat M))=0$.
We see directly from \eqref{def:I} that 
$\dist(I(\hat M),\hat M)\le\hat\rho/2$.
All in all, we have verified \eqref{I:Hausdorff}.

\medskip 

In view of \eqref{def:I} and \eqref{def:V}, statement \eqref{I:voronoi:overapproximation} 
is equivalent with
\[B_{\hat\rho/2}(\hat M)\subset B_{\check\rho/2}(B_{\hat\rho/2}(\hat M)\cap\Delta_{\check\rho}))
\quad\forall\hat M\in S_{\hat\rho}.\]
To check the above inclusion,
let $\hat M\in S_{\hat\rho}$ and $v\in B_{\hat\rho/2}(\hat M)$.
Then there exists $\hat x\in\hat M$ with $v\in B_{\hat\rho/2}(\hat x)$.
By Lemma \ref{lem:v:I:cover}, there exists 
$\check x\in B_{\hat\rho/2}(\hat x)\cap\Delta_{\check\rho}$
with $v\in B_{\check\rho/2}(\check x)$.
But then, as desired, we have
\[v\in B_{\check\rho/2}(\check x)
\subset B_{\check\rho/2}(B_{\hat\rho/2}(\hat x)\cap\Delta_{\check\rho}) 
\subset B_{\check\rho/2}(B_{\hat\rho/2}(\hat M)\cap\Delta_{\check\rho})).\] 

\medskip

We check \eqref{I:minimal:voronoi:overapproximation}.
Let $\hat M\in S_{\hat\rho}$, let $\check M\in S_{\check\rho}$, and assume that there exists
$\check x\in I(\hat M)\cap\check M^c$.
Then $\check x\in\Delta_{\check\rho}$ and $\check M\subset\Delta_{\check\rho}$ imply 
$\check x\in B_{\check\rho/2}(\check M)^c$.
Since $\check x\in I(\hat M)\subset B_{\hat\rho/2}(\hat M)$, it follows that 
$\check M\notin V_{\hat\rho}^{\check\rho}(\hat M)$.

\medskip

We check \eqref{I:connected}.
Let $\hat M\in C_{\hat\rho}$, and let $\check x\in I(\hat M)$ and $\check x'\in I(\hat M)$.
By \eqref{def:I} there exist $\hat x\in \hat M$ and $\hat x'\in \hat M$ with $\|\hat x-\check x\|_\infty\le{\hat\rho}/2$ 
and $\|\hat x'-\check x'\|_\infty\le{\hat\rho}/2$.
Since $\hat M\in C_{\hat\rho}$, there exists $\hat{p}\in P_{\hat\rho}(\hat x,\hat x')$ with $\hat{p}(\ell)\in \hat M$ for all 
$\ell\in[0,L(\hat{p})]$.
With $\phi$ as in Lemma \ref{lem:grid:line}, we define the paths
\[\check{p}_\ell:=\phi(\,\cdot\,;\hat{p}(\ell),\hat{p}(\ell+1);\check\rho),\quad\ell\in[0,L(\hat{p})),\]
and using concatenation as specified in Definition \ref{def:concatenate},
we define
\[\check{p}:=\phi(\,\cdot\,;\hat x',\check x';\check\rho)\circ \check{p}_{L(\hat{p})-1}\circ \ldots\circ \check{p}_1\circ \check{p}_0\circ 
\phi(\,\cdot\,;\check x,\hat x;\check\rho)
\in P_{\check\rho}(\check x,\check x').\]
By construction, for all $\ell\in[0,L(\hat{p}))$, we have 
$\check{p}_\ell(0)=\hat{p}(\ell)\in \hat M\subset I(\hat M)$ and
$\check{p}_\ell(L(\check{p}_\ell))=\hat{p}(\ell+1)\in \hat M\subset I(\hat M)$.
For all $\ell\in[0,L(\hat{p}))$ and every $\ell'\in(0,L(\check{p}_\ell))$,
%we have $\check{p}_\ell(\ell')\in\Delta_{\check\rho}$ and 
statements \eqref{distances:to:endpoints:0} 
and \eqref{distances:to:endpoints:1} imply that
\[\|\check{p}_\ell(\ell')-\hat{p}(\ell)\|_\infty+\|\check{p}_\ell(\ell')-\hat{p}(\ell+1)\|_\infty
=\|\hat{p}(\ell)-\hat{p}(\ell+1)\|_\infty\le{\hat\rho}.\]
As a consequence, we have
\[\check{p}_\ell(\ell')\in B_{{\hat\rho}/2}(\hat{p}(\ell))\cup B_{{\hat\rho}/2}(\hat{p}(\ell+1))\subset B_{{\hat\rho}/2}(\hat M).\]
Since $\check{p}_\ell(\ell')\in\Delta_{\check\rho}$, %by construction, 
we have $\check{p}_\ell(\ell')\in I(\hat M)$.
Similarly, statement \eqref{distances:to:endpoints:1} 
yields
\[\|\phi(\ell';\check x,\hat x;\check\rho)-\hat x\|_\infty\le\|\check x-\hat x\|_\infty\le{\hat\rho}/2
\quad\forall\,\ell'\in[0,L(\phi(\,\cdot\,;\check x,\hat x;\check\rho)],\]
and since $\phi(\ell';\check x,\hat x;\check\rho)\in\Delta_{\check\rho}$, we find that
$\phi(\ell';\check x,\hat x;\check\rho)\in I(\hat M)$ for every \linebreak
$\ell'\in[0,L(\phi(\,\cdot\,;\check x,\hat x;\check\rho)]$.
The same arguments apply to $\phi(\,\cdot\,;\hat x',\check x';\check\rho)$.
All in all, we have $\check{p}(\ell')\in I(\hat M)$ for all $\ell'\in[0,L(\check{p})]$,
and hence $I(\hat M)\in C_{\check\rho}$.

\medskip

We check \eqref{I:new:boundary:in:old:boundary}.
Let $\check{x}\in\partial^0_{\check\rho}I(\hat{M})$.
By \eqref{def:partial:1} and \eqref{def:I}, 
there exists $\hat{x}\in \hat{M}$ with $\|\hat{x}-\check{x}\|_\infty\le\hat{\rho}/2$.
By \eqref{def:partial:1}, there exists $\check{z}\in I(\hat{M})^c\cap\Delta_{\check\rho}$
with $\|\check{x}-\check{z}\|_\infty=\check\rho$.
By \eqref{def:I}, we have $\dist(\check{z},\hat{M})>\hat{\rho}/2$.
The point $\hat{z}\in\Delta_{\hat{\rho}}$ given by $\hat{z}_j=\rd(\check{z}_j/\hat{\rho})\hat{\rho}$ for all 
$j\in[1,m]$ satisfies $\|\hat{z}-\check{z}\|_\infty\le\hat{\rho}/2$.
%Taking $\check{z}\in I_{\hat{\rho}}^{\check\rho}(\hat{M})^c$ into account, 
Since
\[\dist(\hat{z},\hat{M})\ge\dist(\check{z},\hat{M})-\|\hat{z}-\check{z}\|_\infty>0,\]
we have $\hat{z}\in\hat{M}^c\cap\Delta_{\hat{\rho}}$.
Since $\hat{x}\neq \hat{z}$, since 
\[
\|\hat{x}-\hat{z}\|_\infty
\le\|\hat{x}-\check{x}\|_\infty+\|\check{x}-\check{z}\|_\infty+\|\check{z}-\hat{z}\|_\infty
\le\hat{\rho}+\check\rho<2\hat{\rho}
\]
and since $\|\hat{x}-\hat{z}\|_\infty\in\hat{\rho}\N$, we have $\|\hat{x}-\hat{z}\|_\infty=\hat{\rho}$.
Again by \eqref{def:partial:1}, we have $\hat{x}\in\partial^0_{\hat{\rho}}\hat{M}$.
All in all, we have shown that 
$\check{x}\in B_{\hat{\rho}/2}(\hat{x})\cap\Delta_{\check\rho}\subset I(\partial^0_{\hat{\rho}} \hat{M})$.
\end{proof}

Ideally, the composition $R\circ I:S_{\hat\rho}\to S_{\hat\rho}$
should be the identity.

\begin{theorem}\label{thm:almost:id}
The operators $R:S_{\check{\rho}}\to S_{{\hat{\rho}}}$ and $I:S_{\hat{\rho}}\to S_{\check{\rho}}$ satisfy
\begin{align}
&({\hat{\rho}}/\check{\rho}\in 2\N_1+1)
&&\Rightarrow
&&(\forall\,\hat{M}\in S_{\hat{\rho}}:\ R(I(\hat{M}))=\hat{M}),&
\label{I:R:id}\\
&({\hat{\rho}}/\check{\rho}\in 2\N_1)
&&\Rightarrow 
&&(\forall\,\hat{M}\in S_{\hat{\rho}}:\ R(I(\hat{M}))=B_{\hat{\rho}}(\hat{M})\cap\Delta_{\hat{\rho}}).&
\label{I:R:almost:id}
\end{align}
\end{theorem}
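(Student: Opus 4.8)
The plan is to reduce both assertions to the case of a one-point set. First I would observe that $R\circ I$ is determined on all of $S_{\hat\rho}$ by its values on singletons: unwinding \eqref{def:R} and \eqref{def:I}, a point $\hat y\in\Delta_{\hat\rho}$ lies in $R(I(\hat M))$ if and only if there exist $\check x\in\Delta_{\check\rho}$ and $\hat x\in\hat M$ with $\|\hat x-\check x\|_\infty\le\hat\rho/2$ and $\|\hat y-\check x\|_\infty\le\hat\rho/2$, so that
\[R(I(\hat M))=\bigcup_{\hat x\in\hat M}R(I(\{\hat x\})),\]
in accordance with \eqref{R:union} and \eqref{I:union}. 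Using \eqref{R:Minkowski:sum} and \eqref{I:Minkowski:sum}, one has $R(I(\{\hat x\}))=\hat x+R(I(\{0\}))$ for every $\hat x\in\Delta_{\hat\rho}$, and since $B_{\hat\rho}(\hat M)\cap\Delta_{\hat\rho}=\bigcup_{\hat x\in\hat M}\bigl(B_{\hat\rho}(\hat x)\cap\Delta_{\hat\rho}\bigr)$, the theorem reduces to computing $R(I(\{0\}))$ and checking that it equals $\{0\}$ when $\hat\rho/\check\rho$ is odd and $B_{\hat\rho}(0)\cap\Delta_{\hat\rho}$ when $\hat\rho/\check\rho$ is even.

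The core computation is then purely arithmetic. Writing $N:=\hat\rho/\check\rho\in\N_2$, so that $\Delta_{\check\rho}=\check\rho\Z^m$ and $\Delta_{\hat\rho}=N\check\rho\Z^m$, I would first read off from \eqref{def:I} that
\[I(\{0\})=B_{\hat\rho/2}(0)\cap\Delta_{\check\rho}=\{\check\rho k:k\in\Z^m,\ \|k\|_\infty\le\lfloor N/2\rfloor\},\]
the last step using that $\|k\|_\infty$ is a nonnegative integer to pass from $\|k\|_\infty\le N/2$ to $\|k\|_\infty\le\lfloor N/2\rfloor$. For $\hat y=N\check\rho m\in\Delta_{\hat\rho}$ with $m\in\Z^m$, optimizing each coordinate separately yields
\[\dist\bigl(\hat y,I(\{0\})\bigr)=\check\rho\max_{j\in[1,m]}\max\{0,\ N|m_j|-\lfloor N/2\rfloor\},\]
so by \eqref{def:R} one gets $\hat y\in R(I(\{0\}))$ if and only if $N|m_j|\le N/2+\lfloor N/2\rfloor$ for all $j\in[1,m]$.

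It then remains to read off the two cases from this inequality. If $N=2n+1$, then $N/2+\lfloor N/2\rfloor=2n+\tfrac12$, whereas $N|m_j|\ge N=2n+1$ as soon as $m_j\neq0$; hence the inequality forces $m=0$ and $R(I(\{0\}))=\{0\}$, which is \eqref{I:R:id}. If $N=2n$, then $N/2+\lfloor N/2\rfloor=2n=N$, so the inequality reads $|m_j|\le1$ for all $j$, i.e.\ $\|m\|_\infty\le1$; hence $R(I(\{0\}))=\{\hat\rho m:\|m\|_\infty\le1\}=B_{\hat\rho}(0)\cap\Delta_{\hat\rho}$, which is \eqref{I:R:almost:id}. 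I do not expect a genuine obstacle: the only care needed is in the two places where integrality is invoked — collapsing the ball constraint on $k$, and the fact that the point of $I(\{0\})$ closest to $N\check\rho m$ is obtained by clipping each coordinate of $m$ toward $0$ at level $\lfloor N/2\rfloor$ — and everything else is bookkeeping.
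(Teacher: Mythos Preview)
Your argument is correct. You reduce to the single computation of $R(I(\{0\}))$ via the union decomposition (justified directly from the pointwise description of $R\circ I$, not merely from the binary statements \eqref{R:union} and \eqref{I:union}) together with translation invariance \eqref{R:Minkowski:sum}, \eqref{I:Minkowski:sum}, and then settle the two parities by an explicit integer inequality $N|m_j|\le N/2+\lfloor N/2\rfloor$. The paper proceeds differently: it works with an arbitrary $\hat M$ and proves the two inclusions by direct element chasing, the key observations being that when $\hat\rho/\check\rho$ is odd one has $\hat\rho/2\notin\check\rho\N_0$, forcing $\|\hat x-\check x\|_\infty<\hat\rho/2$ and hence $\hat x=\hat x'$, while when $\hat\rho/\check\rho$ is even the midpoint $(\hat x+\hat z)/2$ lands on $\Delta_{\check\rho}$, which immediately witnesses the reverse inclusion. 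Your route front-loads the structural properties already proved in Theorems \ref{thm:R:properties} and \ref{thm:I:properties} and reduces everything to a clean coordinate computation; the paper's route is more self-contained and slightly shorter, extracting the parity distinction from a single geometric fact in each case rather than from a full distance formula. One cosmetic point: you overload $m$ as both the ambient dimension and the integer vector indexing $\hat y$; this does not affect the argument but is worth renaming.
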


\begin{proof}
We check \eqref{I:R:id}.
Let ${\hat{\rho}}/\check{\rho}\in 2\N_1+1$, let $\hat{M}\in S_{\hat{\rho}}$,
and let $\hat{x}\in R(I(\hat{M}))$.
By \eqref{def:R}, we have $\hat{x}\in\Delta_{\hat{\rho}}$, and there exists $\check{x}\in I(\hat{M})$
with $\|\hat{x}-\check{x}\|_\infty\le{\hat{\rho}}/2$.
By \eqref{def:I}, we have $\check{x}\in\Delta_{\check{\rho}}$, and there exists $\hat{x}'\in \hat{M}$ with
$\|\check{x}-\hat{x}'\|_\infty\le{\hat{\rho}}/2$.
Since $\hat{x}\in\Delta_{\check{\rho}}$ and $\check{x}\in\Delta_{\check{\rho}}$, we have 
$\|\hat{x}-\check{x}\|_\infty\in\check{\rho}\N_0\cap[0,{\hat{\rho}}/2]$, and since ${\hat{\rho}}/2\notin\check{\rho}\N_0$,
we have $\|\hat{x}-\check{x}\|_\infty<{\hat{\rho}}/2$.
Hence
\[\|\hat{x}-\hat{x}'\|_\infty\le\|\hat{x}-\check{x}\|_\infty+\|\check{x}-\hat{x}'\|_\infty<{\hat{\rho}},\]
and since $\hat{x}\in\Delta_{\hat{\rho}}$ and $\hat{x}'\in\Delta_{\hat{\rho}}$, this implies $\hat{x}=\hat{x}'$.
All in all, we see that
\begin{equation}\label{local:5}
R(I(\hat{M}))\subset \hat{M}.
\end{equation}
Conversely, when $\hat{x}\in \hat{M}$, then by \eqref{def:I}, we have $\hat{x}\in I(\hat{M})$,
and by \eqref{def:R}, we have $\hat{x}\in R(I(\hat{M}))$.
Hence
\begin{equation}\label{local:6}
\hat{M}\subset R(I(\hat{M})).
\end{equation}
Combining \eqref{local:5} and \eqref{local:6} yields \eqref{I:R:id}.

\medskip

We check \eqref{I:R:almost:id}.
Let ${\hat{\rho}}/\check{\rho}\in 2\N_1$, and let $\hat{M}\in S_{\hat{\rho}}$.
It follows %immediately 
from \eqref{def:R} and \eqref{def:I} that 
\begin{equation}\label{local:7}
R(I(\hat{M}))\subset B_{\hat{\rho}}(\hat{M})\cap\Delta_{\hat{\rho}}.
\end{equation}
Let $\hat{z}\in B_{\hat{\rho}}(\hat{M})\cap\Delta_{\hat{\rho}}$, 
and let $\hat{x}\in \hat{M}$ with $\|\hat{x}-\hat{z}\|_\infty\le{\hat{\rho}}$.
Since ${\hat{\rho}}/\check{\rho}\in 2\N_1$, we have 
$\check{x}:=(\hat{x}+\hat{z})/2\in\Delta_{\check{\rho}}$, and hence
$\check{x}\in B_{{\hat{\rho}}/2}(\hat{x})\cap\Delta_{\check{\rho}}\subset I(\hat{M})$.
It follows that 
$\hat{z}\in B_{{\hat{\rho}}/2}(\check{x})\cap\Delta_{\hat{\rho}}\subset R(I(\hat{M}))$,
and we have shown that
\begin{equation}\label{local:8}
B_{\hat{\rho}}(\hat{M})\cap\Delta_{\hat{\rho}}\subset R(I(\hat{M})).
\end{equation}
Combining \eqref{local:7} and \eqref{local:8} yields \eqref{I:R:almost:id}.
\end{proof}

Property \eqref{I:R:almost:id} could be considered a flaw 
in the construction of the operators $R$ and $I$.
However, it is impossible to improve \eqref{I:R:almost:id}  
without sacrificing the covering properties
\eqref{voronoi:overapproximation} and \eqref{I:voronoi:overapproximation}.

\begin{theorem}\label{thm:no:better:covering:operators}
When $\hat\rho/\check{\rho}\in 2\N$, there is no pair %$(R,I)$ 
of operators $R':S_{\check{\rho}}\to S_{\hat\rho}$
and $I':S_{\hat\rho}\to S_{\check{\rho}}$ such that 
\begin{align}
&\forall\,\check{M}\in S_{\check{\rho}}:&&R'(\check{M})\in V_{\check{\rho}}^{\hat\rho}(\check{M}),&\label{R:covers}\\
&\forall\,\hat{M}\in S_{\hat\rho}:&&I'(\hat{M})\in V_{\hat\rho}^{\check{\rho}}(\hat{M}),&\label{I:covers}\\
&\forall\,\hat{M}\in S_{\hat\rho}:&&R'(I'(\hat{M}))=\hat{M}.&\label{R:I:identity}
\end{align}
\end{theorem}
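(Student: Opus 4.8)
The plan is to argue by contradiction using the simplest possible test set — a single point of the coarse grid — and to show that the two covering requirements \eqref{R:covers} and \eqref{I:covers} are incompatible with $R'$ being a left inverse of $I'$ as in \eqref{R:I:identity}.

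Since the standing assumptions \eqref{fixed:grids} give $\hat\rho\in\check\rho\N_2$, the hypothesis $\hat\rho/\check\rho\in 2\N$ means $\hat\rho=2n\check\rho$ for some $n\in\N_1$. Suppose operators $R'$ and $I'$ as in the statement exist, and set $\hat M:=\{0\}\in S_{\hat\rho}$. The first step is to unwind the definition \eqref{def:V} of $V$: property \eqref{I:covers} gives $B_{\hat\rho/2}(\hat M)\subset B_{\check\rho/2}(I'(\hat M))$, while property \eqref{R:covers}, applied to the coarse-grid set $\check M:=I'(\hat M)\in S_{\check\rho}$ rather than to $\hat M$ itself, gives $B_{\check\rho/2}(I'(\hat M))\subset B_{\hat\rho/2}(R'(I'(\hat M)))$, which by \eqref{R:I:identity} equals $B_{\hat\rho/2}(\hat M)$. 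Hence
\[B_{\check\rho/2}(I'(\hat M))=B_{\hat\rho/2}(\hat M)=\{v\in\R^m:\|v\|_\infty\le n\check\rho\}.\]

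The second step is to test this equality at the corner $w:=(n\check\rho,\dots,n\check\rho)$ of the cube, which is a point of $\Delta_{\check\rho}$ with $\|w\|_\infty=n\check\rho=\hat\rho/2$. It lies in $B_{\check\rho/2}(I'(\hat M))$, so $\dist(w,I'(\hat M))\le\check\rho/2<\check\rho$; since $w$ and all points of $I'(\hat M)$ lie in $\check\rho\Z^m$, distances between them take values in $\check\rho\N_0$, and this forces $w\in I'(\hat M)$. But then $u:=w+\tfrac{\check\rho}{2}e_1$ satisfies $u\in B_{\check\rho/2}(\{w\})\subset B_{\check\rho/2}(I'(\hat M))$ while $\|u\|_\infty=n\check\rho+\tfrac{\check\rho}{2}>\hat\rho/2$, so $u\notin B_{\hat\rho/2}(\hat M)$, contradicting the displayed equality.

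I do not expect a substantial obstacle here; the points requiring care are to invoke \eqref{R:covers} with the coarse set $I'(\hat M)$ (not with $\hat M$), and to observe that a point at distance at most $\check\rho/2$ from a subset of $\check\rho\Z^m$ actually belongs to that subset. One could, if desired, first isolate the general fact that \eqref{R:covers}, \eqref{I:covers} and \eqref{R:I:identity} together force $B_{\check\rho/2}(I'(\hat M))=B_{\hat\rho/2}(\hat M)$ for every $\hat M\in S_{\hat\rho}$ and then specialise, but for the stated impossibility the single example $\hat M=\{0\}$ suffices; this is also precisely the mechanism behind the ``spill-over'' in property \eqref{I:R:almost:id}.
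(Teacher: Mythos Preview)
Your argument is correct and takes a genuinely different route from the paper's proof. The paper leverages the minimality properties \eqref{minimal:voronoi:overapproximation} and \eqref{I:minimal:voronoi:overapproximation} of the concrete operators $R$ and $I$ to obtain $I(\hat M)\subset I'(\hat M)$ and then $R(I(\hat M))\subset R'(I'(\hat M))$ via \eqref{R:monotone}; combined with the already proved identity \eqref{I:R:almost:id}, this gives $\hat M\subsetneq B_{\hat\rho}(\hat M)\cap\Delta_{\hat\rho}\subset R'(I'(\hat M))$ for every $\hat M\in S_{\hat\rho}$. Your proof, by contrast, never touches $R$ or $I$: you chain the two covering conditions directly through \eqref{R:I:identity} to force $B_{\check\rho/2}(I'(\hat M))=B_{\hat\rho/2}(\hat M)$, and then exhibit an explicit corner point that must lie in $I'(\hat M)$ yet whose $\check\rho/2$-neighbourhood overshoots the cube. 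This is more elementary and fully self-contained, at the price of being slightly less structural; the paper's version, on the other hand, makes transparent that the obstruction is exactly the spill-over already quantified in Theorem~\ref{thm:almost:id}, and shows in one stroke that \eqref{R:I:identity} fails for \emph{every} $\hat M$, not just a single test set.
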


\begin{proof}
Assume that $R'$ and $I'$ satisfy \eqref{R:covers} and \eqref{I:covers},
and let $\hat{M}\in S_{\hat\rho}$.
By %statements 
\eqref{I:covers} and \eqref{I:minimal:voronoi:overapproximation}, we have
$I(\hat{M})\subset I'(\hat{M})$.
By statement \eqref{R:monotone}, it follows that %we have
$R(I(\hat{M}))\subset R(I'(\hat{M}))$.
%By statements 
By \eqref{R:covers} and \eqref{minimal:voronoi:overapproximation},
we have $R(I'(\hat{M}))\subset R'(I'(\hat{M}))$.
Since $\hat{M}\in S_{\hat\rho}$, and because of \eqref{I:R:almost:id}, we obtain
\[\hat{M}\subsetneq B_{\hat\rho}(\hat{M})\cap\Delta_{\hat\rho}
=R(I(\hat{M}))
\subset R'(I'(\hat{M})),\]
and hence statement \eqref{R:I:identity} is invalid.
\end{proof}

While $R$ selects for every $\check M\in S_{\check\rho}$ 
an element $R(\check M)\in A_{\hat\rho}(\check M)$, i.e.\ a best approximation
to $\check M$ in $S_{\hat\rho}$, this approach leads to undesirable outcomes
for the operator $I$.

\begin{example}\label{ex:I}
For $\Delta_{\hat\rho}\in S_{\hat\rho}$, we also have $\Delta_{\hat\rho}\in S_{\check\rho}$ and hence
\[\Delta_{\hat\rho}=\argmin_{\check M\in S_{\check\rho}}\dist_H(\check M,\Delta_{\hat\rho}).\]
However, we have $\partial^0_{\hat\rho}\Delta_{\hat\rho}=\emptyset$ and
$\partial^0_{\check\rho}\Delta_{\hat\rho}=\Delta_{\hat\rho}$, which means that the 
best approximation of $\Delta_{\hat\rho}$ in $S_{\check\rho}$ violates %the stability 
condition \eqref{I:new:boundary:in:old:boundary} 
in an extreme way.
It also fails to meet conditions \eqref{I:voronoi:overapproximation} and 
\eqref{I:connected}. %, and is therefore not suitable for the applications we have in mind.
\end{example}

%Just like in the context of nested function spaces, studying the composition 
%$I\circ R:S_{\check\rho}\to S_{\check\rho}$ is not meaningful.

\section{Algorithms evaluating lifted operators}
\label{implementations}

Algorithms \ref{Alg:coarsen} and \ref{Alg:refine} are optimized for readability
in conjunction with the theory presented so far, and not for performance.
Note that the distances in Algorithm \ref{Alg:coarsen} are always small and hence 
can be determined by querying a small number of points.
A small computational example is provided in Figure \ref{fig:computational:example}.

\medskip

Given trivial input, our algorithms must compute trivial output.
Recall the operators $\partial R$ and $\partial I$ from \eqref{def:dR} and \eqref{def:dI}.

\begin{lemma}\label{lem:all:trivial}
We have $\partial R(\emptyset,\emptyset)=(\emptyset,\emptyset)$
and $\partial I(\emptyset,\emptyset)=(\emptyset,\emptyset)$.
\end{lemma}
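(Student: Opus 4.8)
The plan is to unwind the definitions of $\partial R$ and $\partial I$ from \eqref{def:dR} and \eqref{def:dI}, apply them to the trivial pair $(\emptyset,\emptyset)$, and use the explicit description of $\tr_\rho^{-1}$ on the trivial pair provided by Theorem \ref{thm:F:well:defined}, together with the behaviour of $R$ and $I$ on full grids from Theorems \ref{thm:R:properties} and \ref{thm:I:properties}.

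Concretely, for $\partial R$ I would first note that $\partial R(\emptyset,\emptyset)=\tr_{\hat\rho}(R(\tr_{\check\rho}^{-1}(\emptyset,\emptyset)))$. By Theorem \ref{thm:F:well:defined}, $\tr_{\check\rho}^{-1}(\emptyset,\emptyset)=\Delta_{\check\rho}$, and by Theorem \ref{thm:R:properties} we have $R(\Delta_{\check\rho})=\Delta_{\hat\rho}$. Finally, $\tr_{\hat\rho}(\Delta_{\hat\rho})=(\partial^0_{\hat\rho}\Delta_{\hat\rho},\partial^1_{\hat\rho}\Delta_{\hat\rho})=(\emptyset,\emptyset)$ by \eqref{Delta:rho:equiv:empty:partial} in Lemma \ref{alternative}. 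Chaining these three equalities gives $\partial R(\emptyset,\emptyset)=(\emptyset,\emptyset)$.

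The argument for $\partial I$ is entirely parallel: $\partial I(\emptyset,\emptyset)=\tr_{\check\rho}(I(\tr_{\hat\rho}^{-1}(\emptyset,\emptyset)))$, and again $\tr_{\hat\rho}^{-1}(\emptyset,\emptyset)=\Delta_{\hat\rho}$ by Theorem \ref{thm:F:well:defined}, $I(\Delta_{\hat\rho})=\Delta_{\check\rho}$ by Theorem \ref{thm:I:properties}, and $\tr_{\check\rho}(\Delta_{\check\rho})=(\emptyset,\emptyset)$ by \eqref{Delta:rho:equiv:empty:partial}. This yields $\partial I(\emptyset,\emptyset)=(\emptyset,\emptyset)$.

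There is essentially no obstacle here — the statement is a bookkeeping exercise that confirms the lifted operators behave correctly on the degenerate input $(\emptyset,\emptyset)\in\bd_\rho\setminus\bd_\rho^-$. The only subtlety worth a sentence is that $\tr_\rho^{-1}$ is defined on all of $\bd_\rho$ (not just $\bd_\rho^-$) precisely because of the special case $\tr_\rho^{-1}(\emptyset,\emptyset)=\Delta_\rho$ in Theorem \ref{thm:F:well:defined}, so the compositions $\partial R$ and $\partial I$ are genuinely defined at $(\emptyset,\emptyset)$; once that is acknowledged, the proof is the three-step chain above in each case.
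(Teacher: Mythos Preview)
Your proposal is correct and matches the paper's proof essentially step for step: both invoke Theorem~\ref{thm:F:well:defined} for $\tr_\rho^{-1}(\emptyset,\emptyset)=\Delta_\rho$, Theorems~\ref{thm:R:properties} and~\ref{thm:I:properties} for $R(\Delta_{\check\rho})=\Delta_{\hat\rho}$ and $I(\Delta_{\hat\rho})=\Delta_{\check\rho}$, and \eqref{Delta:rho:equiv:empty:partial} for $\tr_\rho(\Delta_\rho)=(\emptyset,\emptyset)$.
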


\begin{proof}
By Theorem \ref{thm:F:well:defined}, we have 
$\tr_{\check\rho}^{-1}(\emptyset,\emptyset)=\Delta_{\check\rho}$ and
$\tr_{\hat\rho}^{-1}(\emptyset,\emptyset)=\Delta_{\hat\rho}$.
By Theorems \ref{thm:R:properties} and \ref{thm:I:properties}, % Definition \ref{def:maps}, 
we have $R(\Delta_{\check\rho})=\Delta_{\hat\rho}$ and 
$I(\Delta_{\hat\rho})=\Delta_{\check{\rho}}$.
By \eqref{Delta:rho:equiv:empty:partial}, we have 
$\tr_{\hat\rho}(\Delta_{\hat\rho})=(\emptyset,\emptyset)$
and $\tr_{\check{\rho}}(\Delta_{\check{\rho}})=(\emptyset,\emptyset)$,
so by \eqref{def:dR} and \eqref{def:dI}, the desired statements hold.
\end{proof}

\begin{algorithm}
\KwIn{$(\check{D}_0,\check{D}_1)\in\bd_{\check\rho}$}
\KwOut{$(\hat{D}_0,\hat{D}_1)\in\bd_{\hat\rho}$}
$\hat{H}_0\gets\emptyset$\\
$\hat{H}_1\gets\emptyset$\\
\For{$\hat{x}\in B_{3\hat\rho/2}(\check{D}_0)\cap\Delta_{\hat\rho}$}{
  \uIf{$\dist(\hat{x},\check{D}_0)\le\hat\rho/2$}{$\hat{H}_0\gets \hat{H}_0\cup\{\hat{x}\}$}
  \ElseIf{$\dist(\hat{x},\check{D}_1)\le\dist(\hat{x},\check{D}_0)$}{$\hat{H}_1\gets \hat{H}_1\cup\{\hat{x}\}$}
}
$\hat{D}_1\gets\{\hat{x}\in \hat{H}_1:\ \dist(\hat{x},\hat{H}_0)=\hat\rho\}$\\
$\hat{D}_0\gets\{\hat{x}\in \hat{H}_0:\ \dist(\hat{x},\hat{H}_1)=\hat\rho\}$
\caption{Evaluates $\partial R$ from \eqref{def:dR}.}
\label{Alg:coarsen}
\end{algorithm}

We prove that Algorithm \ref{Alg:coarsen} evaluates $\partial R$
correctly. 

\begin{theorem}\label{Thm:CoarsenWorks}
Let $(\check{D}_0,\check{D}_1)\in\bd_{\check\rho}$ and $(\hat{D}_0,\hat{D}_1)=\partial R(\check{D}_0,\check{D}_1)$.
Then the sets $\hat{H}_0\subset\Delta_{{\hat\rho}}$ and $\hat{H}_1\subset\Delta_{{\hat\rho}}$
generated by Algorithm \ref{Alg:coarsen} satisfy
\begin{align}    
&\hat{D}_0=\{\hat{x}\in \hat{H}_0:\ \dist(\hat{x},\hat{H}_1)={\hat\rho}\},
\label{Coarse:eq:AlgOutput0}\\
&\hat{D}_1=\{\hat{x}\in \hat{H}_1:\ \dist(\hat{x},\hat{H}_0)={\hat\rho}\}.
\label{Coarse:eq:AlgOutput1}
\end{align}
\end{theorem}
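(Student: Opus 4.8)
The plan is to reduce the statement to an application of Lemma \ref{prop:bdrycleaning}, the boundary-cleaning lemma, applied to the coarse-grid set $\hat M:=R(\tr_{\check\rho}^{-1}(\check D_0,\check D_1))\in S_{\hat\rho}$. By definition \eqref{def:dR}, we have $(\hat D_0,\hat D_1)=\tr_{\hat\rho}(\hat M)=(\partial^0_{\hat\rho}\hat M,\partial^1_{\hat\rho}\hat M)$, so once I know that the sets $\hat H_0,\hat H_1$ produced by the algorithm satisfy the nesting hypothesis
\[
\partial^0_{\hat\rho}\hat M\subset \hat H_0\subset \hat M,
\qquad
\partial^1_{\hat\rho}\hat M\subset \hat H_1\subset \hat M^c\cap\Delta_{\hat\rho},
\]
Lemma \ref{prop:bdrycleaning} delivers \eqref{Coarse:eq:AlgOutput0} and \eqref{Coarse:eq:AlgOutput1} immediately. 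The trivial case $(\check D_0,\check D_1)=(\emptyset,\emptyset)$ is handled separately using Lemma \ref{lem:all:trivial}: then the for-loop never executes, $\hat H_0=\hat H_1=\emptyset$, and both sides of \eqref{Coarse:eq:AlgOutput0}, \eqref{Coarse:eq:AlgOutput1} are empty, matching $\partial R(\emptyset,\emptyset)=(\emptyset,\emptyset)$. So assume $(\check D_0,\check D_1)\in\bd_{\check\rho}^-$ and write $\check M:=\tr_{\check\rho}^{-1}(\check D_0,\check D_1)=M(\check D_0,\check D_1)\in S_{\check\rho}^-$, so $\check D_i=\partial^i_{\check\rho}\check M$ and $\hat M=R(\check M)$.

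The four inclusions are the heart of the argument. For $\hat H_0\subset\hat M$: a point $\hat x$ enters $\hat H_0$ only when $\dist(\hat x,\check D_0)\le\hat\rho/2$; since $\check D_0=\partial^0_{\check\rho}\check M\subset\check M$, this gives $\hat x\in B_{\hat\rho/2}(\check M)\cap\Delta_{\hat\rho}=R(\check M)=\hat M$ by \eqref{def:R}. For $\partial^0_{\hat\rho}\hat M\subset\hat H_0$: given $\hat x\in\partial^0_{\hat\rho}\hat M$, property \eqref{new:boundary:in:old:boundary} yields $\hat x\in R(\partial^0_{\check\rho}\check M)=R(\check D_0)$, so by \eqref{def:R} there is $\check x\in\check D_0$ with $\|\hat x-\check x\|_\infty\le\hat\rho/2$, i.e. $\dist(\hat x,\check D_0)\le\hat\rho/2$. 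I still need $\hat x$ to actually be visited by the loop, i.e. $\hat x\in B_{3\hat\rho/2}(\check D_0)$ — this follows from the same inequality — and I must check the \textbf{uIf} branch fires, which it does since the condition $\dist(\hat x,\check D_0)\le\hat\rho/2$ holds; hence $\hat x\in\hat H_0$. For $\hat H_1\subset\hat M^c\cap\Delta_{\hat\rho}$: a point $\hat x$ enters $\hat H_1$ only when $\dist(\hat x,\check D_0)>\hat\rho/2$, and I claim this forces $\hat x\notin\hat M=B_{\hat\rho/2}(\check M)\cap\Delta_{\hat\rho}$. This is the step that uses the structure of $\bd_{\check\rho}$ most seriously: I must show $\dist(\hat x,\check M)>\hat\rho/2$, i.e. that no point of $\check M$ lies within $\hat\rho/2$ of $\hat x$. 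The argument is that if $\dist(\hat x,\check M)\le\hat\rho/2<\dist(\hat x,\check D_0)$, then a nearest point $\check x\in\check M$ with $\|\hat x-\check x\|_\infty=\dist(\hat x,\check M)$ is an interior point, not on $\partial^0_{\check\rho}\check M$; invoking Theorem \ref{prop:xdists} (specifically \eqref{coarse:eq:xinMdist}) and the path $\phi$ from Lemma \ref{lem:grid:line} run from $\check x$ toward $\hat x$ — noting $\hat x\in\Delta_{\check\rho}$ since $\Delta_{\hat\rho}\subset\Delta_{\check\rho}$ — one of the intermediate grid points of $\phi$ lands in $\partial^0_{\check\rho}\check M$ at distance $\le\hat\rho/2$ from $\hat x$ via \eqref{path:stays:in:square}, contradicting $\dist(\hat x,\check D_0)>\hat\rho/2$. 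For $\partial^1_{\hat\rho}\hat M\subset\hat H_1$: given $\hat x\in\partial^1_{\hat\rho}\hat M$, by \eqref{dk:is:dist:k} we have $\dist(\hat x,\hat M)=\hat\rho$; combined with \eqref{voronoi:overapproximation}/\eqref{def:V} one shows $\dist(\hat x,\check M)\le\hat\rho/2+\hat\rho$ is too crude, so instead I argue directly: pick $\hat y\in\partial^0_{\hat\rho}\hat M$ with $\|\hat x-\hat y\|_\infty=\hat\rho$ (Lemma \ref{neighbour:in:boundary}); then $\hat y\in R(\check D_0)$ by \eqref{new:boundary:in:old:boundary}, so some $\check x\in\check D_0$ satisfies $\|\hat y-\check x\|_\infty\le\hat\rho/2$, hence $\dist(\hat x,\check D_0)\le\|\hat x-\hat y\|_\infty+\hat\rho/2=3\hat\rho/2$, so $\hat x$ is visited by the loop. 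The \textbf{uIf} branch does not fire because $\hat x\in\hat M^c$ gives $\dist(\hat x,\check M)>\hat\rho/2$ as above, hence $\dist(\hat x,\check D_0)>\hat\rho/2$; and the \textbf{ElseIf} branch fires because $\dist(\hat x,\check D_1)\le\dist(\hat x,\check D_0)$ — this last inequality is the delicate one and follows from \eqref{opposite:coarse:eq:xdists} applied at $\hat x\in\check M^c\cap\Delta_{\check\rho}$ together with \eqref{coarse:eq:xnotinMdist}, giving $\dist(\hat x,\check D_1)=\dist(\hat x,\partial^1_{\check\rho}\check M)<\dist(\hat x,\partial^0_{\check\rho}\check M)=\dist(\hat x,\check D_0)$.

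The main obstacle I anticipate is the ``$\hat H_1$ branch fires'' verification just sketched, i.e. showing that every $\hat x\in\partial^1_{\hat\rho}\hat M$ both (a) lies in the search region $B_{3\hat\rho/2}(\check D_0)$ and (b) satisfies $\dist(\hat x,\check D_1)\le\dist(\hat x,\check D_0)$ so that it is not silently dropped by the if/else-if cascade. Part (a) is a routine triangle-inequality bookkeeping once \eqref{new:boundary:in:old:boundary} is in hand, but part (b) requires reading off the correct distance comparison from Theorem \ref{prop:xdists}, and one must be careful that $\hat x\in\Delta_{\check\rho}$ so that those coarse-grid statements apply verbatim — which is exactly where the hypothesis $\Delta_{\hat\rho}\subsetneq\Delta_{\check\rho}$ from \eqref{fixed:grids} enters. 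Everything else is a direct substitution of \eqref{def:R}, the definitions of $\partial^0,\partial^1$, and the already-proven properties \eqref{new:boundary:in:old:boundary}, \eqref{voronoi:overapproximation}, and \eqref{coarse:eq:xnotinMdist}--\eqref{opposite:coarse:eq:xdists}, followed by the single invocation of Lemma \ref{prop:bdrycleaning}.
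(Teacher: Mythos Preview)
Your overall strategy matches the paper's: establish the nesting \eqref{nested} for $\hat H_0,\hat H_1$ relative to $R(\check M)$ and then invoke Lemma~\ref{prop:bdrycleaning}. Three of the four inclusions are fine, but your argument for $\hat H_1\subset R(\check M)^c\cap\Delta_{\hat\rho}$ has a genuine gap. You assert that the condition $\dist(\hat x,\check D_0)>\hat\rho/2$ by itself forces $\hat x\notin R(\check M)$, via a path from the nearest $\check x\in\check M$ to $\hat x$ that supposedly meets $\partial^0_{\check\rho}\check M$ within $\hat\rho/2$ of $\hat x$. This fails whenever $\hat x\in\check M$: then $\check x=\hat x$, the path is trivial, and no boundary point is produced. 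Concretely, with $m=1$, $\check\rho=1$, $\hat\rho=4$, $\check M=\{-10,\ldots,10\}$, the point $\hat x=4$ lies in the loop range $B_{3\hat\rho/2}(\check D_0)$ and satisfies $\dist(\hat x,\check D_0)=6>\hat\rho/2$, yet $\hat x\in\check M\subset R(\check M)$. What actually keeps this point out of $\hat H_1$ in the algorithm is the \textbf{ElseIf} condition $\dist(\hat x,\check D_1)\le\dist(\hat x,\check D_0)$ (here $7\not\le 6$), which you never invoke in this direction.

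The fix, and what the paper does, is to use both branch conditions when characterising $\hat H_1$. The \textbf{ElseIf} test together with \eqref{opposite:coarse:eq:xdists} immediately yields $\hat x\in\check M^c\cap\Delta_{\check\rho}$; then \eqref{coarse:eq:xnotinMdist} converts $\dist(\hat x,\check D_0)>\hat\rho/2$ into $\dist(\hat x,\check M)>\hat\rho/2$, and \eqref{def:R} gives $\hat x\notin R(\check M)$. With this repair in place, the remainder of your proof goes through and coincides with the paper's argument.
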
 

\begin{proof}
If $(\check{D}_0,\check{D}_1)=(\emptyset,\emptyset)$, then the loop in Algorithm \ref{Alg:coarsen}
is void, and thus we have $(\hat{H}_0,\hat{H}_1)=(\emptyset,\emptyset)$.
%By Theorem \ref{thm:F:well:defined} and by \eqref{Delta:rho:equiv:empty:partial}, 
By Lemma \ref{lem:all:trivial},
we also have $(\hat{D}_0,\hat{D}_1)=(\emptyset,\emptyset)$, and %in view of 
statements \eqref{Coarse:eq:AlgOutput0} and \eqref{Coarse:eq:AlgOutput1} hold.

\medskip

From now on let $(\check{D}_0,\check{D}_1)\in\bd_{\check\rho}^-$.
Inspecting Algorithm \ref{Alg:coarsen} reveals that
\begin{align}
&\hat{H}_0=B_{{\hat\rho}/2}(\check{D}_0)\cap\Delta_{{\hat\rho}},
\label{H0:as:in:Alg:1}\\
&\hat{H}_1=\{\hat{x}\in B_{3{\hat\rho}/2}(\check{D}_0)\cap B_{{\hat\rho}/2}(\check{D}_0)^c\cap\Delta_{{\hat\rho}}:
\dist(\hat{x},\check{D}_1)\le\dist(\hat{x},\check{D}_0)\}.
\label{H1:as:in:Alg:1}
\end{align}
Let $\check{M}:=\tr_{\check\rho}^{-1}(\check{D}_0,\check{D}_1)$.
Then Theorem \ref{thm:F:well:defined} and \eqref{def:dR} %and \eqref{def:dI} 
yield
\begin{equation}\label{this}
\check{D}_0=\partial^0_{\check\rho} \check{M},\quad
\check{D}_1=\partial^1_{\check\rho} \check{M},\quad
\hat{D}_0=\partial^0_{{\hat\rho}}R(\check{M}),\quad
\hat{D}_1=\partial^1_{{\hat\rho}}R(\check{M}).
\end{equation}
We first argue that
\begin{equation}\label{Coarse:eq:H0SetIneqs}
\partial^0_{{\hat\rho}}R(\check{M})
\subset \hat{H}_0\subset R(\check{M}).
\end{equation}
Using \eqref{new:boundary:in:old:boundary}, \eqref{def:R}, \eqref{this} 
and \eqref{H0:as:in:Alg:1}, we see that
\[
\partial^0_{{\hat\rho}}R(\check{M})
\subset R(\partial^0_{\check\rho} \check{M})
=B_{{\hat\rho}/2}(\partial^0_{\check\rho} \check{M})\cap\Delta_{{\hat\rho}}
=B_{{\hat\rho}/2}(\check{D}_0)\cap\Delta_{{\hat\rho}}
=\hat{H}_0,
\]
so the first inclusion in \eqref{Coarse:eq:H0SetIneqs} holds.
Using \eqref{H0:as:in:Alg:1}, \eqref{this} and \eqref{def:R}, we obtain,
partly repeating the above computation, that
\[
\hat{H}_0=B_{{\hat\rho}/2}(\check{D}_0)\cap\Delta_{{\hat\rho}}
=B_{{\hat\rho}/2}(\partial^0_{\check\rho} \check{M})\cap\Delta_{{\hat\rho}}
\subset B_{{\hat\rho}/2}(\check{M})\cap\Delta_{{\hat\rho}}
=R(\check{M}),
\]
and hence the second inclusion in \eqref{Coarse:eq:H0SetIneqs} holds.

\medskip

In the following, we argue that
\begin{equation}\label{Coarse:eq:H1SetIneqs}
\partial^1_{{\hat\rho}}R(\check{M})
\subset \hat{H}_1\subset R(\check{M})^c\cap\Delta_{{\hat\rho}}.
\end{equation}
Since $\Delta_{{\hat\rho}}\subset\Delta_{\check\rho}$, statements \eqref{coarse:eq:xdists} 
and \eqref{opposite:coarse:eq:xdists} allow us in conjunction with \eqref{this} 
to represent \eqref{H1:as:in:Alg:1} in the form
\begin{equation}\label{better:representation:H1}
\hat{H}_1=B_{3{\hat\rho}/2}(\partial^0_{\check\rho} \check{M})\cap B_{{\hat\rho}/2}(\partial^0_{\check\rho} \check{M})^c
\cap \check{M}^c\cap\Delta_{{\hat\rho}}.
\end{equation}
Now let $\hat{z}\in\partial^1_{{\hat\rho}}R(\check{M})$.
It follows from the triangle inequality, from \eqref{def:partial:1},
from \eqref{new:boundary:in:old:boundary} and from \eqref{def:R} that
\begin{equation}\label{check:H1:1}\begin{aligned}
\dist(\hat{z},\partial^0_{\check\rho} \check{M})
&\le\dist(\partial^1_{{\hat\rho}}R(\check{M}),\partial^0_{{\hat\rho}}R(\check{M}))
+\dist(\partial^0_{{\hat\rho}}R(\check{M}),\partial^0_{\check\rho} \check{M})\\
&\le{\hat\rho}+\dist(R(\partial^0_{\check\rho} \check{M}),\partial^0_{\check\rho} \check{M})
\le 3{\hat\rho}/2.
\end{aligned}
\end{equation}
Also, since $\partial^0_{\check\rho} \check{M}\subset \check{M}$, since $\hat{z}\in R(\check{M})^c\cap\Delta_{{\hat\rho}}$
and by \eqref{def:R},
it follows %from \eqref{coarse:eq:xnotinMdist} 
that
\begin{equation}\label{check:H1:2}
%\dist(\hat{z},\check{D}_0)=
\dist(\hat{z},\partial_{\check\rho}^0\check{M})\ge\dist(\hat{z},\check{M})>{\hat\rho}/2.
\end{equation}
Finally, by triangle inequality, since $\hat{z}\in R(\check{M})^c\cap\Delta_{{\hat\rho}}$ 
and by \eqref{eq:GammaHausdorff}, we have
\[\dist(\hat{z},\check{M})
\ge\dist(\hat{z},R(\check{M}))-\dist(\check{M},R(\check{M}))
\ge{\hat\rho}-{\hat\rho}/2\ge{\check\rho},\]
and hence
\begin{equation}\label{check:H1:3}
\hat{z}\in \check{M}^c\cap\Delta_{{\hat\rho}}. 
\end{equation}
Comparing statements \eqref{check:H1:1}, \eqref{check:H1:2} and \eqref{check:H1:3}
with representation \eqref{better:representation:H1} shows that $\hat{z}\in \hat{H}_1$. 
Hence the first inclusion in \eqref{Coarse:eq:H1SetIneqs} holds.

\medskip

We check the second inclusion in \eqref{Coarse:eq:H1SetIneqs}.
Let $\hat{z}\in \hat{H}_1$.
By \eqref{better:representation:H1}, we have 
$\hat{z}\in \check{M}^c\cap\Delta_{{\hat\rho}}\subset \check{M}^c\cap\Delta_{\check\rho}$.
Using \eqref{coarse:eq:xnotinMdist} and \eqref{better:representation:H1}, we conclude that
\[\dist(\hat{z},\check{M})=\dist(\hat{z},\partial_{\check\rho}^0\check{M})>{\hat\rho}/2.\]
Because of \eqref{eq:GammaHausdorff}, it follows that $\hat{z}\notin R(\check{M})$, 
and since $\hat{z}\in\Delta_{{\hat\rho}}$, the second inclusion in statement 
\eqref{Coarse:eq:H1SetIneqs} holds as well.

\medskip

Now statements \eqref{Coarse:eq:AlgOutput0} and \eqref{Coarse:eq:AlgOutput1}
follow from \eqref{this}, from
Lemma \ref{prop:bdrycleaning} (with ${\hat\rho}$ and $R(\check{M})$
in lieu of $\rho$ and $M$) and from statements 
\eqref{Coarse:eq:H0SetIneqs} and \eqref{Coarse:eq:H1SetIneqs}.
\end{proof}

We prove that Algorithm \ref{Alg:refine} evaluates $\partial I$ correctly. 
An inspection of the algorithm shows that it computes the unions in equations
\eqref{fine:output:0} and \eqref{fine:output:1}.

\begin{algorithm}
\KwIn{$(\hat{D}_0,\hat{D}_1)\in\bd_{\hat{\rho}}$}
\KwOut{$(\check{D}_0,\check{D}_1)\in\bd_{\check{\rho}}$}
$\check{D}_0\gets\emptyset$\\
$\check{D}_1\gets\emptyset$\\
\For{$\hat{z}\in \hat{D}_1$}{
  \For{$\hat{x}\in \hat{D}_0\cap B_{\hat{\rho}}(\hat{z})$}{
    $\check{D}_0\gets \check{D}_0\cup(B_{{\hat{\rho}}/2}(\hat{x})\cap B_{({\hat{\rho}}+\check{\rho})/2}(\hat{z})\cap\Delta_{\check{\rho}})$\\
    %$\check{D}_1\gets \check{D}_1\cup(B_{{\hat{\rho}}/2+\check{\rho}}(\hat{x})\cap B_{({\hat{\rho}}-\check{\rho})/2}(\hat{z})\cap\Delta_{\check{\rho}})$  \\
    $\check{D}_1\gets \check{D}_1\cup\big(B_{{\hat{\rho}}/2}(\hat{z})\cap B_{{\hat{\rho}}/2+\check{\rho}}(\hat{x})\cap\big(\cap_{\hat{x}'\in \hat{D}_0\cap B_{\hat{\rho}}(\hat{z})}B_{{\hat{\rho}}/2}(\hat{x}')^c\big)\cap\Delta_{\check{\rho}}\big)$  
  }
}
\caption{Evaluates $\partial I$ from \eqref{def:dI}.}
\label{Alg:refine}
\end{algorithm}

\begin{theorem}\label{thm:refining:works}
Let $(\hat{D}_0,\hat{D}_1)\in\bd_{\hat\rho}$ and $(\check{D}_0,\check{D}_1)=\partial I(\hat{D}_0,\hat{D}_1)$.
Then 
\begin{align}    
&\check{D}_0
%=\bigcup_{\hat{z}\in \hat{D}_1}\bigcup_{\hat{x}\in \hat{D}_0\cap B_{\hat\rho}(\hat{z})}
=\bigcup_{\hat{x}\in \hat{D}_0}\bigcup_{\hat{z}\in \hat{D}_1\cap B_{\hat\rho}(\hat{x})}
\Big(B_{{\hat\rho}/2}(\hat{x})\cap B_{({\hat\rho}+\check{\rho})/2}(\hat{z})\cap\Delta_{\check{\rho}}\Big),
\label{fine:output:0}\\
&\check{D}_1=\bigcup_{\hat{z}\in \hat{D}_1}\bigcup_{\hat{x}\in \hat{D}_0\cap B_{\hat\rho}(\hat{z})}
\Big(B_{{\hat\rho}/2}(\hat{z})\cap B_{{\hat\rho}/2+\check{\rho}}(\hat{x})
\cap\big(\!\!\!\!\!\!\bigcap_{\hat{x}'\in \hat{D}_0\cap B_{\hat\rho}(\hat{z})}\!\!\!\!\!\!B_{{\hat\rho}/2}(\hat{x}')^c\big)
\cap\Delta_{\check{\rho}}\Big).
\label{fine:output:1}
\end{align}
%IN FINAL TYPESETTING CHECK WHETHER THE SPACING CAN BE STANDARDIZED!
\end{theorem}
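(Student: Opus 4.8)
The plan is to unfold the definition $\partial I=\tr_{\check\rho}\circ I\circ\tr_{\hat\rho}^{-1}$ and establish each of \eqref{fine:output:0} and \eqref{fine:output:1} by double inclusion. If $(\hat D_0,\hat D_1)=(\emptyset,\emptyset)$, then $\partial I(\emptyset,\emptyset)=(\emptyset,\emptyset)$ by Lemma \ref{lem:all:trivial} and both unions on the right are empty, so I may assume $(\hat D_0,\hat D_1)\in\bd_{\hat\rho}^-$ and put $\hat M:=\tr_{\hat\rho}^{-1}(\hat D_0,\hat D_1)$. By Theorem \ref{thm:F:well:defined}, $\hat M\in S_{\hat\rho}^-$, $\hat D_i=\partial^i_{\hat\rho}\hat M$, and $(\check D_0,\check D_1)=(\partial^0_{\check\rho}I(\hat M),\partial^1_{\check\rho}I(\hat M))$; also $I(\hat M)\in S_{\check\rho}^-$ (it contains $\hat M\neq\emptyset$ but not the points of $\hat M^c\cap\Delta_{\hat\rho}$), and $\dist(\hat z,\hat M)=\hat\rho$ for all $\hat z\in\hat D_1$ by \eqref{dk:is:dist:k}.

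For the inclusions ``$\supseteq$'' I take a point of the right-hand side and realise it as a boundary point of $I(\hat M)$, the neighbour in the complement being manufactured with the straight path $\phi$ of Lemma \ref{lem:grid:line}. For \eqref{fine:output:0}: given $\hat x\in\hat D_0\subset\hat M$, $\hat z\in\hat D_1$ with $\|\hat x-\hat z\|_\infty=\hat\rho$, and $\check x\in B_{\hat\rho/2}(\hat x)\cap B_{(\hat\rho+\check\rho)/2}(\hat z)\cap\Delta_{\check\rho}$, one has $\check x\in I(\hat M)$, and the point $\check z$ one $\check\rho$-step from $\check x$ towards $\hat z$ along $\phi$ satisfies $\|\check x-\check z\|_\infty=\check\rho$ and $\|\check z-\hat z\|_\infty\leq(\hat\rho-\check\rho)/2$, whence $\dist(\check z,\hat M)\geq\hat\rho-(\hat\rho-\check\rho)/2>\hat\rho/2$, so $\check z\in I(\hat M)^c\cap\Delta_{\check\rho}$ and $\check x\in\partial^0_{\check\rho}I(\hat M)$ by \eqref{def:partial:1}. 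For \eqref{fine:output:1} the same device gives, from a point $\check z$ of the right-hand side, a $\check\rho$-neighbour $\check x\in B_{\hat\rho/2}(\hat x)\cap\Delta_{\check\rho}\subset I(\hat M)$ with $\check x\in\partial^0_{\check\rho}I(\hat M)$; the extra ingredient is $\dist(\check z,\hat M)>\hat\rho/2$, and this is exactly what the factor $\bigcap_{\hat x'\in\hat D_0\cap B_{\hat\rho}(\hat z)}B_{\hat\rho/2}(\hat x')^c$ buys, since a nearest point $\hat y\in\hat M$ at distance $\hat\rho/2$ would lie in $\hat D_0\cap B_{\hat\rho}(\hat z)$ with $\check z\in B_{\hat\rho/2}(\hat y)$; then Lemma \ref{neighbour:in:boundary} yields $\check z\in\partial^1_{\check\rho}I(\hat M)$.

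For the inclusions ``$\subseteq$'' I start from a boundary point of $I(\hat M)$ and reconstruct a straddling coarse pair. The backbone is a congruence argument: if $\check x\in I(\hat M)$ has a $\check\rho$-neighbour $\check z\in I(\hat M)^c\cap\Delta_{\check\rho}$, then $\dist(\check x,\hat M)\leq\hat\rho/2$ and $\dist(\check x,\hat M)\geq\dist(\check z,\hat M)-\check\rho>\hat\rho/2-\check\rho$, and since these distances lie in $\check\rho\N_0$ this pins $\dist(\check x,\hat M)=\lfloor\hat\rho/(2\check\rho)\rfloor\check\rho=:d_0$ and then $\dist(\check z,\hat M)=d_0+\check\rho$, so any $\hat x\in\hat M$ nearest to $\check x$ is simultaneously nearest to $\check z$. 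For \eqref{fine:output:1}, rounding $\check z$ to $\hat z\in\Delta_{\hat\rho}$ gives $\|\check z-\hat z\|_\infty\leq\hat\rho/2<\dist(\check z,\hat M)$, hence $\hat z\in\hat M^c\cap\Delta_{\hat\rho}$, and $\|\hat x-\hat z\|_\infty\leq d_0+\check\rho+\hat\rho/2<2\hat\rho$ forces $\|\hat x-\hat z\|_\infty=\hat\rho$, so $(\hat x,\hat z)\in\hat D_0\times\hat D_1$ and all asserted memberships hold, the condition $\check z\notin B_{\hat\rho/2}(\hat x')$ over $\hat x'\in\hat D_0\cap B_{\hat\rho}(\hat z)\subset\hat M$ being automatic from $\dist(\check z,\hat M)>\hat\rho/2$. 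For \eqref{fine:output:0} rounding $\check z$ overshoots the required bound $\|\check x-\hat z\|_\infty\leq(\hat\rho+\check\rho)/2$, so instead I build $\hat z:=\hat x+\hat\rho t$ with $t_j:=\mathrm{sgn}\big((\check z-\hat x)_j\big)$ on the coordinates realising $\|\check z-\hat x\|_\infty$ and $t_j:=0$ elsewhere; then $\|\check x-\hat z\|_\infty\leq(\hat\rho+\check\rho)/2$ and $\|\hat z-\hat x\|_\infty=\hat\rho$ (some $t_j\neq0$ since $d_0\geq\check\rho$), while $\|\check z-\hat z\|_\infty<d_0+\check\rho=\dist(\check z,\hat M)$ gives $\hat z\in\hat M^c\cap\Delta_{\hat\rho}$, so again $(\hat x,\hat z)\in\hat D_0\times\hat D_1$ as needed.

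Assembling the four inclusions finishes the proof. The main obstacle is the inclusion ``$\subseteq$'' in \eqref{fine:output:0}: the obvious candidate for $\hat z$ obtained by rounding the witnessing $\check\rho$-neighbour $\check z$ into $\Delta_{\hat\rho}$ can lie at distance exactly $\hat\rho/2$ from $\check z$ when $\hat\rho/\check\rho$ is even, which overshoots the radius $(\hat\rho+\check\rho)/2$ around $\check x$ by $\check\rho/2$; this is what forces the coordinate-wise construction above, and with it the congruence argument that pins $\dist(\check x,\hat M)$ and $\dist(\check z,\hat M)$ exactly. Everything else reduces to triangle inequalities combined with Lemmas \ref{lem:grid:line} and \ref{path:from:M:to:Mc:hits:boundaries}.
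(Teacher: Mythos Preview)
Your argument is correct. The overall architecture is the same as the paper's---treat the trivial case, set $\hat M=\tr_{\hat\rho}^{-1}(\hat D_0,\hat D_1)$, and prove each of \eqref{fine:output:0} and \eqref{fine:output:1} by double inclusion---but the auxiliary constructions differ in interesting ways.

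For the inclusions ``$\supseteq$'' you use the straight path $\phi$ from Lemma~\ref{lem:grid:line} to manufacture the $\check\rho$-neighbour in both cases, whereas the paper builds the neighbour $\check z$ in \eqref{fine:output:0} by an explicit coordinate-wise formula and uses $\phi$ only for \eqref{fine:output:1}. Your use of $\phi$ is cleaner and more uniform. For the inclusions ``$\subseteq$'' the paper leans on two prepackaged results: property \eqref{I:new:boundary:in:old:boundary} to locate $\hat x\in\hat D_0$, and the technical Lemma~\ref{lem:v:cover} (with $k=1$) to produce $\hat z\in B_{(\hat\rho+\check\rho)/2}(\check x)\cap B_{\hat\rho/2}(\check z)\cap\Delta_{\hat\rho}$. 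You replace both by your congruence observation that $\dist(\check x,\hat M)$ and $\dist(\check z,\hat M)$ are pinned to $d_0=\lfloor\hat\rho/(2\check\rho)\rfloor\check\rho$ and $d_0+\check\rho$ exactly; this lets you take $\hat x$ as a nearest coarse point and then either round $\check z$ (for \eqref{fine:output:1}) or build $\hat z=\hat x+\hat\rho t$ by hand (for \eqref{fine:output:0}). Your route is more self-contained and avoids the somewhat fiddly Lemma~\ref{lem:v:cover}, while the paper's route is more modular since Lemma~\ref{lem:v:cover} is reused elsewhere. The coordinate-wise construction of $\hat z$ that you introduce for ``$\subseteq$'' in \eqref{fine:output:0} is the only genuinely new device, and your verification that $\|\check z-\hat z\|_\infty<d_0+\check\rho$ (hence $\hat z\in\hat M^c$) together with $\|\check x-\hat z\|_\infty\le(\hat\rho+\check\rho)/2$ is sound.
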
 

\begin{proof}
If $(\hat{D}_0,\hat{D}_1)=(\emptyset,\emptyset)$, 
then Lemma \ref{lem:all:trivial} yields $(\check{D}_0,\check{D}_1)=(\emptyset,\emptyset)$,
and the unions in equations \eqref{fine:output:0} and \eqref{fine:output:1} are empty.
Hence \eqref{fine:output:0} and \eqref{fine:output:1} hold.

\medskip

From now on let $(\hat{D}_0,\hat{D}_1)\in\bd_{\hat\rho}^-$.
Let $\hat{M}:=\tr_{\hat\rho}^{-1}(\hat{D}_0,\hat{D}_1)$.
Then Theorem \ref{thm:F:well:defined} and \eqref{def:I} yield
\begin{equation}\label{that}
\hat{D}_0=\partial^0_{\hat\rho} \hat{M},\quad
\hat{D}_1=\partial^1_{\hat\rho} \hat{M},\quad
\check{D}_0=\partial^0_{\check{\rho}}I(\hat{M}),\quad
\check{D}_1=\partial^1_{\check{\rho}}I(\hat{M}).
\end{equation}

\medskip

We check \eqref{fine:output:0}.
Let $\check{x}\in \check{D}_0$.
By \eqref{that}, we have $\check{x}\in\partial^0_{\check{\rho}}I(\hat{M})$.
By \eqref{I:new:boundary:in:old:boundary}, \eqref{def:I} and \eqref{that},
there exists $\hat{x}\in\partial^0_{\hat\rho} \hat{M}=\hat{D}_0$ with 
\begin{equation}\label{loc:3}
\check{x}\in B_{{\hat\rho}/2}(\hat{x})\cap\Delta_{\check{\rho}}.
\end{equation}
By \eqref{def:partial:1}, there exists $\check{z}\in I(\hat{M})^c\cap\Delta_{\check{\rho}}$
with 
\begin{equation}\label{loc:8}
\|\check{x}-\check{z}\|_\infty=\check{\rho}.
\end{equation}
In view of \eqref{def:I}, we have 
\begin{equation}\label{loc:1}
\dist(\check{z},\hat{M})>{\hat\rho}/2.
\end{equation}
By \eqref{loc:8} and by Lemma \ref{lem:v:cover} with $k=1$ and with $\check{z}$ in lieu of $v$, 
there exists a point 
\begin{equation}\label{loc:2}
\hat{z}\in B_{(\hat\rho+\check{\rho})/2}(\check{x})\cap B_{\hat\rho/2}(\check{z})\cap\Delta_{\hat\rho}.
%\|\check{x}-\hat{z}\|_\infty\le(\hat\rho+\check{\rho})/2
%\quad\text{and}\quad
%\|\check{z}-\hat{z}\|_\infty\le\hat\rho/2.
\end{equation}
Combining \eqref{loc:1} and \eqref{loc:2} yields $\hat{z}\in \hat{M}^c\cap\Delta_{\hat\rho}$, 
and combining \eqref{loc:3} with \eqref{loc:2} yields $\|\hat{x}-\hat{z}\|_\infty\le{\hat\rho}+\check{\rho}/2$.
Since $\|\hat{x}-\hat{z}\|_\infty\in{\hat\rho}\N_0$, we conclude that $\|\hat{x}-\hat{z}\|_\infty\le{\hat\rho}$.
In particular, we have 
%$\hat{z}\in\partial^1_{\hat\rho} \hat{M}\cap B_{\hat\rho}(\hat{x})$.
%In view of \eqref{that}, we have %$\hat{x}\in \hat{D}_0$ and 
%$\hat{z}\in \hat{D}_1\cap B_{\hat\rho}(\hat{x})$.
\begin{equation}\label{local:15}
\hat{z}\in\partial^1_{\hat\rho}\hat{M}\cap B_{\hat\rho}(\hat{x})
=\hat{D}_1\cap B_{\hat\rho}(\hat{x}).
\end{equation}
Using \eqref{loc:3}, \eqref{loc:2} and \eqref{local:15}, we conclude that
\[\check{x}\in\cup_{\hat{x}\in \hat{D}_0}\cup_{\hat{z}\in \hat{D}_1\cap B_{\hat\rho}(\hat{x})}
(B_{{\hat\rho}/2}(\hat{x})\cap B_{({\hat\rho}+\check{\rho})/2}(\hat{z})\cap\Delta_{\check{\rho}}).\]

\medskip

Conversely, let $\hat{x}\in \hat{D}_0$, let $\hat{z}\in \hat{D}_1\cap B_{\hat\rho}(\hat{x})$, and let
\begin{equation}\label{local:9}
\check{x}\in B_{{\hat\rho}/2}(\hat{x})\cap B_{({\hat\rho}+\check{\rho})/2}(\hat{z})\cap\Delta_{\check{\rho}}.
\end{equation}
By \eqref{that}, we have $\hat{x}\in\partial^0_{\hat\rho} \hat{M}$ and $\hat{z}\in\partial^1_{\hat\rho} \hat{M}\cap B_{\hat\rho}(\hat{x})$,
and by \eqref{def:I} and \eqref{local:9}, we have 
\begin{equation}\label{local:13}
\check{x}\in I(\hat{M}).
\end{equation}
Again in view of \eqref{local:9}, the point $\check{z}\in\Delta_{\check{\rho}}$ given by
\begin{equation*}%\label{local:10}
\check{z}_j:=\begin{cases}
\check{x}_j+\check{\rho},&\check{x}_j\in[\hat{z}_j-{\hat\rho}/2-\check{\rho}/2,\hat{z}_j-{\hat\rho}/2],\\
\check{x}_j,&\check{x}_j\in(\hat{z}_j-{\hat\rho}/2,\hat{z}_j+{\hat\rho}/2),\\
\check{x}_j-\check{\rho},&\check{x}_j\in[\hat{z}_j+{\hat\rho}/2,\hat{z}_j+{\hat\rho}/2+\check{\rho}/2]
\end{cases}
\end{equation*}
for $j\in[1,m]$ is well-defined, and we have 
\begin{equation}\label{local:11}
\|\check{x}-\check{z}\|_\infty\le\check{\rho}
\end{equation}
%By \eqref{local:10}, we have 
as well as $\check{z}_j\in(\hat{z}_j-{\hat\rho}/2,\hat{z}_j+{\hat\rho}/2)$ for all $j\in[1,m]$, %which implies
and thus $\|\hat{z}-\check{z}\|_\infty<{\hat\rho}/2$.
Since $\hat{z}\in\partial^1_{\hat\rho} \hat{M}$, %\cap B_{\hat\rho}(\hat{x})$, 
we see, using \eqref{dk:is:dist:k}, that
\[{\hat\rho}=\dist(\hat{z},\hat{M})\le\|\hat{z}-\check{z}\|_\infty+\dist(\check{z},\hat{M})<{\hat\rho}/2+\dist(\check{z},\hat{M}),\]
which shows that $\dist(\check{z},\hat{M})>{\hat\rho}/2$, and hence, in view of \eqref{def:I} that
\begin{equation}\label{local:12}
\check{z}\in I(\hat{M})^c\cap\Delta_{\check{\rho}}.
\end{equation}
In view of \eqref{def:partial:1},
combining \eqref{local:13} with \eqref{local:11} and \eqref{local:12} yields 
$\check{x}\in\partial^0_{\hat\rho} I(\hat{M})$, and by \eqref{that},
we have $\check{x}\in \check{D}_0$.
All in all, we have verified statement \eqref{fine:output:0}.

\medskip

We check statement \eqref{fine:output:1}.
Let $\check{z}\in \check{D}_1$.
By \eqref{that}, we have $\check{z}\in\partial_{\check{\rho}}^1I(\hat{M})$.
Hence we have $\check{z}\in\Delta_{\check{\rho}}$,  
we see with \eqref{def:partial:3} and \eqref{def:I} that %$\dist(\check{z},\hat{M})>{\hat\rho}/2$, 
\eqref{loc:1} holds,
and by Lemma \ref{neighbour:in:boundary}, there exists 
$\check{x}\in\partial_{\check{\rho}}^0 I(\hat{M})$ with %$\|\check{z}-\check{x}\|_\infty=\check{\rho}$.
\eqref{loc:8}.
By \eqref{I:new:boundary:in:old:boundary} and \eqref{that}, there exists 
\begin{equation}\label{local:16}
\hat{x}\in\partial^0_{\hat\rho} \hat{M}=\hat{D}_0
\end{equation} 
with $\check{x}\in I(\hat{x})$.
By \eqref{def:I}, we have \eqref{loc:3}, and thus, with \eqref{loc:8}, that
\[\|\check{z}-\hat{x}\|_\infty\le\|\check{z}-\check{x}\|_\infty+\|\check{x}-\hat{x}\|_\infty\le{\hat\rho}/2+\check{\rho}.\]
We combine this with \eqref{loc:1} and summarize
\begin{equation}\label{loc:7}
%\check{z}\in B_{{\hat\rho}/2+\check{\rho}}(\hat{x})\cap\big(\cap_{\hat{x}'\in \hat{D}_0\cap %B_{\hat\rho}(\hat{z})}B_{{\hat\rho}/2}(\hat{x}')^c\big)\cap\Delta_{\check{\rho}}.
\check{z}\in B_{{\hat\rho}/2+\check{\rho}}(\hat{x})\cap\big(\cap_{\hat{x}'\in \hat{D}_0}B_{{\hat\rho}/2}(\hat{x}')^c\big)\cap\Delta_{\check{\rho}}.
\end{equation}
%Consider again the point $\hat{z}\in\Delta_{\hat\rho}$ given by \eqref{def:z:proof:D0}.
Again by \eqref{loc:8} and by Lemma \ref{lem:v:cover} with $k=1$, 
there exists $\hat{z}\in\Delta_{\hat\rho}$ with \eqref{loc:2}, and 
\eqref{local:15}
%$\hat{z}\in\partial^1_{\hat\rho} \hat{M}\cap B_{\hat\rho}(\hat{x})=\hat{D}_1\cap B_{\hat\rho}(\hat{x})$ 
holds for the same reasons as before.
From \eqref{loc:2}, \eqref{local:15}, \eqref{local:16} and \eqref{loc:7}, we obtain that
\[\check{z}\in\cup_{\hat{x}\in\hat{D}_0}\cup_{\hat{z}\in\hat{D}_1\cap B_{\hat\rho}(\hat{x})}
\big(B_{{\hat\rho}/2}(\hat{z})\cap B_{{\hat\rho}/2+\check{\rho}}(\hat{x})\cap
\big(\cap_{\hat{x}'\in \hat{D}_0}B_{{\hat\rho}/2}(\hat{x}')^c\big)\cap\Delta_{\check{\rho}}\big).\]
Since 
\[(\cap_{\hat{x}'\in \hat{D}_0}B_{{\hat\rho}/2}(\hat{x}')^c)
\subset(\cap_{\hat{x}'\in \hat{D}_0\cap B_{\hat\rho}(\hat{z})}B_{{\hat\rho}/2}(\hat{x}')^c)\quad
\forall\,\hat{z}\in\Delta_{\hat\rho},\]
it follows that
\[\check{z}\in\cup_{\hat{z}\in \hat{D}_1}\cup_{\hat{x}\in \hat{D}_0\cap B_{\hat\rho}(\hat{z})}
\big(B_{{\hat\rho}/2}(\hat{z})\cap B_{{\hat\rho}/2+\check{\rho}}(\hat{x})\cap
\big(\cap_{\hat{x}'\in \hat{D}_0\cap B_{\hat\rho}(\hat{z})}B_{{\hat\rho}/2}(\hat{x}')^c\big)
\cap\Delta_{\check{\rho}}\big).\]

\medskip

Conversely, let $\hat{z}\in \hat{D}_1$, let $\hat{x}\in \hat{D}_0\cap B_{\hat\rho}(\hat{z})$, and let 
\begin{equation}\label{loc:4}
\check{z}\in B_{{\hat\rho}/2}(\hat{z})\cap B_{{\hat\rho}/2+\check{\rho}}(\hat{x})
\cap\big(\cap_{\hat{x}'\in \hat{D}_0\cap B_{\hat\rho}(\hat{z})}B_{{\hat\rho}/2}(\hat{x}')^c\big)
\cap \Delta_{\check{\rho}}.
\end{equation}
By \eqref{that}, we have $\hat{x}\in\partial^0_{\hat\rho} \hat{M}$ and $\hat{z}\in\partial^1_{\hat\rho} \hat{M}$. %\cap B_{\hat\rho}(\hat{x})$.
We claim that
\begin{equation}\label{loc:5}
\check{z}\in I(\hat{M})^c\cap\Delta_{\check{\rho}}.
\end{equation}
Assume that statement \eqref{loc:5} is false.
Then $\check{z}\in I(\hat{M})$, and  by \eqref{def:I}, there exists $\hat{x}'\in \hat{M}$ 
with $\check{z}\in B_{{\hat\rho}/2}(\hat{x}')$.
If $\|\hat{x}'-\hat{z}\|_\infty>{\hat\rho}$, then \eqref{loc:4} yields
\[{\hat\rho}<\|\hat{x}'-\hat{z}\|_\infty
\le\|\hat{x}'-\check{z}\|_\infty+\|\check{z}-\hat{z}\|_\infty
\le\|\hat{x}'-\check{z}\|_\infty+{\hat\rho}/2,\]
%which implies 
and hence the contradiction $\|\hat{x}'-\check{z}\|_\infty>{\hat\rho}/2$.
If $\|\hat{x}'-\hat{z}\|_\infty\le{\hat\rho}$, then Lemma \ref{neighbour:in:boundary}, 
with $\hat{x}'\in \hat{M}$ and $\hat{z}\in\partial^1_{\hat\rho} \hat{M}$,
yields $\hat{x}'\in\partial^0_{\hat\rho} \hat{M}\cap B_{\hat\rho}(\hat{z})$.
By \eqref{that}, it follows that $\hat{x}'\in \hat{D}_0\cap B_{\hat\rho}(\hat{z})$, which, with $\check{z}\in B_{{\hat\rho}/2}(\hat{x}')$,
contradicts \eqref{loc:4}.
All in all, statement \eqref{loc:5} holds.

Let $\phi(\,\cdot\,)=\phi(\,\cdot\,;\hat{x},\check{z};\check{\rho})$ be the path from Lemma \ref{lem:grid:line}.
By \eqref{loc:4} and since $\hat{x}\in \hat{D}_0\cap B_{\hat\rho}(\hat{z})$,
we have $\|\check{z}-\hat{x}\|_\infty>{\hat\rho}/2\ge\check{\rho}$, and hence that $L(\phi)>1$.
Consider %the point 
$\check{x}:=\phi(L(\phi)-1)$.
By \eqref{endpoints:1} and \eqref{step:size:rho}, we have
\begin{equation}\label{loc:6}
\|\check{z}-\check{x}\|_\infty=\check{\rho},
\end{equation}
and by \eqref{distances:to:endpoints:0}, by \eqref{endpoints:1} and 
\eqref{distances:to:endpoints:0}, and by recalling $\check{z}\in B_{{\hat\rho}/2+\check{\rho}}(\hat{x})$ 
from \eqref{loc:4}, we see that
\begin{equation}\label{loc:9}
\|\check{x}-\hat{x}\|_\infty=\|\check{z}-\hat{x}\|_\infty-\check{\rho}\le{\hat\rho}/2.
\end{equation}
Now $\hat{x}\in \hat{M}$ and \eqref{loc:9} yield $\check{x}\in I(\hat{M})$, 
and with \eqref{loc:5}, \eqref{loc:6} and \eqref{that}, it follows that
$\check{z}\in\partial^1_{\check{\rho}} I(\hat{M})=\check{D}_1$.
All in all, we have verified \eqref{fine:output:1}.
\end{proof}

\begin{sidewaysfigure*}[p]
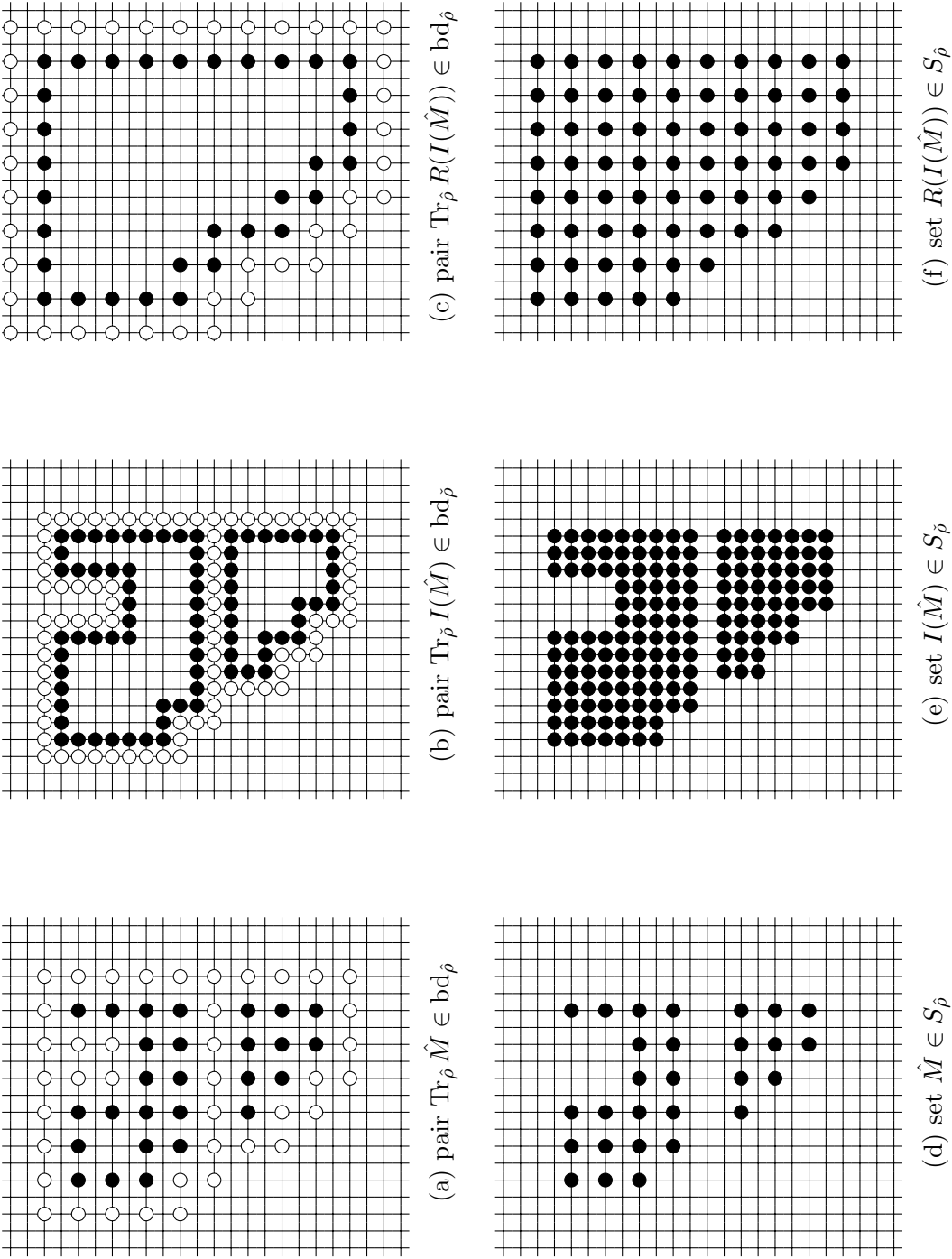

\centering
\begin{subfigure}[t]{0.3\textwidth}
\centering
\showgoboard{
--------------------,
--------------------,
--1-1-1-1-1-1-1-1---,
--------------------,
--1-0-0-0-1-1-0-1---,
--------------------,
--1-0---0-1-1-0-1---,
--------------------,
--1-0-0-0-0-0-0-1---,
--------------------,
--1-1-0-0-0-0-0-1---,
--------------------,
----1-1-1-1-1-1-1---,
--------------------,
------1-0-0-0-0-1---,
--------------------,
------1-1-0-0-0-1---,
--------------------,
--------1-1-0-0-1---,
--------------------,
----------1-1-1-1---,
--------------------,
--------------------,
--------------------}
\caption{pair $\tr_{\hat\rho}\hat M\in\bd_{\hat\rho}$}
\end{subfigure}
\hfill
\begin{subfigure}[t]{0.3\textwidth}
\centering
\showgoboard{
--------------------,
--------------------,
--111111111-11111---,
--100000001-10001---,
--10-----01-10-01---,
--10-----01-10-01---,
--10-----01110-01---,
--10-----00000-01---,
--10-----------01---,
--1000---------01---,
--1110---------01---,
----1000000000001---,
----1111111111111---,
------10000000001---,
------10-------01---,
------1000-----01---,
------1110-----01---,
--------1000---01---,
--------1110---01---,
----------1000001---,
----------1111111---,
--------------------,
--------------------,
--------------------}
\caption{pair $\tr_{\check\rho}I(\hat M)\in\bd_{\check\rho}$}
\end{subfigure}
\hfill
\begin{subfigure}[t]{0.3\textwidth}
\centering
\showgoboard{
1-1-1-1-1-1-1-1-1-1-,
--------------------,
1-0-0-0-0-0-0-0-0-1-,
--------------------,
1-0-------------0-1-,
--------------------,
1-0-------------0-1-,
--------------------,
1-0-------------0-1-,
--------------------,
1-0-0-----------0-1-,
--------------------,
1-1-0-0---------0-1-,
--------------------,
--1-1-0---------0-1-,
--------------------,
----1-0-0-------0-1-,
--------------------,
----1-1-0-0-----0-1-,
--------------------,
------1-1-0-0-0-0-1-,
--------------------,
--------1-1-1-1-1-1-,
--------------------}
\caption{pair $\tr_{\hat\rho}R(I(\hat M))\in\bd_{\hat\rho}$}
\end{subfigure}
\\[0.5cm]
\begin{subfigure}[t]{0.3\textwidth}
\centering
\showgoboard{
--------------------,
--------------------,
--------------------,
--------------------,
----0-0-0-----0-----,
--------------------,
----0-0-0-----0-----,
--------------------,
----0-0-0-0-0-0-----,
--------------------,
------0-0-0-0-0-----,
--------------------,
--------------------,
--------------------,
--------0-0-0-0-----,
--------------------,
----------0-0-0-----,
--------------------,
------------0-0-----,
--------------------,
--------------------,
--------------------,
--------------------,
--------------------}
\caption{set $\hat M\in S_{\hat\rho}$}
\end{subfigure}
\hfill
\begin{subfigure}[t]{0.3\textwidth}
\centering
\showgoboard{
--------------------,
--------------------,
--------------------,
---0000000---000----,
---0000000---000----,
---0000000---000----,
---0000000---000----,
---0000000000000----,
---0000000000000----,
---0000000000000----,
-----00000000000----,
-----00000000000----,
--------------------,
-------000000000----,
-------000000000----,
-------000000000----,
---------0000000----,
---------0000000----,
-----------00000----,
-----------00000----,
--------------------,
--------------------,
--------------------,
--------------------}
\caption{set $I(\hat M)\in S_{\check\rho}$}
\end{subfigure}
\hfill
\begin{subfigure}[t]{0.3\textwidth}
\centering
\showgoboard{
--------------------,
--------------------,
--0-0-0-0-0-0-0-0---,
--------------------,
--0-0-0-0-0-0-0-0---,
--------------------,
--0-0-0-0-0-0-0-0---,
--------------------,
--0-0-0-0-0-0-0-0---,
--------------------,
--0-0-0-0-0-0-0-0---,
--------------------,
----0-0-0-0-0-0-0---,
--------------------,
------0-0-0-0-0-0---,
--------------------,
------0-0-0-0-0-0---,
--------------------,
--------0-0-0-0-0---,
--------------------,
----------0-0-0-0---,
--------------------,
--------------------,
--------------------}
\caption{set $R(I(\hat M))\in S_{\hat\rho}$}
\end{subfigure}
\caption{Small computational example with
layout as in commutative diagram in Figure \ref{fig:overview}.
Pair (b) computed from (a) by Algorithm \ref{Alg:refine},
pair (c) computed from (b) by Algorithm \ref{Alg:coarsen}.
Also illustrates stability properties \eqref{new:boundary:in:old:boundary}
and \eqref{I:new:boundary:in:old:boundary}, as well as the effect 
from equation \eqref{I:R:almost:id}.\label{fig:computational:example}}
\end{sidewaysfigure*}

\subsection*{Acknowledgements}

This partly is partly supported by an Australian Government Research Training
Program (RTP) Scholarship.
The TikZ code from \cite{Gaborit} was very helpful for generating the graphics in Figures
\ref{fig:anatomy}, \ref{fig:axiom:failure}, \ref{fig:Voronoi} and \ref{fig:computational:example}.

\bibliographystyle{plain}
\bibliography{bdry}

\end{document}